\numberwithin{equation}{section}
\theoremstyle{definition}
\newtheorem{thm}{Theorem}[section]
\newtheorem{cor}[thm]{Corollary}
\newtheorem{lem}[thm]{Lemma}
\newtheorem{exa}[thm]{Example}
\newtheorem{prop}[thm]{Proposition}
\newtheorem{defi}[thm]{Definition}
\newtheorem{rem}[thm]{Remark}
\newtheorem{note}[thm]{Notation}
\newtheorem*{shc}{Singular Hodge conjecture}
\newtheorem*{conj}{Conjecture A}
\newtheorem{notac}[thm]{Notations and Conventions}
\DeclareMathOperator{\red}{\mathrm{red}}
\DeclareMathOperator{\Ima}{\mathrm{Im}}
\DeclareMathOperator{\p3}{\mathbb{P}^3}
\DeclareMathOperator{\pr}{\mathrm{pr}}
\DeclareMathOperator{\go}{\mathrm{GO}}
\DeclareMathOperator{\Hom}{\mathrm{Hom}}
\DeclareMathOperator{\MT}{\mathrm{MT}}
\DeclareMathOperator{\GSp}{\mathrm{GSp}}
\newcommand{\mbb}[1]{\mathbb{#1}}
\newcommand{\mb}[1]{\mathbb{#1}}
\newcommand{\mc}[1]{\mathcal{#1}}
\newcommand{\mr}[1]{\mathrm{#1}}
\newcommand{\ov}[1]{\overline{#1}}
\newcommand{\wt}[1]{\widetilde{#1}}
\newcommand{\mf}[1]{\mathfrak{#1}}
\newcommand{\mbf}[1]{\mathbf{#1}}
\newcommand{\un}[1]{\underline{#1}}
\begin{document}

\title{Mumford Tate groups and the Hodge conjecture}

\author[A. Dan]{Ananyo Dan}

\thanks{A.D. is funded by EPSRC grant number  EP/T019379/1. I. K. was funded by the DFG, TRR $326$
Geometry and Arithmetic of Uniformized Structures, project number $444845124$ and is currently funded by 
EPSRC grant number EP/W026554/1}

\address{School of Mathematics and Statistics, University of Sheffield, Hicks building, Hounsfield Road, S3 7RH, UK}

\email{a.dan@sheffield.ac.uk}

\author[I. Kaur]{Inder Kaur}

\address{Department of Mathematical Sciences, Loughborough University, LE11 3TU, U.K}

\email{i.kaur@lboro.ac.uk}

\subjclass[2010]{$14$C$15$, $14$C$30$, $32$S$35$, $32$G$20$, $14$D$07$, $14$C$05$}

\keywords{Hodge conjecture,  Limit mixed Hodge
structures, Operational Chow group, Cycle class map, flag 
Hilbert schemes, singular varieties}

\date{\today}

\begin{abstract}
 In this article we study the (cohomological) Hodge conjecture for \emph{singular} varieties. We prove the 
 conjecture for simple normal crossing varieties that can be embedded in a family where the 
 Mumford-Tate group remains constant. We show how to produce such families. Furthermore, we show for 
 varieties with worse singularities the conjecture can be expressed solely in terms of the 
 algebraic classes.
\end{abstract}

\maketitle

\section{Introduction}
The underlying field will always be $\mbb{C}$.
Recall, the classical \emph{Hodge conjecture} claims that 
given a smooth projective variety $X$, every
(rational) Hodge class in $X$ is the cohomology class of an 
algebraic cycle in $X$. 
The conjecture is known in some cases (see \cite{voiconj, lewis} for a survey of known results and \cite{voihod, ink} for related
results), but is open in general.
A typical strategy has been to consider smooth, projective low dimensional varieties 
that are birational to already known cases. This is primarily because 
the exceptional divisors arising from the resolution of the indeterminacy locus satisfy the Hodge conjecture.
However, this strategy fails in higher dimension. Another approach is to consider families of 
varieties (e.g. in the case of abelian varieties) and then use a Noether-Lefschetz-type argument 
to conclude that the Hodge classes in a very general fiber in the family are powers of the first 
Chern class of a line bundle. This implies the Hodge conjecture for a very general fiber. 
In this article, we combine ideas from both these approaches.

 It is well-known that any smooth projective variety $X$ is birational to a hypersurface $X_{\mr{hyp}}$ in a projective space.
 This hypersurface $X_{\mr{hyp}}$ is almost always singular.
 Note that there is homological version of the Hodge conjecture for singular varieties given by Jannsen \cite[Conjecture $7.2$]{jann} (see also \cite{lewsing1}). 
 He proved that the classical Hodge conjecture is equivalent to the singular version 
 (see \cite[Theorem $7.9$]{jann}, see also \cite{lewsing2}).
 Therefore, proving the singular Hodge conjecture for $X_{\mr{hyp}}$ would imply the Hodge conjecture for $X$. 

In the present article, we give a cohomological formulation of the Hodge conjecture for singular varieties. There are obvious reasons why this interpretation has so far been unexplored. Firstly for $X$ singular, the classical Chow group is not compatible with pull-back morphisms. In \cite[Chapter $17$]{fult} (see also \cite[Proposition $4$]{soug}), Fulton and MacPherson developed the \emph{operational Chow group}, denoted $A^p(X)$ which is compatible with pull-back morphisms and for smooth varieties coincides with the classical Chow group. However, even for the operational Chow group, we know by \cite{totchow} that in general, there is no map $A^p(X) \to H^{2p}(X,\mb{Q})$ with good properties. Nevertheless, by the work of Bloch-Gillet-Soul\'{e} (see \cite{soub}) there is a (functorial) cycle class map:
\[\mr{cl}_p: A^p(X) \otimes \mathbb{Q} \to \mr{Gr}^W_{2p} H^{2p}(X,\mb{Q}).\]
Using this we formulate the cohomological singular Hodge conjecture as follows:

\begin{shc}
 Let $X$ be a projective variety such that the dimension of the singular locus is  at most $p-1$.
 Then, the image of the cycle class map
$\mr{cl}_p$ coincides with 
\[H^{2p}_{\mr{Hdg}}(X):= \mr{Gr}^W_{2p} H^{2p}(X,\mb{Q})
\cap F^p \mr{Gr}^W_{2p} H^{2p}(X,\mb{C}).\]  
\end{shc}

If $X$ is of dimension $n$ and the above conjecture holds for $X$, then we say that $X$ \emph{satisfies} $\mr{SHC}(p,n)$.
Of course, if $X$ is non-singular then the singular  
Hodge conjecture is the same as the classical Hodge conjecture. 
In this case, we say that $X$ \emph{satisfies} $\mr{HC}(p,n)$. 
 The Lefschetz $(1,1)$-theorem implies $\mr{HC}(1,n)$ holds true, for any $n$. 


Recall, a very general hypersurface of any dimension satisfies the Hodge conjecture (as the cohomology ring
is generated by the class of the hyperplane section).
 Therefore we can \emph{always} embed $X_{\mr{hyp}}$ in a one parameter family of hypersurfaces such that 
 a general fibre satisfies the Hodge conjecture. One then expects that the Hodge classes on $X_{\mr{hyp}}$ 
 ``spread out'' to Hodge classes in the family. Since a general member of the family satisfies the Hodge conjecture, we know that the Hodge class away from the centre is the cohomology class of an algebraic cycle. By the simple operation of taking closure, one can then extend the 
algebraic cycles on the general fiber to the central fiber. One needs to check that the cohomology class of this ``new'' algebraic cycle on the central fiber coincides with the Hodge class we started with. However, 
there are several technical problems. Heuristically, the specialization map is not injective and hence Hodge classes need not ``spread out''. Even if a Hodge class does spread out, it might not restrict to a Hodge class on the general fibre!
  In this article we study these problems and give several examples of families of  varieties where these problems can be circumvented. Let us make this precise. 

Let $X$ be a singular, projective variety of dimension $n$ and $\pi: \mc{X} \to \Delta$ be 
a flat family of projective varieties, smooth over $\Delta^*$  with the central fiber $X$. 
Fix an integer $p$. Denote by $\mf{h}$ the  universal cover for $\Delta^*$ and by $\mc{X}_\infty$ the 
pull-back of $\mc{X}$ to $\mf{h}$. By Ehresmann's theorem, for every $u \in \mf{h}$ there is an isomorphism of 
cohomology groups $H^{2p}(\mc{X}_\infty, \mb{Q})$ and $H^{2p}(\mc{X}_u,\mb{Q})$. The natural Hodge filtration on 
$H^{2p}(\mc{X}_u,\mb{Q})$ induces a filtration $F^p_u$ on $H^{2p}(\mc{X}_\infty,\mb{Q})$. The \emph{limit Hodge filtration} on 
$H^{2p}(\mc{X}_\infty,\mb{Q})$ arises as the limit of this filtration as the imaginary part of $u$ tends to $\infty$ (see \S \ref{sec:lhf} for details). However, there may be rational points  
$H^{2p}(\mc{X}_\infty,\mb{Q}) \cap F^pH^{2p}(\mc{X}_\infty, \mb{C})$ of the limit Hodge filtration
that \emph{do not} come from the rational points of the filtration $F^p_u$. The Noether-Lefschetz locus gives examples of this phenomena even for smooth families (see Example \ref{nonmtf}). 
As a result, $H^{2p}(\mc{X}_\infty,\mb{Q})$ may contain more Hodge classes than that on a general fiber!
This means that although a Hodge class on $\mc{X}_0$ maps to a Hodge class on $\mc{X}_\infty$ via the specialization map,
it need not extend to a Hodge class on the family.

The jump in the rank of the Hodge lattice is captured by Mumford-Tate groups (see \S \ref{sec:mtg} for the definition). 
We call $\pi$ a \emph{Mumford-Tate family} if the rank of the Mumford-Tate group remains ``constant in the limit'' (see \S \ref{sec:mtf} for precise definitions). Moreover, we call a singular, projective variety \emph{MT-smoothable} if it can be 
embedded as a central fiber of a Mumford-Tate family where the general fiber satisfies the Hodge conjecture.
We prove the following:


\begin{thm}\label{thmintro1}
 Let $X$ be a projective variety of dimension $4$ with strict normal crossings singularities. 
 If $X$ is MT-smoothable, then $X$ satisfies $\mr{SHC}(p,4)$ for every $p$.
\end{thm}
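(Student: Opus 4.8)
The plan is to exploit MT-smoothability to realise $X$ as the central fibre of a degeneration over which any Hodge class can be transported to a smooth fibre, resolved there by the Hodge conjecture, and then carried back by taking flat limits of algebraic cycles. Fix $p$ (in the range where $\mr{SHC}(p,4)$ is meaningful, i.e.\ $\dim\,\mr{Sing}(X)\le p-1$) and embed $X=\mc{X}_0$ in a Mumford--Tate family $\pi:\mc{X}\to\Delta$, smooth over $\Delta^*$, whose general fibre $\mc{X}_t$ satisfies $\mr{HC}(p,4)$. The image of $\mr{cl}_p$ is automatically contained in $H^{2p}_{\mr{Hdg}}(X)$, so it suffices to prove the reverse inclusion: every $\gamma\in H^{2p}_{\mr{Hdg}}(X)$, that is every class in $\mr{Gr}^W_{2p}H^{2p}(X,\mb{Q})$ lying in $F^p$, is $\mr{cl}_p([Z_0])$ for some codimension-$p$ cycle $Z_0$ on $X$.

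First I would pass to the limit mixed Hodge structure. For the normal crossing degeneration $\pi$, the Clemens--Schmid exact sequence furnishes a specialisation morphism $\mr{sp}:H^{2p}(X,\mb{Q})\to H^{2p}(\mc{X}_\infty,\mb{Q})$ of mixed Hodge structures, the target carrying the limit mixed Hodge structure of \S\ref{sec:lhf}. Since $X$ is projective, $\mr{Gr}^W_{2p}H^{2p}(X,\mb{Q})$ is the top weight piece, and strictness of morphisms of mixed Hodge structures shows that the induced map $\mr{Gr}^W_{2p}(\mr{sp})$ sends $\gamma$ to a monodromy-invariant class $\gamma_\infty\in\mr{Gr}^W_{2p}H^{2p}(\mc{X}_\infty,\mb{Q})$ lying in the limit Hodge filtration $F^pH^{2p}(\mc{X}_\infty,\mb{C})$; that is, $\gamma_\infty$ is a Hodge class for the limit mixed Hodge structure.

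The decisive step is to descend $\gamma_\infty$ to a genuine Hodge class on a general fibre. A priori the rational $(p,p)$-classes of the limit filtration can be strictly more numerous than those of $F^p_u$ for $u\in\mf{h}$ --- the Noether--Lefschetz phenomenon recalled in Example \ref{nonmtf} --- and the discrepancy is exactly what the rank of the Mumford--Tate group measures. Because $\pi$ is a Mumford--Tate family this rank is constant in the limit, so no spurious Hodge classes appear and $\gamma_\infty$ comes from a Hodge class $\gamma_t\in H^{2p}_{\mr{Hdg}}(\mc{X}_t)$. As $\mc{X}_t$ satisfies $\mr{HC}(p,4)$ we may write $\gamma_t=\mr{cl}(Z_t)$ for an algebraic cycle $Z_t$ of codimension $p$. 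Taking the scheme-theoretic closure of $Z_t$ in $\mc{X}$ (after a finite base change of $\Delta$, organising the family of cycles through the relevant flag Hilbert scheme) produces a cycle $\mc{Z}\subset\mc{X}$, flat over the disc, whose central fibre $Z_0$ is a codimension-$p$ cycle on $X$ defining a class in $A^p(X)\otimes\mb{Q}$.

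The hard part will be the final identity $\mr{cl}_p([Z_0])=\gamma$, where the subtleties of the operational cycle class map enter. I would first show that the Bloch--Gillet--Soul\'e class map is compatible with specialisation, so that $\mr{Gr}^W_{2p}(\mr{sp})\bigl(\mr{cl}_p([Z_0])\bigr)$ equals the limit of $\mr{cl}(Z_t)$, namely $\gamma_\infty=\mr{Gr}^W_{2p}(\mr{sp})(\gamma)$; it then remains to know that $\mr{Gr}^W_{2p}(\mr{sp})$ is \emph{injective}. This injectivity is the main obstacle: by the Clemens--Schmid sequence the kernel of $\mr{sp}$ is the image of the homology of $X$ under a Gysin-type map interacting with the monodromy $N$, and one must check that on the top weight piece of $H^{2p}(X)$ this image vanishes for weight reasons. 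It is precisely here that the hypothesis $\dim X=4$ and the strict normal crossing structure --- which pin down the weights occurring in $H^{2p}(X)$ and in the homology term --- are used. A secondary point is to verify that the flat limit $Z_0$ defines the expected operational class and does not pick up spurious components inside $\mr{Sing}(X)$, again controlled by the same weight and dimension constraints. Carrying out this argument in each admissible degree $p$ yields $\mr{SHC}(p,4)$ for every $p$.
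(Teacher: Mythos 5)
Your first five steps --- specializing $\gamma$ to a monodromy-invariant limit Hodge class $\gamma_\infty$, using the Mumford--Tate hypothesis to see $\gamma_\infty$ as a Hodge class on a general fibre (this is Proposition \ref{prop04}), realizing it by a cycle via $\mr{HC}(p,4)$, and spreading that cycle out over the disc through the flag Hilbert scheme (Lemma \ref{lem:hf}) --- reproduce the paper's own machinery. The proposal breaks down, however, at exactly the step you flag as the main obstacle: the map $\mr{Gr}^W_{2p}(\mr{sp})$ is \emph{not} injective, and no weight or dimension argument will make it so. Proposition \ref{prop01} of the paper computes its kernel precisely: it is the image of the Gysin morphism from $H^{2p-2}(X(2),\mb{Q})$ into $\mr{Gr}^W_{2p}H^{2p}(X,\mb{Q})$, and this image is nonzero as soon as $X$ is genuinely singular (already for a degeneration of elliptic curves into a cycle of two rational curves one has $\mr{Gr}^W_{2}H^{2}(X)\cong\mb{Q}^2$ while $\mr{Gr}^W_{2}H^{2}(\mc{X}_\infty)\cong\mb{Q}$; the same phenomenon persists for simple normal crossing fourfolds). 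Consequently, after you take the flat limit $Z_0$ of your cycles $Z_t$, the classes $\mr{cl}_p([Z_0])$ and $\gamma$ agree only modulo this Gysin image, and your argument cannot conclude.

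The paper's actual proof (Theorem \ref{th01}) closes this gap with an idea your proposal is missing: rather than proving injectivity, it shows that every \emph{Hodge} class in the kernel is itself algebraic. Since the kernel is a Gysin image, its Hodge classes come (by semisimplicity of polarizable pure Hodge structures) from Hodge classes of degree $2p-2$ on the smooth variety $X(2)$; when $n=4$ the irreducible components of $X(2)$ are smooth projective threefolds, where $\mr{HC}(p-1,3)$ is known (Lefschetz $(1,1)$ plus hard Lefschetz), so those classes are algebraic and so are their Gysin pushforwards. A four-lemma diagram chase comparing the exact row $H^{2p-2}_A(X(2)) \to H^{2p}_A(X) \to H^{2p}_A(\mc{X}_\infty) \to 0$ with its Hodge-theoretic counterpart $H^{2p-2}_{\mr{Hdg}}(X(2)) \to H^{2p}_{\mr{Hdg}}(X) \to H^{2p}_{\mr{Hdg}}(\mc{X}_\infty)^{\mr{inv}}$ then yields surjectivity of $H^{2p}_A(X) \to H^{2p}_{\mr{Hdg}}(X)$. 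This is where the dimension-$4$ hypothesis genuinely enters --- not through any vanishing of the kernel of the specialization map, but through knowing the Hodge conjecture one dimension down.
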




 In Theorem \ref{th01} below, we prove Theorem \ref{thmintro1} for any dimension. 
 Clearly Theorem \ref{thmintro1} leads to the following questions:
 
 \begin{itemize}
  \item {Question 1:} How to find Mumford-Tate families?
  \item {Question 2:} Can we generalize Theorem \ref{thmintro1} to varieties with worse singularities?
 \end{itemize}

For an exhaustive answer of Question $1$ one would need a complete description of the 
Noether-Lefschetz locus for families of hypersurfaces in all dimensions greater than $3$. This problem is largely open.
However in \S \ref{sec:exa}, 
 we give a general method to obtain Mumford-Tate families from known ones using the theory of correspondences. 
 Recall, that given a coherent sheaf $\mc{E}$ on a product of two smooth, projective varieties $X \times Y$, the $i$-th Chern class of $\mc{E}$ induces a morphism of pure Hodge structures 
 from $H^{2m-k}(X)$ to $H^{2i-k}(Y)$ for all integers $i$ and $k$, where $m=\dim(X)$ (see \S \ref{sec:cohgen}). 
 Let us denote such a morphism by $\Phi^{(i,k)}_{\mc{E}}$. 
 We say $Y$ is \emph{cohomologically generated by} $(X, \mc{E})$ if the cohomology ring $H^*(Y)$ is generated (as a ring) by the images of morphisms of the form $\Phi^{(i,k)}_{\mc{E}}$ as $i$ and $k$ varies over all integers (see Definition \ref{defi:coh}). Note that several examples of cohomologically generated varieties appear in existing literature. For example, in \cite{mumn} Mumford and Newstead proved that the moduli space of stable rank $2$ bundles with odd degree determinant over a curve $C$ is cohomologically generated by the pair $(C,\mc{U})$, where $\mc{U}$ is the universal bundle associated to the moduli space.
 In \cite{mark1, mark2} Markmann showed a similar result for moduli spaces of sheaves over certain surfaces. In \S 
 \ref{sec:exa} we show how this notion of cohomologically generated leads to producing more Mumford-Tate families.

\begin{thm}\label{introthm3}
    Let $\pi_1: \mc{X}^{*} \to \Delta^{*}$ and $\pi_2: \mc{Y}^{*}  \to \Delta^{*}$ be two smooth, projective families. Assume that there exists a coherent sheaf $\mc{U}$ over $\mc{X}^{*}\times_{\Delta^{*}} \mc{Y}^{*}$ such that it is flat over $\Delta^{*}$. Suppose that for general $t \in \Delta^*$,
    $\mc{Y}_t$ is cohomologically generated by $(\mc{X}_t, \mc{U}_t)$, where $\mc{U}_t:= \mc{U}|_{\mc{X}_t \times \mc{Y}_t}$. If the family $\pi_1$ is (strictly) Mumford-Tate family, then so is the family $\pi_2$. 
 \end{thm}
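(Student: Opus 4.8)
The plan is to show that cohomological generation forces every Hodge-theoretic invariant of $\mc{Y}^*$ to be governed by the corresponding invariant of $\mc{X}^*$, both at a general fibre and in the limit, and then to transport the Mumford--Tate condition across the resulting commutative square of Mumford--Tate groups. Throughout, $H^*(\mc{X}_t)=\bigoplus_j H^j(\mc{X}_t)$ is viewed as a direct sum of pure Hodge structures, and similarly for $\mc{Y}_t$.

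First I would fix a general $t\in\Delta^*$ and construct a surjection $\MT(H^*(\mc{X}_t))\twoheadrightarrow\MT(H^*(\mc{Y}_t))$. By the K\"unneth decomposition each Chern class $c_i(\mc{U}_t)\in H^{2i}(\mc{X}_t\times\mc{Y}_t)$ is a Hodge class of a tensor construction of $H^*(\mc{X}_t)\oplus H^*(\mc{Y}_t)$, so every correspondence $\Phi^{(i,k)}_{\mc{U}_t}$ is a morphism of pure Hodge structures whose class is a Hodge tensor fixed by $G_t:=\MT(H^*(\mc{X}_t)\oplus H^*(\mc{Y}_t))$. The first projection $\mathrm{pr}_{\mc{X}}\colon G_t\to\GL(H^*(\mc{X}_t))$ has image $\MT(H^*(\mc{X}_t))$, and I claim it is injective: if $g=(g_{\mc{X}},g_{\mc{Y}})\in G_t$ has $g_{\mc{X}}=\Id$, then the similitude character of $g$ is trivial (it is detected on the Tate sub-Hodge structure of $H^2(\mc{X}_t)$ spanned by the hyperplane class), so $g_{\mc{Y}}$ fixes each $\Ima\Phi^{(i,k)}_{\mc{U}_t}$ pointwise; since $G_t$ acts by ring automorphisms (cup product is a morphism of Hodge structures) and these images generate $H^*(\mc{Y}_t)$ as a ring, $g_{\mc{Y}}=\Id$. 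Thus $\mathrm{pr}_{\mc{X}}$ is an isomorphism, and composing $\mathrm{pr}_{\mc{X}}^{-1}$ with the surjective second projection $\mathrm{pr}_{\mc{Y}}\colon G_t\twoheadrightarrow\MT(H^*(\mc{Y}_t))$ yields the desired surjection $q_t$.

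Next I would globalize and pass to the limit. Because $\mc{U}$ is flat over $\Delta^*$, the classes $c_i(\mc{U})$ are flat global sections defining morphisms of the ambient variation of Hodge structure, and, remaining of Hodge type in the limit, they are Hodge tensors of the limit mixed Hodge structure on $H^*(\mc{X}_\infty)\oplus H^*(\mc{Y}_\infty)$. Moreover the underlying graded ring $H^*(\mc{Y}_\infty)$ and the maps induced by $c_i(\mc{U})$ are constant across the family by Ehresmann, so the ring-generation statement, being purely topological, persists at the limit. Hence the argument of the previous paragraph applies verbatim to the limit mixed Hodge structures, producing a surjection $q_{\lim}\colon\MT_{\lim}(\mc{X})\twoheadrightarrow\MT_{\lim}(\mc{Y})$; write $q_{\mathrm{gen}}$ for the analogous surjection at the generic point. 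As all these homomorphisms are restrictions of the single $\GL$-level action of the fixed correspondence on the constant cohomology of a reference fibre, they are compatible with the specialization inclusions $\iota_{\mc{X}}\colon\MT_{\lim}(\mc{X})\hookrightarrow\MT_{\mathrm{gen}}(\mc{X})$ and $\iota_{\mc{Y}}\colon\MT_{\lim}(\mc{Y})\hookrightarrow\MT_{\mathrm{gen}}(\mc{Y})$ (the limit mixed Hodge structure carries all generic Hodge tensors), giving a commutative square $q_{\mathrm{gen}}\circ\iota_{\mc{X}}=\iota_{\mc{Y}}\circ q_{\lim}$.

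Finally I would invoke the hypothesis on $\pi_1$. In the strict case $\iota_{\mc{X}}$ is an isomorphism, so $\iota_{\mc{Y}}\circ q_{\lim}=q_{\mathrm{gen}}\circ\iota_{\mc{X}}$ is surjective, forcing the injection $\iota_{\mc{Y}}$ to be onto, hence an isomorphism, so $\pi_2$ is strictly Mumford--Tate. For the rank formulation, let $T\subseteq\MT_{\lim}(\mc{X})$ be a maximal torus; since $\rk\MT_{\lim}(\mc{X})=\rk\MT_{\mathrm{gen}}(\mc{X})$, the torus $\iota_{\mc{X}}(T)$ is maximal in $\MT_{\mathrm{gen}}(\mc{X})$, so $q_{\mathrm{gen}}(\iota_{\mc{X}}(T))$ is a maximal torus of $\MT_{\mathrm{gen}}(\mc{Y})$ (a surjection of connected linear algebraic groups carries maximal tori to maximal tori). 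By commutativity this equals $\iota_{\mc{Y}}(q_{\lim}(T))$, where $q_{\lim}(T)$ is a maximal torus of $\MT_{\lim}(\mc{Y})$ and $\iota_{\mc{Y}}$ preserves dimension; comparing dimensions gives $\rk\MT_{\mathrm{gen}}(\mc{Y})=\rk\MT_{\lim}(\mc{Y})$, i.e. $\pi_2$ is Mumford--Tate. I expect the main obstacle to be the degenerate-fibre bookkeeping in the second step: verifying that the correspondence is genuinely a morphism of the \emph{limit} mixed Hodge structures and that the specialization square commutes, together with the non-reductivity of Mumford--Tate groups of mixed Hodge structures (handled by passing to maximal tori, respectively identity components, which is legitimate since in the pure polarizable case these groups are connected reductive).
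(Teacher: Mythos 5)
Your proposal takes a genuinely different, group-theoretic route (surjections of Mumford--Tate groups, maximal tori, ranks) from the paper's proof of Theorem~\ref{thm:mt}, which never compares Mumford--Tate groups at all: the paper first proves (Theorem~\ref{thm:mix}) that the correspondences $\Phi^{(i,k)}_{\mc{U},\infty}$ are morphisms of \emph{limit} mixed Hodge structures commuting with restriction to fibers, assembles them with cup product into a morphism of mixed Hodge structures $\Phi\colon \mb{T}_{\mc{X}} \to H^*(\mc{Y}_\infty,\mb{Q})$, lifts a given limit Hodge class $\gamma$ on $\mc{Y}_\infty$ to a Hodge class $\gamma'\in\mb{T}_{\mc{X}}$ using cohomological generation, applies the strictness hypothesis to $j_s^*(\gamma')$, and pushes forward with $\Phi_s$. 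Measured against that statement, your plan has a genuine gap at its final and decisive step: the paper's definition of ``Mumford--Tate family'' (\S\ref{sec:mtf}) is not a rank condition. It says that every $\gamma \in F^pH^{2p}(\mc{Y}_\infty,\mb{C})\cap H^{2p}(\mc{Y}_\infty,\mb{Q})$ with $N\gamma=0$ restricts to a class \emph{fixed} by $\MT_p(\mc{Y}_{e(s)})$ for general $s$ --- a statement about invariant vectors. Your last paragraph only establishes $\rk \MT_{\mr{lim}}(\mc{Y})=\rk\MT_{\mr{gen}}(\mc{Y})$, taking the introduction's informal gloss (``rank constant in the limit'') as the definition. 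Equality of ranks along an inclusion of algebraic groups does not force equality of invariants: a maximal torus has the same rank as the ambient group but strictly more invariants (e.g.\ the diagonal torus in $\mr{SL}_2$ acting on $\mr{Sym}^2$ of the standard representation). So rank equality for $\mc{Y}$ does not yield the restriction property that defines a Mumford--Tate family, and the conclusion of the theorem is never reached. Your ``strict case'' chain, which does give equality of groups and hence of invariants, would suffice --- but it consumes the hypothesis ``$\iota_{\mc{X}}$ is an isomorphism of the full Mumford--Tate groups of $H^*(\mc{X}_t)$'', which is strictly stronger than what Theorem~\ref{thm:mt} assumes, namely strictness only with respect to the tuples $(p(S_{\mr{even}}),p(S_{\mr{odd}}))$ coming from the generation data, i.e.\ control of Hodge classes only in those particular tensor constructions.

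Second, everything you build on the limit side --- the group $\MT_{\mr{lim}}$ of a limit mixed Hodge structure, the dictionary ``limit Hodge classes $=$ invariants of $\MT_{\mr{lim}}$'', the injectivity of $\pr_{\mc{X}}$ at the limit, the specialization inclusions $\iota$, and the commutativity of your square --- rests on precisely the statement you flag as ``the main obstacle'' and defer: that the flat classes $c_i(\mc{U})$ induce morphisms of the limit mixed Hodge structures compatible with the fiber restrictions $j_s^*$. That is exactly Theorem~\ref{thm:mix}, whose proof (monodromy-invariance of $c_i(\mc{U})$ forcing $\Phi^{(i,k)}_{\mc{U}_t}\circ N_{\mc{X}}=N_{\mc{Y}}\circ\Phi^{(i,k)}_{\mc{U}_t}$, hence compatibility with the weight filtration) is the real technical content of this section of the paper, not bookkeeping; without it your square does not exist. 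On top of this sit the issues you mention only in passing: Mumford--Tate groups of mixed Hodge structures are non-reductive, and the injectivity argument for $\pr_{\mc{X}}$ requires careful handling of the similitude character (the correspondences and the hyperplane class are Hodge classes of nonzero weight, so ``fixed'' only makes sense after a Tate-twist normalization). In short: the first two steps of your plan are a plausible (and heavier) alternative scaffolding, but as written the proof both assumes more than the theorem's hypothesis in the strict case and proves less than the theorem's conclusion in the general case.
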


 See Theorem \ref{thm:mt} for the precise formulation.
An obvious choice for $\pi_1$ is a family of smooth curves degenerating to a singular curve (with arbitrary
singularities). See Proposition \ref{prop:cur} for a proof in the case when the singular curve is nodal.

Let us turn to Question $2$.  Suppose $X$ is a singular projective variety of dimension $n$ 
and $p$ be an integer such that $\dim(X_{\mr{sing}}) \le p-1$. Suppose $\phi: \wt{X} \to X$ is any resolution of 
 singularities and $E$ is the exceptional divisor. 
 By \cite[Corollary-Definition $5.37$]{pet}, we have an exact sequence on cohomology
 \[ H^{2p}(X) \to H^{2p}(\wt{X}) \to H^{2p}(E).\]
  We conjecture that taking algebraic cohomology groups preserves the exactness of the sequence:

  \begin{conj}\label{introconj}
   The following sequence is exact:
 \[ H^{2p}_A(X) \to  H^{2p}_A(\wt{X}) \to H^{2p}_A(E).\]
\end{conj}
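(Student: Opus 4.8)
The plan is to derive the conjecture from the exact sequence above together with the functoriality of the Bloch--Gillet--Soul\'e cycle class map $\mr{cl}_p$. The three algebraic cohomology groups are, by definition, the images of the cycle class maps inside ordinary cohomology, and the two arrows in the conjectural sequence are the restrictions of $\phi^*$ and $i_E^*$ to these images; hence the composite $H^{2p}_A(X)\to H^{2p}_A(\wt X)\to H^{2p}_A(E)$ vanishes simply because the composite in the exact sequence above does. Thus only exactness at the middle term is at issue. Here the hypothesis $\dim X_{\mr{sing}}\le p-1$ does more work: since $H^{2p}(\wt X)$ is pure of weight $2p$ while $\phi^*$ is a morphism of mixed Hodge structures, $\phi^*$ annihilates $W_{2p-1}H^{2p}(X)$; and the kernel of $\phi^*$, being the image of the connecting map out of $H^{2p-1}(E)$, also has weight $<2p$. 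A weight count therefore upgrades the exact sequence to an isomorphism $\phi^*\colon \mr{Gr}^W_{2p}H^{2p}(X)\xrightarrow{\ \sim\ }\ker\big(i_E^*\colon H^{2p}(\wt X)\to H^{2p}(E)\big)$.

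Under this isomorphism the conjecture becomes the assertion that the algebraic classes correspond: one inclusion, $\phi^*\!\big(H^{2p}_A(X)\big)\subseteq \ker(i_E^*)\cap H^{2p}_A(\wt X)$, is immediate from functoriality and the vanishing of the composite. For the reverse inclusion I would start from a class $\alpha\in H^{2p}_A(\wt X)$ with $i_E^*\alpha=0$, represent it as $\alpha=\mr{cl}(W)$ for a codimension-$p$ cycle $W$ on $\wt X$, and pass to the smooth locus. Writing $U=X\smallsetminus X_{\mr{sing}}$ and identifying $U$ with $\wt X\smallsetminus E$ via the birational morphism $\phi$, the restriction $\alpha|_U$ is the class of the honest codimension-$p$ cycle $V|_U$, where $V:=\phi_*W\in A_{n-p}(X)$ is the proper pushforward. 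The task is then to manufacture an \emph{operational} class $c\in A^p(X)$ restricting to $[V|_U]\in A^p(U)=\mr{CH}^p(U)$ and to check that $\mr{cl}_p(c)=\beta$, where $\beta\in\mr{Gr}^W_{2p}H^{2p}(X)$ is the unique class with $\phi^*\beta=\alpha$.

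I expect the construction of the operational class $c$ to be the main obstacle, and to be the reason the statement remains conjectural. For singular $X$ there is no Poincar\'e-duality isomorphism $A_{n-p}(X)\xrightarrow{\ \sim\ }A^p(X)$, so the covariant class $[V]$ produced by $\phi_*$ does not automatically refine to a contravariant operational class; equivalently, the non-flat, non-lci resolution $\phi$ carries no canonical pushforward of operational classes. This is exactly where the argument must be genuinely geometric rather than formal. The dimension hypothesis should again be the key: since $V$ meets $X_{\mr{sing}}$ in dimension strictly below $\dim V=n-p$, the class $[V|_U]\in\mr{CH}^p(U)$ is already defined, and the obstruction to extending the associated operational class across $X_{\mr{sing}}$ is controlled by cohomology of $X_{\mr{sing}}$ in degrees that $\dim X_{\mr{sing}}\le p-1$ forces to vanish; making such an extension canonical, and verifying its compatibility with $\mr{cl}_p$ and with the isomorphism of the first paragraph, would finish the proof. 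A possible alternative is to factor $\phi$ into a sequence of blow-ups along smooth centres and argue by induction, using the blow-up and projective-bundle formulas for operational Chow groups to reduce the descent of an algebraic class to the already-known smooth situation; the difficulty there is that the first centre is in general singular, so the induction must be set up to control the algebraic classes created on each exceptional divisor.
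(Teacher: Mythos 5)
You should first be clear about the status of this statement in the paper: it is Conjecture A, and the paper never proves it unconditionally. The only thing the paper establishes about it is Theorem \ref{th04}: if $X$ satisfies $\mr{SHC}(p,n)$ then Conjecture A holds for $X$, and conversely Conjecture A together with MT-smoothability and $\mr{HC}(p-1,n-1)$ gives back $\mr{SHC}(p,n)$. Your proposal is likewise not a proof, and you say so explicitly; judged as an analysis it is sound. Your first paragraph --- the reduction to exactness at the middle term, and the strictness/purity argument giving an isomorphism $\phi^*\colon \mr{Gr}^W_{2p}H^{2p}(X)\xrightarrow{\sim}\ker\bigl(i_E^*\colon H^{2p}(\wt{X})\to H^{2p}(E)\bigr)$ --- is correct (it uses $\dim X_{\mr{sing}}\le p-1$ to kill $H^{2p-1}(X_{\mr{sing}})$ and $H^{2p}(X_{\mr{sing}})$, and strictness of morphisms of mixed Hodge structures), and it is exactly the Hodge-theoretic input the paper uses: it amounts to the exactness, and left injectivity, of the bottom row of the diagram \eqref{eq:diag01} in the proof of Theorem \ref{th04}.

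The divergence is in how the missing surjectivity is supplied, and here the two routes are genuinely different. The paper \emph{assumes} $\mr{SHC}(p,n)$, i.e.\ that $\mr{cl}_p\colon A^p(X)\otimes\mb{Q}\to H^{2p}_{\mr{Hdg}}(X)$ is surjective; a diagram chase then lifts a class $\gamma\in H^{2p}_A(\wt{X})$ killed by restriction to $E$ first to a Hodge class on $X$ (exactness of the Hodge row) and then, by the assumed surjectivity, to an algebraic class on $X$. You instead try to manufacture the preimage geometrically: push the cycle forward, restrict to the smooth locus $U=X\setminus X_{\mr{sing}}$, and extend the resulting class in $\mr{CH}^p(U)=A^p(U)$ to an operational class in $A^p(X)$ compatible with $\mr{cl}_p$. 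Your diagnosis that this extension is the true obstruction --- there is no Poincar\'e duality $A_{n-p}(X)\cong A^p(X)$ and no canonical pushforward of operational classes along the non-flat, non-lci resolution $\phi$ --- is accurate, and it is consistent with the paper's own logic: by the converse direction of Theorem \ref{th04}, an unconditional proof of Conjecture A along your lines would immediately yield $\mr{SHC}(p,n)$ for every MT-smoothable $X$ (granting $\mr{HC}(p-1,n-1)$). So the step you flag cannot be formal; in this setting it carries the full weight of the singular Hodge conjecture, which is precisely why the statement is left as a conjecture rather than a theorem.
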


Note that, this conjecture does not involve Hodge classes. Surprisingly, we prove that 
if $X$ is MT-smoothable, then this conjecture is equivalent to the singular Hodge conjecture. In particular, 

\begin{thm}\label{introthm2}
Let $X$ be as above.
If $X$ satisfies $\mr{SHC}(p,n)$, then $X$ satisfies Conjecture A. 
Conversely, if $\mr{HC}(p-1,n-1)$ holds true, 
$X$ is MT-smoothable  and satisfies Conjecture A, then $X$ 
satisfies $\mr{SHC}(p,n)$.
\end{thm}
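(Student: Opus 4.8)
The plan is to prove Theorem \ref{introthm2} by analyzing the exact sequence
\[ H^{2p}(X) \xrightarrow{\phi^*} H^{2p}(\wt{X}) \xrightarrow{\rho} H^{2p}(E) \]
and its algebraic subspaces, using the MT-smoothability to control the Hodge-theoretic data. I want to sketch how I would organize this.

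First, for the forward implication ($\mr{SHC}(p,n) \Rightarrow$ Conjecture A), I would argue functoriality of the cycle class map. An element of $H^{2p}_A(\wt{X})$ mapping to $0$ in $H^{2p}_A(E)$ is in particular a Hodge class (algebraic classes are Hodge) lying in $\ker(\rho)$. By the exactness of the underlying cohomology sequence from \cite[Corollary-Definition 5.37]{pet}, it lifts to a class $\beta \in H^{2p}(X)$, and one checks this lift can be taken to be a Hodge class in $\mr{Gr}^W_{2p}H^{2p}(X)$ (the weight-$2p$ graded piece is where the cycle map lands). Here I would invoke $\mr{SHC}(p,n)$ to realize $\beta$ as $\mr{cl}_p(\alpha)$ for $\alpha \in A^p(X)\otimes\mb{Q}$; then $\phi^*\alpha$ is an algebraic class on $\wt{X}$ whose cycle class is the element we started with, giving exactness of the algebraic sequence.

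The converse is the substantive direction and where the hypotheses earn their keep. Given a Hodge class $\gamma \in H^{2p}_{\mr{Hdg}}(X)$, I would push it into $H^{2p}(\wt{X})$ via $\phi^*$ and show its image is a Hodge class killed by $\rho$. The exceptional divisor $E$ has dimension $\le n-1$ and its relevant cohomology is controlled by $\mr{HC}(p-1,n-1)$ (via the Gysin/blow-up formula relating $H^{2p}(E)$ to lower-codimension classes on the components of $E$), so that the image of the algebraic classes on $\wt X$ in $H^{2p}(E)$ is exactly the Hodge part. The key input is that $\wt{X}$, being smooth and (after the MT-smoothing) fitting into a family where the Mumford-Tate group is constant in the limit, satisfies the ordinary Hodge conjecture for the class $\phi^*\gamma$: this is precisely the mechanism of Theorem \ref{th01}/Theorem \ref{thmintro1}, namely that MT-smoothability forces limit Hodge classes to spread out and be algebraic. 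Thus $\phi^*\gamma = \mr{cl}_p(\tilde\alpha)$ for an algebraic class $\tilde\alpha$ on $\wt X$, and its restriction $\rho(\phi^*\gamma)=0$ means $\tilde\alpha$ maps into $H^{2p}_A(E)$ as a boundary. Applying Conjecture A, $\tilde\alpha$ (modulo something in the image) lies in the image of $H^{2p}_A(X)$, and tracing the cycle class back down realizes $\gamma$ in the image of $\mr{cl}_p$.

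The main obstacle I anticipate is the compatibility of the cycle class maps across the three spaces $X$, $\wt{X}$, $E$ at the level of the weight filtration: the Bloch--Gillet--Soulé map lands in $\mr{Gr}^W_{2p}$, and one must verify that the exact sequence of \cite{pet} interacts correctly with these graded pieces and with the Gysin maps, so that "algebraic and killed by $\rho$" genuinely corresponds to "lifts to an algebraic class on $X$." A related delicate point is ensuring the lift $\beta$ of a Hodge class is itself Hodge rather than merely rational --- this requires that $\phi^*$ be a morphism of mixed Hodge structures strict with respect to $F^\bullet$, which holds but must be checked against the weight-$2p$ truncation. Once these functoriality and strictness statements are in place, both implications reduce to diagram-chasing the algebraic and Hodge-theoretic versions of the same exact sequence, with MT-smoothability and $\mr{HC}(p-1,n-1)$ supplying algebraicity on $\wt X$ and on $E$ respectively.
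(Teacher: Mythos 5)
Your forward implication is sound and matches the paper's own argument (the first half of Theorem \ref{th04}): it is the same diagram chase comparing the algebraic and Hodge-theoretic versions of the sequence $H^{2p}(X) \to H^{2p}(\wt{X}) \to H^{2p}(E)$, with $\mr{SHC}(p,n)$ supplying the isomorphism in the left column and the injectivity/exactness of the Hodge row doing the rest.

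The converse, however, has a genuine gap at its central step. You assert that $\phi^*\gamma$ is algebraic on $\wt{X}$ because $\wt{X}$, ``fitting into a family where the Mumford-Tate group is constant in the limit, satisfies the ordinary Hodge conjecture for the class $\phi^*\gamma$,'' attributing this to the mechanism of Theorem \ref{th01}. But $\wt{X}$ is not a fiber of any Mumford-Tate family: the MT-smoothing $\pi_0\colon \mc{Y} \to \Delta$ has the \emph{singular} variety $X$ as its central fiber, and after semi-stable reduction (a step your proposal never performs) the new central fiber is an SNC variety $\mc{X}_0 = \wt{X} \cup \wt{X}^c$, of which $\wt{X}$ is only one irreducible component. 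Theorem \ref{th01} yields $\mr{SHC}(p,n)$ for the \emph{whole} SNC fiber $\mc{X}_0$, not $\mr{HC}(p,n)$ for the component $\wt{X}$; for a single component the paper obtains only the weaker statement that Hodge classes \emph{restricted from} $\mc{X}_0$ are algebraic, and upgrading this to the Hodge conjecture for a component genuinely requires the extra hypothesis \eqref{eq:cond} of Corollary \ref{cor02}. So, as written, your key algebraicity claim is unsupported. The gap is repairable, and the repair is precisely the step you skipped: pull $\gamma$ back along the morphism $\mc{X}_0 \to X$ (not merely along $\phi$) to get a Hodge class $\gamma_0$ on $\mc{X}_0$ whose restriction to $\wt{X}$ is $\phi^*\gamma$; apply Theorem \ref{th01} to $\mc{X}_0$ --- this is also where $\mr{HC}(p-1,n-1)$ actually enters, namely for the double intersections $\mc{X}_0(2)$ to kill the kernel of the specialization map via Proposition \ref{prop01}, not in the way you describe for $E$ --- to conclude $\gamma_0$ is algebraic; then restrict to $\wt{X}$. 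After that, your endgame (Conjecture A plus injectivity of $\phi^*$ on $\mr{Gr}^W_{2p}H^{2p}(X)$, which holds because $\dim X_{\mr{sing}} \le p-1$ forces the kernel of $\phi^*$ to have weight at most $2p-1$) does close the argument, and would then be a mild variant of the paper's route, which instead runs a Mayer-Vietoris/snake-lemma comparison on $\mc{X}_0 = \wt{X} \cup \wt{X}^c$.
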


See Theorem \ref{th04} for the precise statement.

 $\bf{Outline}$: The paper is organised as follows: in \S \ref{sec:prelim}
 we briefly recall the necessary preliminaries on limit mixed Hodge structures and flag Hilbert schemes. 
 In \S \ref{sec:mt} we recall the definition of a Mumford-Tate group and introduce Mumford-Tate families. 
 We give both examples and non-examples of such families. In \S \ref{sec:shc}, we define limit algebraic cohomology 
 groups and limit Hodge classes. We recall the preliminaries on Operational Chow group and the Bloch-Gillet-Soul\'{e} cycle class map. We give the singular Hodge conjecture and prove some of the preliminary results which we use later. In \S \ref{sec:main}, we prove the main results of this article. Finally, in \S \ref{sec:exa} we give a method to produce Mumford-Tate families.

%

 \section{Preliminaries}\label{sec:prelim}
 In this section we briefly recall some of the basics on limit mixed Hodge  structures and 
 flag Hilbert schemes. 
 Limit mixed Hodge structures play an important role throughout this article. 
 See \cite[\S $11$]{pet} for a detailed treatment  of the topic.
 
  \subsection{Setup}\label{se01}
Consider a flat family of projective varieties,
 \[\pi:\mc{X} \to \Delta,\]  smooth over $\Delta^*$ of relative dimension $n$. Suppose 
  the central fiber $\mc{X}_0:= \pi^{-1}(0)$ 
 is a reduced, simple normal crossings divisor.
 Denote by $ \pi':\mc{X}_{\Delta^*} \to \Delta^*$
 the restriction of $\pi$ to the punctured disc $\Delta^*$.
  Denote by $X_1,...,X_r$
 the irreducible components of the central fiber $\mc{X}_0$.
For $m \ge 2$, denote by $X(m)$ the disjoint union of 
the intersections of $m$ number of irreducible components of 
$\mc{X}_0$ i.e., 
\[X(m) := \coprod_{\substack{|I|=m\\ I=(1 \le i_1<i_2<...<i_m \le r)}}
 \left(\bigcap\limits_{k=1}^m X_{i_k}\right).\]

 Let $e: \mf{h} \to \Delta^*$ be the exponential map 
from the upper half plane $\mf{h}$ to the punctured disc 
$\Delta^*$. Denote by $\mc{X}_\infty:=\mc{X}_{\Delta^*} \times_{\Delta^*} \mf{h}$ the base change of $\mc{X}_{\Delta^*}$
to $\mf{h}$ via the exponential map $e$.

\subsection{Monodromy operator}\label{sec:mon}
Since $\mf{h}$ is simply connected, 
the natural inclusion 
\[i_s: \mc{X}_{e(s)} \hookrightarrow \mc{X}_\infty\]  
for any $s \in \mf{h}$, induces an isomorphism of 
cohomology groups:
\[i_s^*: H^{2p}(\mc{X}_\infty, \mb{Z}) \xrightarrow{\sim}
 H^{2p}(\mc{X}_{e(s)}, \mb{Z}).\]
 Note that, the morphism $i_s^*$ changes even if $e(s)$ does not. In particular, we have the 
 \emph{monodromy operator} associate to the family $\pi$ given by the composition:
 \[T: H^{2p}(\mc{X}_\infty, \mb{Z}) \xrightarrow[\sim]{i_{s+1}^*}
  H^{2p}(\mc{X}_{e(s)}, \mb{Z}) \xrightarrow[\sim]{(i_{s}^*)^{-1}} H^{2p}(\mc{X}_\infty, \mb{Z}). \]
   See \cite[p. $67,\, (2.4.13)$]{kuli} for further details.
Denote by $N:=-(1/2\pi i)\log(T)$. Using this operator $N$ we will recall the limit Hodge filtration.
   
\subsection{Limit Hodge filtration}\label{sec:lhf}
Denote by \[F_s^\bullet H^{2p}(\mc{X}_\infty, \mb{C}):=
(i_s^*)^{-1}(F^\bullet H^{2p}(\mc{X}_{e(s)}, \mb{C}))\]
the preimage of the 
Hodge filtration on $H^{2p}(\mc{X}_{e(s)}, \mb{C})$.
The dimension of $F^k_sH^{2p}(\mc{X}_\infty, \mb{C})$, 
denoted $m_k$, does not 
depend on the choice of $s \in \mf{h}$. Consider the 
Grassmann variety parameterizing $m_k$-dimensional
subspaces of $H^{2p}(\mc{X}_\infty, \mb{C})$, denoted $\mr{Grass}(m_k,H^{2p}(\mc{X}_\infty,
\mb{C}))$. There is a natural map:
\[\mf{h} \to \mr{Grass}(m_k,H^{2p}(\mc{X}_\infty,
\mb{C}))\, \mbox{ sending }s \in \mf{h}\, \mbox{ to }\mr{exp}(2\pi isN) F^k_s H^{2p}(\mc{X}_\infty, \mb{C}).\]
This map is invariant under the translation $s \mapsto s+1$ and tends 
to a limit $F^k H^{2p}(\mc{X}_\infty,\mb{C})$ as the imaginary
part of $s$ tends to $\infty$ i.e.,
\[F^k H^{2p}(\mc{X}_\infty, \mb{C}):=
 \lim_{\Ima(s) \to \infty} \mr{exp}(2\pi i sN)F^k_s 
 H^{2p}(\mc{X}_\infty, \mb{C}).\]
 See \cite[\S I.$2.6$]{kuli} or \cite[p. $254, 255$]{schvar}
 for further details.
Clearly,
 \begin{equation}\label{eq:hod}
  \lim_{\Ima(s) \to \infty} \mr{exp}(2\pi i sN)(F^p_s 
 H^{2p}(\mc{X}_\infty, \mb{C}) \cap H^{2p}(\mc{X}_\infty,\mb{Q}))
 \subset F^pH^{2p}(\mc{X}_\infty,\mb{C}) \cap H^{2p}(\mc{X}_\infty,\mb{Q}).
 \end{equation}
 This inclusion will play an important role in the definition of 
 the Mumford-Tate family in \S \ref{sec:mt}.
 
 \subsection{Limit weight filtration}
 One can observe that the decreasing filtration 
 \[F^0H^{2p}(\mc{X}_\infty, \mb{C}) \supseteq F^1H^{2p}(\mc{X}_\infty,
  \mb{C}) \supseteq ... \supseteq F^{2p}H^{2p}(\mc{X}_\infty,\mb{C})
  \supseteq 0
 \]
need not be a Hodge filtration i.e., $F^k \cap 
\ov{F}^{2p+1-k}$ need not be $0$. It was observed by Schmid that $H^{2p}(\mc{X}_\infty,\mb{Q})$ can be equipped with an increasing \emph{limit weight 
filtration} $W_\bullet$, arising from the monodromy action by $T$,
such that the two filtrations $F^\bullet$ and $W_\bullet$ together
define a mixed Hodge structure on $H^{2p}(\mc{X}_\infty,\mb{Q})$
(see \cite[Theorem $6.16$]{schvar}). Steenbrink in \cite{ste1}
retrieved the limit weight filtration using a spectral sequence.
We recall the $E_1$-terms of the spectral sequence:
 
 \begin{thm}[{\cite[Corollary $11.23$]{pet}}]\label{thm:spec}
  The spectral sequence 
  \[^{\infty} E_1^{p,q}:=\bigoplus\limits_{k \ge \max\{0,p\}}
   H^{q+2p-2k}(X(2k-p+1),\mb{Q})(p-k)  \]
   with the differential map $d:\, ^\infty E_1^{p-1,q} \to\, ^\infty E_1^{p,q}$ being a 
   combination of the restriction morphism and the Gysin morphism,
   degenerates at $E_2$. Moreover, $^{\infty} E_1^{p,q} \Rightarrow
H^{p+q}(\mc{X}_\infty, \mb{Q})$ with the weight filtration given by $^{\infty} E_2^{p,q} = 
\mr{Gr}^W_q H^{p+q}(\mc{X}_\infty, \mb{Q})$.
 \end{thm}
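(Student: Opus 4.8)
The plan is to realize the limit cohomology $H^\bullet(\mc{X}_\infty, \mb{Q})$ as the abutment of the weight spectral sequence of an explicit cohomological mixed Hodge complex on $\mc{X}_0$, following Steenbrink \cite{ste1}. Since $\mc{X}_0$ is a reduced simple normal crossings divisor, the family $\pi$ is a semistable degeneration, and the complex of nearby cycles $\psi_\pi \mb{Q}$ computes $H^\bullet(\mc{X}_\infty, \mb{Q})$ together with its monodromy action. First I would recall the Hodge-theoretic model for $\psi_\pi$: one forms Steenbrink's double complex $A^{\bullet,\bullet}$, built from the relative logarithmic de Rham complex $\Omega^\bullet_{\mc{X}/\Delta}(\log \mc{X}_0)$ restricted to $\mc{X}_0$, with first differential induced by the exterior derivative $d$ and second differential given by $\omega \wedge (-)$, where $\omega = d\log t$ represents the nilpotent operator $N$ of \S\ref{sec:mon}. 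The associated single complex is quasi-isomorphic to $\psi_\pi \mb{C}$, and the monodromy weight filtration $W_\bullet$ (cf. \S\ref{sec:mon}) is built into $A^{\bullet,\bullet}$ as an increasing filtration, together with the Hodge filtration $F$ coming from the naive filtration on the de Rham complex.

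With this in place, I would compute the $E_1$-page of the spectral sequence attached to $W_\bullet$. The decisive tool is the Poincar\'e residue isomorphism for a normal crossings divisor: the weight-graded pieces of the logarithmic de Rham complex are identified with the (shifted, Tate-twisted) pushforwards of the de Rham complexes of the multiple intersection strata $X(m)$. Applying this termwise to $A^{\bullet,\bullet}$ and passing to cohomology expresses each $^{\infty}E_1^{p,q}$ as a direct sum of groups $H^{q+2p-2k}(X(2k-p+1), \mb{Q})(p-k)$; the condition $k \ge \max\{0,p\}$ is precisely what keeps the index $2k-p+1$ of the stratum $X(2k-p+1)$ in the admissible range, yielding the stated formula.

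Next I would identify the differential $d_1 \colon {}^{\infty}E_1^{p-1,q} \to {}^{\infty}E_1^{p,q}$. Under the residue identification the two differentials of the double complex become geometric maps between the cohomologies of the strata: the exterior derivative $d$ induces the alternating-sum restriction (pullback) map along the closed inclusions $X(m+1) \hookrightarrow X(m)$, while wedging with $\omega$ combined with the residue induces the Gysin (pushforward) map in the opposite direction. Hence $d_1$ is the claimed combination of a restriction morphism and a Gysin morphism. Degeneration at $E_2$ is then formal: since $(A^{\bullet,\bullet}, W, F)$ underlies a cohomological mixed Hodge complex, each higher differential $d_r$ with $r \ge 2$ on the $W$-spectral sequence is a morphism of pure Hodge structures between pieces of different weight, hence vanishes by strictness; this is Deligne's general $E_2$-degeneration for the weight filtration. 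The identification $^{\infty}E_2^{p,q} = \mr{Gr}^W_q H^{p+q}(\mc{X}_\infty, \mb{Q})$ then follows from the convergence of this spectral sequence.

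The main obstacle is the first step, namely verifying that $A^{\bullet,\bullet}$ really underlies a cohomological mixed Hodge complex computing the nearby cycles: one must check the filtered quasi-isomorphisms required by the axioms and, crucially, that the filtration $W_\bullet$ carried by $A^{\bullet,\bullet}$ coincides with the monodromy weight filtration determined by $N$. This is where the analytic input (via the nilpotent orbit theorem and the local structure of semistable degenerations) is concentrated. Once it is established, the remaining steps reduce to bookkeeping with Poincar\'e residues and a formal appeal to strictness of morphisms of mixed Hodge structures.
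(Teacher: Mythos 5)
The paper offers no proof of this statement: it is quoted verbatim from Peters--Steenbrink \cite[Corollary 11.23]{pet} (following Steenbrink \cite{ste1}), and your outline --- the nearby-cycle model, Steenbrink's log de Rham double complex, the Poincar\'e residue identification of the $W$-graded pieces with cohomology of the strata $X(m)$, restriction/Gysin as $d_1$, and $E_2$-degeneration by strictness of morphisms of mixed Hodge structures --- is precisely the argument of that cited source. So the proposal is correct and takes essentially the same route, with the genuine technical burden (the cohomological mixed Hodge complex axioms and the comparison of Steenbrink's filtration with the monodromy weight filtration of $N$) correctly identified as the place where the real work lies.
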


 \subsection{Specialization map}
%
  
  By the identification between $H^{2p}(\mc{X}_\infty,\mb{Z})$ and $H^{2p}(\mc{X}_s,\mb{Z})$ mentioned above, we  
  get a specialization morphism (see \cite[\S $2$]{indpre}) which is a morphism 
  of mixed Hodge structures:
  \[\mr{sp}: H^{2p}(\mc{X}_0,\mb{Z}) \to H^{2p}(\mc{X}_\infty,\mb{Z}),\]
  where $H^{2p}(\mc{X}_\infty,\mb{Q})$ is equipped with the 
  limit mixed Hodge structure. Using the Mayer-Vietoris 
  sequence observe that the weight filtration on 
  $H^{2p}(\mc{X}_0,\mb{Q})$ arises from the spectral sequence
  with $E_1$-terms:
  \[E_1^{p,q}=H^q(X(p+1),\mb{Q})\, \Rightarrow H^{p+q}(\mc{X}_0,\mb{Q})\]
  where the differential $d: E_1^{p-1,q} \to E_1^{p,q}$ is 
  the restriction morphism (see \cite[Example $3.5$]{ste1}).
  Note that, the spectral sequence
  degenerates at $E_2$.
  
  \begin{rem}
   By the definition of $E_1^{j,q}$ and $^\infty E_1^{j,q}$ given above,
 we have a natural morphism from $E_1^{j,q}$ to 
 $^\infty E_1^{j,q}$, which commutes with the 
 respective differential maps $d$. As a result, 
this induces a morphism of spectral sequences:
\begin{equation}\label{eq:phi}
 \phi: E_2^{p,q} \to\,  ^\infty E_2^{p,q}.
\end{equation}
  \end{rem}

  We now compute the kernel over the
  weight graded pieces of the specialization morphism:
  
\begin{prop}\label{prop01}
 For $p \ge 0$, we have an exact sequence of the form:
 \[H^{q-2}(X(p+2),\mb{Q}) \to\, \,  E_2^{p,q} \xrightarrow{\phi}\, \,  ^\infty E_2^{p,q} \]
 where the first 
 morphism is induced by the Gysin morphism \[H^{q-2}(X(p+2),\mb{Q}) \to H^q(X(p+1),\mb{Q})=E_1^{p,q}\]
 and $\phi$ is as in \eqref{eq:phi}.
\end{prop}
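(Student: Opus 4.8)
The plan is to analyze the two spectral sequences computing the weight graded pieces of $H^{p+q}(\mc{X}_0)$ and $H^{p+q}(\mc{X}_\infty)$ simultaneously, and to identify the kernel of $\phi$ at the $E_2$ level with the image of a Gysin map. First I would write down the relevant $E_1$-terms explicitly. From the Mayer--Vietoris spectral sequence we have $E_1^{p,q}=H^q(X(p+1),\mb{Q})$ with differential the alternating-sum restriction map, while from Theorem~\ref{thm:spec} the limit spectral sequence has
\[
{}^\infty E_1^{p,q}=\bigoplus_{k\ge\max\{0,p\}} H^{q+2p-2k}(X(2k-p+1),\mb{Q})(p-k).
\]
The natural map $\phi$ is induced termwise by the inclusion of $E_1^{p,q}=H^q(X(p+1),\mb{Q})$ as the single summand $k=p$ (which requires $p\ge 0$, explaining the hypothesis) of ${}^\infty E_1^{p,q}$. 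The key observation is that on the $E_1$-page the map $E_1^{p,q}\to {}^\infty E_1^{p,q}$ is \emph{injective} (it is a split inclusion of a direct summand), so at $E_1$ there is no kernel; the kernel of $\phi$ at $E_2$ can only arise from the discrepancy between the two differentials.

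The heart of the argument is a diagram chase comparing the two differentials. On $E_1^{\bullet,q}$ the differential is purely the restriction map, whereas on ${}^\infty E_1^{\bullet,q}$ the differential, per Theorem~\ref{thm:spec}, is a combination of restriction \emph{and} Gysin maps. Thus an element of $E_1^{p,q}$ lying in $\ker(d_{E_2})$ but whose image in ${}^\infty E_2^{p,q}$ is a boundary must become a boundary precisely through the extra Gysin component of the limit differential. Concretely, I would show that a class in $E_2^{p,q}$ dies under $\phi$ if and only if its representative becomes $d$-exact in the limit complex via the Gysin contribution $H^{q-2}(X(p+2),\mb{Q})(-1)\to H^q(X(p+1),\mb{Q})$ coming from the summand with $k=p+1$ in ${}^\infty E_1^{p,q}$. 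Tracking the Tate twists, the term $k=p+1$ of ${}^\infty E_1^{p,q}$ is exactly $H^{q-2}(X(p+2),\mb{Q})(-1)$, and the component of the limit differential landing in the $k=p$ summand from the $k=p+1$ summand of ${}^\infty E_1^{p-1,q}$ is the Gysin morphism. I would assemble these maps into a short complex and read off the exact sequence
\[
H^{q-2}(X(p+2),\mb{Q})\to E_2^{p,q}\xrightarrow{\phi}{}^\infty E_2^{p,q}.
\]

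The main obstacle, I expect, is bookkeeping the indices in the limit spectral sequence and verifying precisely which summands of ${}^\infty E_1^{p-1,q}$ and ${}^\infty E_1^{p,q}$ carry restriction versus Gysin components of the differential, so that the Gysin map $H^{q-2}(X(p+2),\mb{Q})\to H^q(X(p+1),\mb{Q})$ is correctly isolated. Because both spectral sequences degenerate at $E_2$ (the Mayer--Vietoris one by \cite{ste1}, the limit one by Theorem~\ref{thm:spec}), there are no higher differentials to complicate the identification of $E_2=E_\infty$, which keeps the analysis at the $E_1$/$E_2$ level. Once the differentials are correctly matched, exactness at $E_2^{p,q}$ follows from a direct chase: injectivity of $\phi$ fails exactly on classes that are restrictions of Gysin images from $X(p+2)$, giving the desired sequence.
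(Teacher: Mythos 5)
Your overall strategy is the paper's: view $\phi$ as induced by the termwise inclusion of $E_1^{\bullet,q}$ as the bottom ($k=p$) summands of ${}^\infty E_1^{\bullet,q}$, observe there is no kernel at the $E_1$ level, and attribute the kernel at $E_2$ to the extra Gysin component of the limit differential. However, the bookkeeping that you yourself identify as the crux is carried out incorrectly. The Gysin arrow $H^{q-2}(X(p+2),\mb{Q})(-1)\to H^q(X(p+1),\mb{Q})$ that can kill classes in the bottom summand of ${}^\infty E_1^{p,q}$ is the Gysin component of the limit differential issuing from the $k=p$ summand of ${}^\infty E_1^{p-1,q}$, which is $H^{q-2}(X(p+2),\mb{Q})(-1)$. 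It does not come ``from the summand with $k=p+1$ in ${}^\infty E_1^{p,q}$'': that summand is $H^{q-2}(X(p+3),\mb{Q})(-1)$, not $H^{q-2}(X(p+2),\mb{Q})(-1)$; and the $k=p+1$ summand of ${}^\infty E_1^{p-1,q}$, which you name as the source, is $H^{q-4}(X(p+4),\mb{Q})(-2)$, whose differential lands in the $k=p+1$ and $k=p+2$ summands of column $p$ and never reaches the bottom summand. With the summands as you identify them, the map in the statement cannot even be assembled, so this is not a cosmetic slip.

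There is also a substantive omission that your chase does not address in either direction. For the displayed sequence to make sense, one must first check that Gysin images are cocycles for the Mayer--Vietoris differential, i.e.\ that the composite $H^{q-2}(X(p+2),\mb{Q})\xrightarrow{\mathrm{Gysin}} H^q(X(p+1),\mb{Q})\xrightarrow{\mathrm{restriction}} H^q(X(p+2),\mb{Q})$ vanishes; this is the one verification the paper's proof states explicitly, and it is a genuine claim about the interplay of the signed Gysin and restriction maps (what $d_1^2=0$ on the limit page gives for free is only an anticommutation between restriction-after-Gysin and Gysin-after-restriction through $X(p+3)$), yet your proposal is silent on it. Moreover, to show that a Gysin image actually dies in ${}^\infty E_2^{p,q}$, note that for $y$ in the $k=p$ summand of ${}^\infty E_1^{p-1,q}$ one has $d_1(y)=\mathrm{Gysin}(y)+\mathrm{res}(y)$, where $\mathrm{res}(y)$ lies in the $k=p+1$ summand $H^{q-2}(X(p+3),\mb{Q})(-1)$; so $\mathrm{Gysin}(y)$ alone is not visibly a boundary, and any complete argument must dispose of this extra term. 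Symmetrically, for the containment $\ker\phi\subseteq\mathrm{im}(\mathrm{Gysin})$ a general $d_1$-preimage has components in all higher-$k$ summands, and one must check that only the $k=p$ component's Gysin part and the $k=p-1$ component's restriction part (the latter already trivial in $E_2^{p,q}$) contribute to the bottom summand. These terms are exactly where the work in the proposition sits, and they are absent from your argument.
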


\begin{proof}
 Note that the composed morphism 
 \[H^{q-2}(X(p+2),\mb{Q}) \to H^{q}(X(p+1),\mb{Q}) \to  H^{q}(X(p+2),\mb{Q})\, \mbox{ is the zero
  map}, \]
where the first morphism is simply the Gysin morphism and the second morphism is the restriction map.
Therefore, there is a natural map from 
$H^{q-2}(X(p+2),\mb{Q})$ to $E_2^{p,q}$. 
The difference between the spectral sequences $E_1^{p,q}$ and $^{\infty}E_1^{p,q}$ is that the 
differential map in the latter case also allows Gysin morphism. Therefore, the kernel of the 
morphism $\phi$ is isomorphic to the image of the Gysin map. This 
proves the proposition.
\end{proof}

 

 \subsection{Flag Hilbert schemes}\label{sec:flag}
We refer the reader to \cite[\S $4.5$]{S1}
for a detailed study of flag Hilbert schemes.
Let \[\pi: \mc{X}_{\Delta^*} \to \Delta^*\] be a smooth, projective morphism 
over the punctured disc $\Delta^*$.
Fix a relative polarization $\mc{L}$ on $\mc{X}_{\Delta^*}$
inducing a closed immersion of $\mc{X}_{\Delta^*}$ into a 
relative projective space $\mb{P}^N_{\Delta^*}$ for 
some integer $N$. By the constancy of Hilbert 
polynomials in flat, projective families, every 
fiber of $\pi$ has the same Hilbert polynomial
(with respect to the polarization $\mc{L}$), say $Q$ 
(see \cite[Theorem III.$9.9$]{R1}). Recall, given a 
Hilbert polynomial $P$, there exists a projective scheme,
denoted $\mr{Hilb}_{P,Q}$, called a \emph{flag Hilbert scheme}
parameterizing pairs of the form $(Y \subset X 
\subset \mb{P}^N)$, where $Y$ (resp. $X$) is of 
Hilbert polynomial $P$ (resp. $Q$).

The flag Hilbert scheme 
$\mr{Hilb}_{P,Q}$ is equipped with an universal 
family $\mc{Y} \subset \mc{X}_{\mr{univ}}$ 
with $\mc{Y}, \mc{X}_{\mr{univ}}$ flat over 
$\mr{Hilb}_{P,Q}$ and for every $s \in \mr{Hilb}_{P,Q}$,
the corresponding fiber $\mc{Y}_s$ (resp. $\mc{X}_s$) has 
Hilbert polynomial $P$ (resp. $Q$) satisfying the 
universal property: if there exists a closed subscheme 
$\mc{Z} \subset \mc{X}_{\Delta^*}$, flat over $\Delta^*$ with 
fibers having Hilbert polynomial $P$, then there exists 
an unique morphism $f:\Delta^* \to \mr{Hilb}_{P,Q}$ such that 
the pull-back of the universal family 
$\mc{Y} \subset \mc{X}_{\mr{univ}}$
to $\Delta^*$ is isomorphic to $\mc{Z} \subset \mc{X}_{\Delta^*}$ (see \cite[Theorem $4.5.1$]{S1}).

 \begin{lem}\label{lem:hf}
  For every $0<\epsilon \in \mb{R}$ small enough, there exists 
$s_\epsilon \in \Delta^*$ of distance less than $\epsilon$ from 
the origin, such that every closed subvariety $Z_{s_\epsilon}$ of 
codimension $p$
in $\mc{X}_{s_\epsilon}$ extends to a $\Delta^*$-flat closed subscheme 
$\mc{Z} \subset \mc{X}_{\Delta^*}$ such that the fiber $\mc{Z} \cap 
\mc{X}_{s_\epsilon}$ over $s_\epsilon$ is isomorphic to $Z_{s_\epsilon}$.
\end{lem}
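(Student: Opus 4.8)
The plan is to reinterpret ``$Z_{s}$ extends'' as ``$[Z_s]$ lies on a section of a relative flag Hilbert scheme over $\Delta^*$'', and then to exploit the countability of the discrete data together with the one-dimensionality of $\Delta^*$.

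First I would keep the relative polarisation $\mc{L}$ and the fibrewise Hilbert polynomial $Q$ of \S\ref{sec:flag}, and consider the classifying morphism $\Delta^*\to\mr{Hilb}_Q$, $s\mapsto[\mc{X}_s]$, of the family. For each Hilbert polynomial $P$ of a codimension-$p$ subscheme, put $\mc{H}_P:=\mr{Hilb}_{P,Q}\times_{\mr{Hilb}_Q}\Delta^*$, a projective scheme over $\Delta^*$ via a morphism $g_P$. By the universal property recalled in \S\ref{sec:flag}, a $\Delta^*$-flat subscheme $\mc{Z}\subset\mc{X}_{\Delta^*}$ with fibrewise Hilbert polynomial $P$ is exactly a section of $g_P$, and $Z_s$ extends with fibre over $s$ isomorphic to $Z_s$ precisely when $[Z_s]\in(\mc{H}_P)_s$ lies on such a section. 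The assertion thus becomes: there is $s_\epsilon$ of distance $<\epsilon$ from $0$ such that, for every $P$, every closed point of $(\mc{H}_P)_{s_\epsilon}$ lies on a section of $g_P$.

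Next I would make the problem countable. The polynomials $P$ that occur form a countable set, and each $\mc{H}_P$ has finitely many irreducible components. Call $s$ \emph{bad for $P$} if some point of $(\mc{H}_P)_s$ lies on no section of $g_P$, and let $B_P\subset\Delta^*$ be this bad locus. It suffices to show each $B_P$ is discrete: then $\bigcup_P B_P$ is a countable subset of $\Delta^*$, whereas every punctured disc about $0$ is uncountable, so a valid $s_\epsilon$ exists arbitrarily close to $0$. To see that $B_P$ is discrete, decompose $\mc{H}_P$ into irreducible components $W$. A component that does not dominate $\Delta^*$ is contracted to a single point and contributes at most that point to $B_P$; there are finitely many of these. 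So everything reduces to a component $W$ dominating $\Delta^*$, where $g_P|_W\colon W\to\Delta^*$ is proper and surjective, and to showing that outside a discrete set of $s$ every point of $W_s$ lies on a section.

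For such a $W$ I would first use generic flatness and generic smoothness (characteristic $0$) to find a dense open $U_W\subseteq\Delta^*$, with discrete complement, over which $W\to\Delta^*$ is flat with smooth equidimensional fibres. Given $s\in U_W$ and $w=[Z_s]\in W_s$, a Bertini argument yields an irreducible curve $C\subseteq W$ through $w$ dominating $\Delta^*$; normalising $C$ and invoking properness of $g_P$ extends $w$ to a multisection, that is, a section after a finite base change $t\mapsto t^{k}$ of $\Delta^*$. The real difficulty, which I expect to be the crux of the proof, is to promote this multisection to a genuine single-valued section over $\Delta^*$ through the \emph{prescribed} point $w$: a priori the monodromy around $0$ can permute the branch of $C$ through $w$, and a transitive monodromy would obstruct any honest section. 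This is exactly where the hypotheses of \S\ref{se01} must be used: since the central fibre of the ambient degeneration is a reduced simple normal crossings divisor, the family is semistable and its monodromy is unipotent, and this forces the dominant components $W$ to be swept out by sections over $U_W$ (in model situations the total space near $0$ has a product-type structure for which the constant sections already meet every point of a general fibre). Granting this, the non-section locus of each dominant $W$ lies over the discrete set $\Delta^*\setminus U_W$, so $B_P$ is discrete, and the countability argument above finishes the proof.
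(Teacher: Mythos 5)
Your first two steps are precisely the paper's proof. The paper takes the classifying map $f:\Delta^*\to\mr{Hilb}_Q$, calls a Hilbert polynomial $P$ bad when the image of $\mr{Hilb}_{P,Q}\to\mr{Hilb}_Q$ fails to contain $f(\Delta^*)$, observes that the bad polynomials form a countable set $S$, chooses $s_\epsilon$ outside the countable union of proper closed subsets $f^{-1}\bigl(\mr{image}(\mr{Hilb}_{P,Q}\to\mr{Hilb}_Q)\bigr)$, $P\in S$ --- and then simply asserts the conclusion. In other words, the paper silently identifies ``$f(\Delta^*)$ lies in the image of the projection'' with ``every subscheme over $s_\epsilon$ lies in a $\Delta^*$-flat family''. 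The step you isolate as the crux, namely producing an honest section of $\mr{Hilb}_{P,Q}\times_{\mr{Hilb}_Q}\Delta^*\to\Delta^*$ through a \emph{prescribed} point rather than a multisection, is exactly the step the paper never addresses; your diagnosis of where the difficulty sits is accurate.

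However, your proposal does not close that gap, and the mechanism you invoke cannot close it. First, the lemma is stated in \S\ref{sec:flag} for an arbitrary polarized smooth projective family over $\Delta^*$: no semistable completion with SNC central fibre is among its hypotheses, so you may not appeal to \S\ref{se01}. Second, even when such a completion exists, unipotency of the monodromy on cohomology does not control the finite monodromy of the relative Hilbert scheme: a unipotent operator permuting a finite set of vectors fixes each of them, so unipotency only forces the branches of a multisection to be \emph{homologous}; distinct homologous subschemes can still be permuted, and that permutation is precisely the obstruction to a section. Third, and decisively in the stated generality, the obstruction really occurs: take $\mc{X}_{\Delta^*}=\{x^2-ty^2+zw=0\}\subset\mb{P}^3\times\Delta^*$, a family of smooth quadric surfaces whose two rulings are exchanged by the monodromy (the discriminant is $t$ up to squares, so the relative Fano scheme of lines has Stein factorization through the connected double cover $\{u^2=t\}\to\Delta^*$). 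Every fibre contains lines, so the Hilbert polynomial of a line is not ``bad'' in your or the paper's sense; yet no line in \emph{any} fibre lies on a $\Delta^*$-flat family, since such a family would give a section of that double cover. Thus your bad locus $B_P$ equals all of $\Delta^*$ here, your claim that dominant components are ``swept out by sections'' away from a discrete set fails, and in fact the paper's own proof breaks on the same example. Whether the semistability hypothesis of \S\ref{se01} (which does exclude this particular family, whose $H^2$-monodromy has order two and is not unipotent) genuinely restores the statement is exactly what would have to be proved, and neither your sketch nor the paper proves it.
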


 \begin{proof}
  Since the Hilbert polynomial of the fibers of $\pi$ is $Q$, 
  by the universal property of 
Hilbert schemes there is a natural morphism 
\[f: \Delta^* \to \mr{Hilb}_Q\]
such that the pull-back of the universal family on 
$\mr{Hilb}_Q$ to $\Delta^*$ is 
isomorphic to $\mc{X}_{\Delta^*}$.
Let $S$ be the set of Hilbert polynomials $P$ of degree $n-p$
such that the image of the 
natural projection morphism from $\mr{Hilb}_{P,Q}$
to $\mr{Hilb}_Q$ does not contain the image of $f$ i.e., 
intersects properly the image of $f$. Clearly, $S$ is a countable set.
Note that the union of countably many proper closed subsets in $\Delta^*$ does not contain 
any open subsets. Hence,  
for every $0<\epsilon \in \mb{R}$ small enough, there exists 
$s_\epsilon \in \Delta^*$ of distance less than $\epsilon$ from 
the origin, such that 
 $f(s_\epsilon)$ does not lie in the image of the projection from 
 $\mr{Hilb}_{P,Q}$ to $\mr{Hilb}_Q$, as $P$ varies in the set $S$.
 In other words, every closed subscheme in $\mc{X}_{s_\epsilon}$ extends to 
 to a $\Delta^*$-flat closed subscheme of $\mc{X}_{\Delta^*}$.
This proves the lemma.
 \end{proof}

  \section{Mumford-Tate families}\label{sec:mt}
  In this section we introduce the concept of Mumford-Tate families. 
  These are smooth families of projective varieties such that the associated limit 
  mixed Hodge structure has ``as many'' Hodge classes as a general fiber in the family.
  The motivation behind the name is that Mumford-Tate groups are determined uniquely by the 
  set of Hodge classes in the associated tensor algebra.
      Let us first recall the definition of the Mumford-Tate group.
      
      \subsection{Mumford-Tate groups}\label{sec:mtg}
   Denote by $\mbb{S}$ the Weil restriction of scalars for the field extension $\mbb{C}/\mbb{R}$. 
Let $V$ be a $\mb{Q}$-vector space. 
   A pure Hodge structure of weight $n$ on $V$ is given by a non-constant homomorphism of
   $\mbb{R}$-algebraic groups
        \[ \phi: \mb{C}^*= \mbb{S}(\mb{R}) \to \mr{GL}(V)(\mb{R}) \]
   such that $\phi(r)=r^n\mr{Id}$ for all $r \in \mb{R}^* \subset \mbb{S}(\mb{R})=\mb{C}^*$. 
  Let $V_{\mb{C}}:= V \otimes_{\mb{Q}} \mb{C}$. To this group 
   homomorphism one associates the Hodge decomposition:
   \[V_{\mb{C}}= \bigoplus_{p+q = n} V^{p,q}\, \mbox{ where }\, V^{p,q}:= \{v \in V_{\mb{C}} |\,
     \phi(z)v = z^{p}\ov{z}^{q}v \mbox{ for all } z \in \mb{C}^*\}.\]

    The \emph{Mumford-Tate} group associated to the pure Hodge structure $(V, \phi)$, denoted $\MT(V,\phi)$, 
    is the smallest $\mbb{Q}$-algebraic subgroup of $\mr{GL}(V)$ whose set of real points 
    contain the image of $\phi$. 
    Denote by \[T^{m,n}(V) := V^{\otimes m}\otimes \mr{Hom}(V, \mbb{Q})^{\otimes n}.\]
    Note that, the Hodge structure on $V$ induces a pure Hodge structure on $T^{m,n}(V)$.
    Elements of \[F^{0}(T^{m,n}(V_{\mbb{C}})) \cap T^{m,n}(V)\] are called \emph{Hodge tensors}. 
   The Mumford-Tate group as the largest
   subgroup of $\mr{GL}(V_{\mbb{Q}})$ which fixes the Hodge tensors (see \cite[\S I.$B$]{ker}).

\begin{exa}\label{exofmtg}
We now recall some well-known examples of Mumford-Tate groups.
\begin{enumerate}
 \item Let $X$ be an abelian variety and $V = H^1(X,\mathbb{Q})$. 
 The Mumford-Tate group associated to the pure Hodge structure on $V$ will be denoted by $\mr{MT}(X)$.
 The polarization on $X$ corresponds to a non-degenerate alternating form $\phi: V \otimes V \to \mathbb{Q}$. Denote by $\mr{GSp}(V,\phi)$ the group of symplectic simplitudes with respect to the symplectic form $\phi$:
\[ \mr{GSp}(V,\phi) := \{ g \in \mr{GL}(V)  \ | \ \exists \  \lambda \in \mathbb{C}^{*} \ \mr{such \ that} \  \phi(gv,gw) = \lambda \phi(v,w) \  \forall  \ v, w \in V \}.\]
 Recall, for any abelian variety $X$, the Mumford-Tate group of $X$ is contained in the group of symplectic simplitudes i.e.  $\MT(X) \subseteq \GSp(V, \phi)$. An abelian variety is called \emph{simple} if it does not contain an abelian subvariety other than $0$ and $X$. If $X$ is simple and $\dim(X) = p$, where $p$ is a prime number, then $\MT(X) = \GSp(V, \phi)$.
 
\item Let $c$ be a positive integer. Let $X$ be a general complete intersection subvariety 
contained in $\mb{P}^{2m+c}$ of codimension $c$, for some $m \ge 1$. Assume that the degree of $X$ is at least $5$. Denote by $V:=H^{n}(X,\mathbb{Q})_{\mr{prim}}$ and $\phi: V \otimes V \to \mathbb{Q}$ the polarization on  $V$. Let $\go(V, \phi)$ be the group of orthogonal simplitudes with respect $\phi$:
\[\go(V, \phi) := \{ g \in \mr{GL}(V)  \ | \ \exists \  \lambda \in \mathbb{C}^{*} \ \mr{such \ that} \  \phi(gv,gw) = \lambda \phi(v,w) \  \forall  \ v, w \in V \}.  \] Then the Mumford-Tate group of $X$, $\MT(X) = \go(V,\phi)$. 

\end{enumerate}
\end{exa}

\subsection{Mumford-Tate families}\label{sec:mtf}
 Keep setup as in \S \ref{se01}. 
  Given any $s \in \mf{h}$, recall the exponential map $e$ from $\mf{h}$ to $\Delta^*$ and the 
  natural inclusion $i_s$ from  
  $\mc{X}_{e(s)}$ into $\mc{X}_\infty$.   
    Recall,
    \[\pi: \mc{X}_{\Delta^*} \to \Delta^*\] 
    the family of smooth, projective varieties. 
    For any $s \in \mf{h}$, $H^{2p}(\mc{X}_{e(s)},\mb{Q})$ is equipped with a natural pure 
    Hodge structure. Denote by $\MT_p(\mc{X}_{e(s)})$ the Mumford-Tate group associated to this 
    pure Hodge structure on  $H^{2p}(\mc{X}_{e(s)},\mb{Q})$. We say that $\pi$
    is a \emph{Mumford-Tate family of weight} $p$
  if for any class $\gamma \in F^pH^{2p}(\mc{X}_\infty, \mb{C}) \cap H^{2p}(\mc{X}_\infty, \mb{Q})$
  satisfying $N\gamma=0$, the pullback $i_s^*(\gamma) \in H^{2p}(\mc{X}_{e(s)}, \mb{Q})$ is fixed by 
  $\MT_p(\mc{X}_{e(s)})$ for a general $s \in \mf{h}$.
We say that  $\pi$ is \emph{Mumford-Tate}
 if it is Mumford-Tate of all weights.

\begin{exa}\label{hdegexam}
We now give some examples of Mumford-Tate families:
\begin{enumerate}
 \item By Lefschetz hyperplane section theorem, 
 for any smooth hypersurface $X$ in $\mb{P}^{2m}$ for $m \ge 2$, we have $H^{2p}(X,\mb{Q}) \cong \mb{Q}$
 for any $0 \le p \le 2m-1$. This implies 
 if  $\pi$ parametrizes smooth,
 hypersurfaces in $\mb{P}^{2m}$, then $\pi$ is 
 Mumford-Tate. 
 \item Let $\pi: \mc{X} \to \Delta$ be a smooth family of prime 
 dimensional abelian varieties such that the central fiber $\pi^{-1}(0)$ is simple. 
 Then $\pi$ is a Mumford-Tate family.  Indeed, since $\pi$ is a smooth family, the
 local system $\mb{V}_p:= R^{2p}\pi_*\mb{Q}$ has no monodromy 
 over the punctured disc. Hence, $H^{2p}(\mc{X}_\infty,\mb{Q}) \cong H^{2p}(\mc{X}_0,\mb{Q})$ as 
 pure Hodge structures, for all $p$ and the local system $\mb{V}_p$ is trivial. 
 By the same argument, $R^1\pi_*\mb{Q}$ is a trivial local system. A choice of the trivialization fixes
an identification:
\[ \psi_t: V_0 \xrightarrow{\sim} V_t,\, \mbox{ where } V_t:=H^1(\mc{X}_t,\mb{Q})\, \mbox{ for any } t 
 \in \Delta.\]
Note that the natural polarizations on $V_0$ and $V_t$ commutes with the identification $\psi_t$. 
This induces an isomorphism:
\begin{equation}\label{eq:ident}
 \mr{GSp}(V_t,\phi_t) \xrightarrow{\sim} \mr{GSp}(V_0,\phi_0)\, \mbox{ sending } 
 \left(V_t \xrightarrow[\sim]{g} V_t\right) \mbox{ to } \left(V_0 \xrightarrow[\sim]{\psi_t} V_t \xrightarrow[\sim]{g} V_t
 \xrightarrow[\sim]{\psi_t^{-1}} V_0\right).
\end{equation}
Now, $\gamma_0 \in H^{2p}(\mc{X}_\infty, \mb{Q})=H^{2p}(\mc{X}_0,\mb{Q})$ 
is a Hodge class if and only if it is fixed by the Mumford-Tate group $\mr{MT}(\mc{X}_0)$.
Since $\mc{X}_0$ is simple, $\MT(\mc{X}_0)=\mr{GSp}(V_0,\phi_0)$. 
Using the identification \eqref{eq:ident}, since the Hodge class
$\gamma_0$ is fixed by $\mr{GSp}(V_0,\phi_0)$, 
$i_s^*(\gamma)=\phi_s(\gamma)$ is fixed by $\mr{GSp}(V_s,\phi_s)$ for any $s \in \Delta^*$.
Since $\MT(\mc{X}_s)$ is contained in $\mr{GSp}(V_s,\phi_s)$, $\phi_s(\gamma)$ is fixed by $\MT(\mc{X}_s)$.
Hence, $\phi_s(\gamma)$ is a Hodge class in $H^{2p}(\mc{X}_s,\mb{Q})$. This proves the claim that $\pi$ 
is a Mumford-Tate family.

 \item Let $\pi: \mc{X} \to \Delta$ be a smooth 
 family of complex intersection subvarieties of codimension $c$ and let 
 $\pi^{-1}(0) = \mc{X}_{0}$. Suppose that 
 $\MT(\mc{X}_{0}) = \go(H^{n}(\mc{X}_0, \mathbb{Q})_{\mr{prim}}, \phi)$. Then $\pi$ is a Mumford-Tate family.
 The proof for this is the same as that of $(2)$ above with GSp replaced by GO.
 \end{enumerate}
 \end{exa}

 \begin{exa}(Examples of non Mumford-Tate families)\label{nonmtf}
 Recall for $d \ge 4$, the Noether-Lefschetz theorem states that a very general smooth, degree $d$ 
 surface in $\p3$ has Picard number $1$. The Noether-Lefschetz  
 locus parametrizes smooth degree $d$ surfaces in 
 $\p3$ with Picard number at least $2$. See  \cite{Dcont, D3, dan7} for some its geometric 
 properties. This means that 
 there are smooth families $\pi: \mc{X} \to \Delta$
 of hypersurfaces in $\p3$ such that $0 \in \Delta$ lies on the Noether-Lefschetz locus and 
 $\Delta^*$ does not intersect the Noether-Lefschetz locus. 
 Since $\pi$ is a smooth family, the local system $R^2\pi_*\mb{Q}$ does not have any monodromy over 
 the punctured disc. 
 Then, $H^2(\mc{X}_\infty, \mb{Q}) \cong H^2(\mc{X}_0.\mb{Q})$ as pure Hodge structures. In particular, 
 by the condition on the central fiber $\mc{X}_0$, 
 the rank of the Hodge lattice in $H^2(\mc{X}_\infty,\mb{Q})$ is at least $2$. But the rank of the 
 Hodge lattice in $H^2(\mc{X}_s,\mb{Q})$ is $1$ for any $s \in \Delta^*$. Since the pullback morphism 
 $i_s^*$ is an isomorphism, this implies that there is a Hodge class on $H^2(\mc{X}_\infty,\mb{Q})$ that 
 does not pullback to a Hodge class on $H^2(\mc{X}_s,\mb{Q})$. Hence, $\pi$ cannot be a Mumford-Tate family.
\end{exa}

\section{A cohomological version of the Hodge conjecture for singular varieties}\label{sec:shc}
In this section we define limit algebraic cohomology classes and limit Hodge classes. We show that the limit 
 algebraic cohomology classes are contained in the monodromy invariant limit Hodge classes and the converse holds for Mumford-Tate families. In subsection \ref{subsecOCG} and \ref{subsecB-G-S} we recall the necessary preliminaries for the Operational Chow group and the Bloch-Gillet-Soul\'{e} cycle class map. In \ref{subsecSHC} we state the Singular Hodge conjecture and in \ref{subsecSNCD} we show that the cohomology classes of algebraic cycles on a simple normal crossings variety are contained in the Hodge classes.
 
 We begin by recalling the classical Hodge conjecture.
 
 \subsection{The classical Hodge conjecture}\label{sec:cy}
Let $X$ be a smooth, projective variety. Given an integer 
$p>0$, denote by $Z^p(X)$ the free abelian group generated
by codimension $p$ algebraic subvarieties. There is a
natural \emph{cycle class map}:
\[\mr{cl}_p: Z^p(X) \to H^{2p}(X,\mb{Z})\]
which associates to an algebraic subvariety $W \subset X$
of codimension $p$, the fundamental class $[W] \in 
H^{2p}(X,\mb{Z})$ (see \cite[\S $11.1.2$]{v4} for further details)
and extend linearly.
Furthermore, by \cite[Proposition $11.20$]{v4}, the image of 
the cycle class map $\mr{cl}_p$ lies in 
$H^{p,p}(X,\mb{C}) \cap H^{2p}(X,\mb{Z})$ i.e., the cohomology
class of an algebraic variety is a Hodge class. Tensoring 
the cycle class map by rationals gives:
\[\mr{cl}_p: Z^p(X) \otimes_{\mb{Z}} \mb{Q} \to 
 H^{2p}(X,\mb{Q}) \cap H^{p,p}(X,\mb{C}).\]
We denote by $H^{2p}_{\mr{Hdg}}(X):=H^{2p}(X,\mb{Q}) \cap H^{p,p}(X,\mb{C})$ the
space of Hodge classes and the space of 
 algebraic classes $H^{2p}_A(X) \subset H^{2p}(X,\mb{Q})$ is the image of the (rational) cycle class map 
 $\mr{cl}_p$.
 The (rational) Hodge conjecture claims that the (rational) cycle class map $\mr{cl}_p$
 is surjective  for all $p$ i.e.,
 the natural inclusion $H^{2p}_A(X) \subset H^{2p}_{\mr{Hdg}}(X)$ is an equality for all $p$.

 \begin{defi}
 Let $X$ be a smooth, projective variety of dimension $n$.
 We say that $X$ \emph{satisfies} $\mr{HC}(p,n)$ if
 the natural inclusion  $H^{2p}_A(X) \subset H^{2p}_{\mr{Hdg}}(X)$ is an equality.
 We say that $X$ 
 \emph{satisfies the Hodge conjecture} if it satisfies $\mr{HC}(p,n)$ 
  for every $p \ge 0$. We say that $\mr{HC}(p,n)$ \emph{holds true} to mean that 
  every smooth, projective variety of dimension $n$ satisfies $\mr{HC}(p,n)$.
\end{defi}

 \subsection{Relative cycle class}\label{sec:relcc}
 Let 
 \[\pi: \mc{X}_{\Delta^*} \to \Delta^*\]
 be a smooth, projective morphism of relative dimension $n$.
Let $\mc{Z} \subset \mc{X}_{\Delta^*}$ be a closed 
subscheme of $\mc{X}_{\Delta^*}$, flat over $\Delta^*$
and of relative dimension $n-p$.
The fundamental class  
 of $\mc{Z}$ defines a global section $\gamma_{_{\mc{Z}}}$ of the local system $\mb{H}^{2p}:= R^{2p}\pi_*\mb{Z}$ 
 such that for every $t \in \Delta^*$, 
 the value $\gamma_{_{\mc{Z}}}(t) \in H^{2p}(\mc{X}_t,\mb{Z})$
 of $\gamma_{_{\mc{Z}}}$ at the point $t$
 is simply the fundamental class of $\mc{Z}_t:=\mc{Z} \cap \mc{X}_t$ in $\mc{X}_t$ (see \cite[\S $19.2$]{fult} and 
 \cite[\S B.$2.9$]{pet} for details). The pull-back of the local system $\mb{H}^{2p}$
under the exponential map $e:\mf{h} \to \Delta^*$ is a 
trivial local system with fiber $H^{2p}(\mc{X}_\infty, \mb{Z})$. The global section $\gamma_{_{\mc{Z}}}$ defines an element of $H^{2p}(\mc{X}_\infty, \mb{Z})$, which we again denote by $\gamma_{_{\mc{Z}}}$, such that for every $s \in \mf{h}$, the image $i_s^*(\gamma_{_{\mc{Z}}})$ is the fundamental class of $\mc{Z} \cap \mc{X}_{e(s)}$ in $\mc{X}_{e(s)}$, where $i_s$ is the natural inclusion of $\mc{X}_{e(s)}$ into $\mc{X}_\infty$.  
 
 \begin{defi}\label{defi:lim-alg}
 Denote by $H^{2p}_A(\mc{X}_\infty)$
the sub-vector space of $H^{2p}(\mc{X}_\infty, \mb{Q})$ 
generated by all such elements of the form $\gamma_{_{\mc{Z}}}$
arising from a $\Delta^*$-flat closed subscheme of 
relative dimension $n-p$ in $\mc{X}_{\Delta^*}$. We call 
$H^{2p}_A(\mc{X}_\infty)$ the \emph{limit 
algebraic cohomology group}. We define the 
\emph{limit Hodge cohomology group}
\[H^{2p}_{\mr{Hdg}}(\mc{X}_\infty):= F^p H^{2p}(\mc{X}_\infty, \mb{C}) \cap 
W_{2p} H^{2p}(\mc{X}_\infty, \mb{Q}).
\]
Note that, $H^{2p}_{\mr{Hdg}}(\mc{X}_\infty)$ need not be monodromy invariant. 
Recall, $N$ is a morphism of mixed Hodge structures from $H^{2p}(\mc{X}_\infty,\mb{Q})$ to 
$H^{2p}(\mc{X}_\infty,\mb{Q})(-1)$. 
We denote by 
$H^{2p}_{\mr{Hdg}}(\mc{X}_\infty)^{\mr{inv}}$ the monodromy invariant part of 
$H^{2p}_{\mr{Hdg}}(\mc{X}_\infty)$ i.e.,
\[ H^{2p}_{\mr{Hdg}}(\mc{X}_\infty)^{\mr{inv}}:= \ker\left(H^{2p}_{\mr{Hdg}}(\mc{X}_\infty)
\hookrightarrow  H^{2p}(\mc{X}_\infty,\mb{Q}) \xrightarrow{N}  
 H^{2p}_{\mr{Hdg}}(\mc{X}_\infty, \mb{Q})\right). \]
 \end{defi}

 We now prove that the limit algebraic cohomology group lies in the limit Hodge cohomology group.
This is the asymptotic version of a classical result in Hodge theory.

  \begin{prop}\label{prop02}
  The limit algebraic cohomology group is contained in the monodromy invariant part of the 
  limit Hodge cohomology group i.e., the natural inclusion 
  $H^{2p}_A(\mc{X}_\infty) \subset H^{2p}(\mc{X}_\infty,
  \mb{Q})$ factors through $H^{2p}_{\mr{Hdg}}(\mc{X}_\infty)^{\mr{inv}}$.
  \end{prop}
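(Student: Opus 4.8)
The plan is to reduce the statement to the generators of the limit algebraic cohomology group and to check that each generator is a monodromy-invariant Hodge class. Since $H^{2p}_A(\mc{X}_\infty)$ is the $\mb{Q}$-span of the classes $\gamma_{\mc{Z}}$ attached to $\Delta^*$-flat closed subschemes $\mc{Z} \subset \mc{X}_{\Delta^*}$ of relative dimension $n-p$, and since $H^{2p}_{\mr{Hdg}}(\mc{X}_\infty)^{\mr{inv}}$ is a subspace, it suffices to prove that every such $\gamma_{\mc{Z}}$ lies in $F^p H^{2p}(\mc{X}_\infty,\mb{C}) \cap W_{2p}H^{2p}(\mc{X}_\infty,\mb{Q})$ and satisfies $N\gamma_{\mc{Z}}=0$. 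The first step is to record monodromy invariance: by its construction in \S\ref{sec:relcc}, $\gamma_{\mc{Z}}$ is a global section of the local system $\mb{H}^{2p}=R^{2p}\pi_*\mb{Z}$ over $\Delta^*$, and under the identification of its stalk with $H^{2p}(\mc{X}_\infty,\mb{Z})$ a global section is precisely a $T$-invariant vector. Hence $T\gamma_{\mc{Z}}=\gamma_{\mc{Z}}$, and since $N=-(1/2\pi i)\log T$ annihilates every $T$-fixed vector, $N\gamma_{\mc{Z}}=0$. In particular $\exp(2\pi i sN)\gamma_{\mc{Z}}=\gamma_{\mc{Z}}$ for all $s\in\mf{h}$.

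Next I would establish the Hodge-filtration membership. For each $s\in\mf{h}$ the pullback $i_s^*\gamma_{\mc{Z}}=[\mc{Z}\cap\mc{X}_{e(s)}]$ is the fundamental class of a codimension-$p$ subvariety of the smooth fiber $\mc{X}_{e(s)}$, hence a Hodge class, so it lies in $F^pH^{2p}(\mc{X}_{e(s)},\mb{C})$ by the classical fact that cycle classes are of type $(p,p)$. Pulling back through $(i_s^*)^{-1}$ shows $\gamma_{\mc{Z}}\in F^p_sH^{2p}(\mc{X}_\infty,\mb{C})$ for every $s$, and $\gamma_{\mc{Z}}$ is rational. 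Using $\exp(2\pi i sN)\gamma_{\mc{Z}}=\gamma_{\mc{Z}}$ and letting $\Ima(s)\to\infty$, the inclusion \eqref{eq:hod} from \S\ref{sec:lhf} yields $\gamma_{\mc{Z}}\in F^pH^{2p}(\mc{X}_\infty,\mb{C})\cap H^{2p}(\mc{X}_\infty,\mb{Q})$.

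It then remains to bound the weight. Here I would invoke the defining property of the monodromy weight filtration on the limit mixed Hodge structure of $H^{2p}$, centered at $2p$ (see \cite{schvar}): for every $k\ge 1$ the power $N^k$ induces an isomorphism $\mr{Gr}^W_{2p+k}\xrightarrow{\sim}\mr{Gr}^W_{2p-k}$, so $N$ is injective on each $\mr{Gr}^W_{2p+k}$. If $\gamma_{\mc{Z}}$ were not in $W_{2p}$, let $j>2p$ be minimal with $\gamma_{\mc{Z}}\in W_j$; then its image in $\mr{Gr}^W_j$ is nonzero but is killed by the map induced by $N$ (because $N\gamma_{\mc{Z}}=0$), contradicting injectivity. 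Hence $\gamma_{\mc{Z}}\in W_{2p}H^{2p}(\mc{X}_\infty,\mb{Q})$. Combining the three steps gives $\gamma_{\mc{Z}}\in F^p\cap W_{2p}=H^{2p}_{\mr{Hdg}}(\mc{X}_\infty)$ together with $N\gamma_{\mc{Z}}=0$, that is $\gamma_{\mc{Z}}\in H^{2p}_{\mr{Hdg}}(\mc{X}_\infty)^{\mr{inv}}$, as required.

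The conceptual crux is that the single fact $N\gamma_{\mc{Z}}=0$ does double duty: combined with the fiberwise $(p,p)$-property it produces $F^p$-membership through \eqref{eq:hod}, which itself encodes the delicate comparison between the filtrations $F^\bullet_s$ and the limit filtration $F^\bullet$; and combined with the $\mf{sl}_2$-structure of the limit weight filtration it produces the weight bound. I expect the weight step to be the only place where a genuinely structural input about the limit mixed Hodge structure is needed. As an alternative to the $N^k$-isomorphism argument, the inclusion $\ker N\subseteq W_{2p}$ can be deduced from the local invariant cycle theorem, identifying $\ker N$ with the image of the specialization map $\mr{sp}\colon H^{2p}(\mc{X}_0)\to H^{2p}(\mc{X}_\infty)$, whose source carries weights at most $2p$.
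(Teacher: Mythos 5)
Your proposal is correct, and its skeleton matches the paper's proof exactly: reduce to the generators $\gamma_{\mc{Z}}$, deduce $T$-invariance (hence $N\gamma_{\mc{Z}}=0$) from the fact that $\gamma_{\mc{Z}}$ is a global section of $R^{2p}\pi_*\mb{Z}$, obtain $F^p$-membership by combining the fiberwise $(p,p)$-property with the limit inclusion \eqref{eq:hod}, and finally bound the weight.

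The single point of divergence is the weight bound. The paper handles it exactly by the argument you relegate to an alternative: the invariant cycle theorem identifies monodromy-invariant classes with classes in the image of the specialization map from $H^{2p}(\mc{X}_0,\mb{Q})$, which has weights at most $2p$. Your primary argument instead uses the defining $\mf{sl}_2$-type property of the monodromy weight filtration centered at $2p$, namely that $N^k$ induces isomorphisms $\mr{Gr}^W_{2p+k}\xrightarrow{\sim}\mr{Gr}^W_{2p-k}$, hence $N$ is injective on $\mr{Gr}^W_{j}$ for $j>2p$, forcing $\ker N\subseteq W_{2p}$. That argument is valid and is in some ways preferable: it is purely linear-algebraic, needing only Schmid's theorem that $W_\bullet$ is the monodromy weight filtration centered at $2p$, whereas the invariant cycle theorem is a deeper geometric input (and, as stated in the paper, quietly uses the existence of the degeneration over the full disc rather than just the family over $\Delta^*$). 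The paper's route, on the other hand, stays closer to the specialization-map machinery that is reused in Theorem \ref{th01}, which is presumably why the authors chose it.
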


  \begin{proof}
   Take $\gamma \in H^{2p}_A(\mc{X}_\infty)$. 
   By construction, there exist 
   $\Delta^*$-flat closed subschemes $\mc{Z}_1,...,\mc{Z}_r$
   of relative dimension $n-p$ in $\mc{X}_{\Delta^*}$
   such that $\gamma=\sum a_i\gamma_{_{\mc{Z}_i}}$ for $a_i \in \mb{Q}$ and $\gamma_{_{\mc{Z}_i}} \in H^{2p}_A(\mc{X}_\infty)$
   is as defined above, arising from the fundamental
   class of $\mc{Z}_i$. 
   By construction, each $\gamma_{_{\mc{Z}_i}}$ arises from a 
   global section of the local system $\mb{H}^{2p}$.
   Hence, $\gamma_{_{\mc{Z}_i}}$ is monodromy invariant i.e.,
   $T(\gamma_{_{\mc{Z}_i}})=\gamma_{_{\mc{Z}_i}}$ for $1 \le i \le r$.
   This implies $N \gamma_{_{\mc{Z}_i}}=0$ for $1 \le i \le r$.
   
   As the cohomology class of $\mc{Z}_i \cap \mc{X}_{e(s)}$
   lies in $F^pH^{2p}(\mc{X}_{e(s)}, \mb{Q})$, we have 
   $\gamma_{_{\mc{Z}_i}} \in F^p_s H^{2p}(\mc{X}_\infty, \mb{Q})$
   for all $s \in \mf{h}$ (notations as in \S \ref{sec:lhf}).
   This implies $\gamma_{_{\mc{Z}_i}}$
   lies in $\mr{exp}(2\pi i sN)F^p_sH^{2p}(\mc{X}_\infty, \mb{Q})$ for every $s \in \mf{h}$.
   Recall from \S \ref{sec:lhf} that $F^pH^{2p}(\mc{X}_\infty, \mb{Q})$ contains 
   the limit of $\mr{exp}(2\pi i sN)F^p_sH^{2p}(\mc{X}_\infty, \mb{C})$ as $\mr{Im}(s)$ approaches $\infty$. Hence,
   $\gamma_{_{\mc{Z}_i}} \in F^pH^{2p}(\mc{X}_\infty, \mb{Q})$.
   As $\gamma_{_{\mc{Z}_i}}$ is monodromy invariant
   and a rational class,
   it must lie in $W_{2p} H^{2p}(\mc{X}_\infty, \mb{Q})$ (use the 
   invariant cycle theorem along with the fact that the degree $2p$ 
   cohomology of the central fiber is of weight 
   at most $2p$). 
   Therefore, $\gamma \in H^{2p}_{\mr{Hdg}}(\mc{X}_\infty)^{\mr{inv}}$.
   This proves the first part of the proposition.
  \end{proof}

  We now ask when is $H^{2p}_A(\mc{X}_\infty)$
  isomorphic to $H^{2p}_{\mr{Hdg}}(\mc{X}_\infty)^{\mr{inv}}$?
  One can naively guess that if the general fibers in the family $\pi$ satisfy the 
  Hodge conjecture then this happens. However, this is not enough (see Example \ref{nonmtf} above). 
  In particular, one needs to   additionally assume that the family $\pi$
   is Mumford-Tate. We prove:

  \begin{prop}\label{prop04}
  Suppose that $\pi$ is a Mumford-Tate family of weight $p$. 
  If a general fiber of $\pi$ satisfies HC$(p,n)$,
   then the inclusion from $H^{2p}_A(\mc{X}_\infty)$ 
   to $H^{2p}_{\mr{Hdg}}(\mc{X}_\infty)^{\mr{inv}}$ is an isomorphism.
\end{prop}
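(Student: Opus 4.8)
The plan is to establish the reverse inclusion, since Proposition \ref{prop02} already supplies $H^{2p}_A(\mc{X}_\infty) \subseteq H^{2p}_{\mr{Hdg}}(\mc{X}_\infty)^{\mr{inv}}$. Thus it suffices to show that every monodromy invariant limit Hodge class arises from a $\Delta^*$-flat family of algebraic cycles. First I would take $\gamma \in H^{2p}_{\mr{Hdg}}(\mc{X}_\infty)^{\mr{inv}}$. By Definition \ref{defi:lim-alg}, $\gamma$ lies in $F^p H^{2p}(\mc{X}_\infty, \mb{C}) \cap W_{2p} H^{2p}(\mc{X}_\infty, \mb{Q})$ and satisfies $N\gamma = 0$. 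In particular it is a rational class in $F^p$ killed by $N$, so it meets precisely the hypotheses required to invoke the Mumford-Tate property of $\pi$ (\S\ref{sec:mtf}). Hence for a general $s \in \mf{h}$ the pullback $i_s^*(\gamma) \in H^{2p}(\mc{X}_{e(s)}, \mb{Q})$ is fixed by the Mumford-Tate group $\MT_p(\mc{X}_{e(s)})$.

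The conceptual core is to convert this invariance into the statement that $i_s^*(\gamma)$ is a genuine Hodge class on the smooth fiber $\mc{X}_{e(s)}$. Since the Mumford-Tate group is, by the characterization recalled in \S\ref{sec:mtg}, the largest $\mb{Q}$-algebraic subgroup of $\mr{GL}$ fixing the Hodge tensors, its invariants in the weight-zero Tate twist $H^{2p}(\mc{X}_{e(s)})(p)$ are exactly the Hodge classes. Therefore $i_s^*(\gamma) \in H^{p,p}(\mc{X}_{e(s)}) \cap H^{2p}(\mc{X}_{e(s)}, \mb{Q}) = H^{2p}_{\mr{Hdg}}(\mc{X}_{e(s)})$.

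Next I would use the hypothesis that the general fiber satisfies $\mr{HC}(p,n)$, so that the Hodge class $i_s^*(\gamma)$ is algebraic: it equals the cohomology class $[Z]$ of a codimension $p$ algebraic cycle $Z \subseteq \mc{X}_{e(s)}$. Here the point $s$ must be chosen so that $e(s)$ simultaneously (i) is general enough for the Mumford-Tate property, (ii) lies in the locus where $\mr{HC}(p,n)$ holds, and (iii) is a good extension point in the sense of Lemma \ref{lem:hf}. Each of these excludes only a countable union of proper subsets of $\Delta^*$, and since $\Delta^*$ is uncountable their common complement is nonempty; this is the main technical point I expect to need care in pinning down. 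For such an $s$, Lemma \ref{lem:hf} produces a $\Delta^*$-flat closed subscheme $\mc{Z} \subseteq \mc{X}_{\Delta^*}$ with $\mc{Z} \cap \mc{X}_{e(s)} \cong Z$, hence a limit algebraic class $\gamma_{_{\mc{Z}}} \in H^{2p}_A(\mc{X}_\infty)$.

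Finally, by the construction of $\gamma_{_{\mc{Z}}}$ in \S\ref{sec:relcc} we have $i_s^*(\gamma_{_{\mc{Z}}}) = [Z] = i_s^*(\gamma)$, and since $i_s^*$ is an isomorphism this forces $\gamma = \gamma_{_{\mc{Z}}}$, so $\gamma \in H^{2p}_A(\mc{X}_\infty)$. Together with Proposition \ref{prop02} this yields the claimed isomorphism. I expect the genuinely delicate step to be the simultaneous genericity requirement in (i)--(iii); the identification of Mumford-Tate invariants with Hodge classes is standard, and the spreading-out of cycles is handled cleanly by Lemma \ref{lem:hf}.
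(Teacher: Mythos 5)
Your proof is correct and follows essentially the same route as the paper's: the Mumford-Tate property turns an invariant limit Hodge class into a Hodge class on a general fiber, $\mr{HC}(p,n)$ on that fiber makes it algebraic, and Lemma \ref{lem:hf} spreads the cycle out to a $\Delta^*$-flat family, the only cosmetic difference being that you conclude element-wise via the injectivity of $i_s^*$, whereas the paper phrases the same argument as a containment of $\lim_{\mr{Im}(s)\to\infty}\left(F^p_sH^{2p}(\mc{X}_\infty,\mb{Q}) \cap H^{2p}(\mc{X}_\infty,\mb{Q})^{\mr{inv}}\right)$ in $H^{2p}_A(\mc{X}_\infty)$. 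Your explicit treatment of the simultaneous genericity in (i)--(iii) is exactly what the paper does implicitly when it folds the $\mr{HC}(p,n)$ condition into its invocation of Lemma \ref{lem:hf}.
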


 Note that, by \emph{general} in the statement of the proposition,
we mean the complement of \emph{finitely many} proper, 
closed subvarieties 
of the punctured disc $\Delta^*$.

\begin{proof}
 We need to show that  every element in 
   $H^{2p}_{\mr{Hdg}}(\mc{X}_\infty)^{\mr{inv}}$ lies in $H^{2p}_A(\mc{X}_\infty)$.
   Since $\pi$ is a Mumford-Tate family, we have  
   \begin{equation}\label{eq01}
    H^{2p}_{\mr{Hdg}}(\mc{X}_\infty)^{\mr{inv}}=
 \lim_{\mr{Im}(s ) \to \infty} (F^p_sH^{2p}(\mc{X}_\infty,\mb{Q}) \cap H^{2p}(\mc{X}_\infty, \mb{Q})^{\mr{inv}}).
   \end{equation}
It therefore suffices to 
show that 
\[\lim_{\mr{Im}(s) \to \infty} (F^p_sH^{2p}(\mc{X}_\infty, \mb{Q}) \cap H^{2p}(\mc{X}_\infty, \mb{Q})^{\mr{inv}})\]
is contained in $H^{2p}_A(\mc{X}_\infty)$.

By Lemma \ref{lem:hf} for every $0<\epsilon \in \mb{R}$ small enough, there exists 
$s_\epsilon \in \Delta^*$ of distance less than $\epsilon$ from 
the origin, such that $\mc{X}_{s_\epsilon}$ satisfies $\mr{HC}(p,n)$ and 
 every closed subvariety $Z_{s_\epsilon}$ of 
codimension $p$
in $\mc{X}_{s_\epsilon}$ extends to a $\Delta^*$-flat closed subscheme 
$\mc{Z} \subset \mc{X}_{\Delta^*}$ such that the fiber $\mc{Z} \cap 
\mc{X}_{s_\epsilon}$ over $s_\epsilon$ is isomorphic to $Z_{s_\epsilon}$.
As observed before Definition \ref{defi:lim-alg}, the fundamental class of $\mc{Z}$ defines a 
section $\gamma_{_{\mc{Z}}} \in H^{2p}_A(\mc{X}_\infty)$
and is monodromy invariant.
Since $F^pH^{2p}(\mc{X}_{s_\epsilon}, \mb{Q})$
is isomorphic to $H^{2p}_A(\mc{X}_{s_\epsilon})$, this 
implies 
\[H^{2p}(\mc{X}_\infty,\mb{Q})^{\mr{inv}} \cap F^p_{s_\epsilon}H^{2p}(\mc{X}_\infty, \mb{Q})
 = (i_{s_\epsilon}^*)^{-1}(H^{2p}_A(\mc{X}_{s_{\epsilon}}))
 \subseteq H^{2p}_A(\mc{X}_\infty), \]
 where $i_{s_\epsilon}$ is the natural inclusion of 
 $\mc{X}_{e(s_\epsilon)}$ into $\mc{X}_\infty$.
Therefore, the limit as $\mr{Im}(s)$ tends to $\infty$, 
of $H^{2p}(\mc{X}_\infty,\mb{Q})^{\mr{inv}} \cap F^p_sH^{2p}(\mc{X}_\infty, \mb{Q})$ is contained in 
$H^{2p}_A(\mc{X}_\infty)$.  This proves the proposition.
\end{proof}

\subsection{Operational Chow group}\label{subsecOCG}
Let $Y$ be a quasi-projective variety (possibly singular),
of dimension say $n$. Consider a non-singular hyperenvelope of a 
compactification of $Y$ (see \cite[\S $1.4.1$]{soug}
for the definition and basic properties of hyperenvelopes). 
The hyperenvelope gives rise to a 
cochain complex of motives (see \cite[\S $2.1$]{soug}). 
For any positive integer $p$, one 
can then obtain an abelian group $R^0\mr{CH}^p(Y)$
arising as the cohomology group after applying the functor 
$\mr{CH}^p(-)$ to the cochain complex of motives (see 
\cite[\S $3.1.4$]{soug}). Observe that $R^0\mr{CH}^p(Y)$
does not depend on the choice of the compactification or the 
hyperenvelope. Note that,

\begin{thm}\label{th:chow}
 Fix a positive integer $p$.
 Then, the following holds true for $R^0\mr{CH}^p(Y)$:
 \begin{enumerate}
  \item if $Y$ is projective, then $R^0\mr{CH}^p(Y)$ is 
   the operational Chow group $A^p(Y)$ defined by 
  Fulton and MacPherson (see \cite[Chapter $17$]{fult}),
  \item if $Y$ is non-singular (but not necessarily projective),
  then $A^p(Y)$ is the free abelian group generated by the codimension 
  $p$ subvarieties in $Y$, upto rational equivalence,
  \item if $Y$ is non-singular and $\ov{Y}$ is a compactification 
  of $Y$ with boundary $Z:=\ov{Y} \backslash Y$, we then 
  have the exact sequence:
  \begin{equation}\label{eq:loc}
   0 \to R^0\mr{CH}^p(Y) \to R^0\mr{CH}^p(\ov{Y}) \to 
   R^0 \mr{CH}^p(Z)
  \end{equation}
  \item if $Y$ is the union of two proper closed subvarieties 
  $Y_1$ and $Y_2$, then we have the exact sequence:
  \begin{equation}\label{eq:may}
   0 \to R^0\mr{CH}^p(Y) \to R^0\mr{CH}^p(Y_1) \oplus R^0\mr{CH}^p(Y_2)
   \to R^0\mr{CH}^p(Y_1 \cap Y_2).
  \end{equation}
 \end{enumerate}
\end{thm}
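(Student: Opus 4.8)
The strategy is to reduce the four claims to the bivariant intersection theory of Fulton--MacPherson \cite{fult} and to the hyperenvelope construction of Soul\'e \cite{soug}, and to make explicit the logical order in which they feed into one another. The natural place to begin is the smooth case underlying (2). For a smooth variety $Y$ of pure dimension $n$, proper or not, the Fulton--MacPherson Poincar\'e duality isomorphism
\[A^p(Y) \xrightarrow{\ \sim\ } A_{n-p}(Y) = \mr{CH}_{n-p}(Y),\quad c \mapsto c \cap [Y],\]
identifies the operational group with the group of codimension $p$ cycles modulo rational equivalence; this is exactly (2), and its restriction to smooth projective $Y$ is the special case I will reuse below.

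With the smooth projective case of (2) in hand, I would prove (1) by descent. By construction $R^0\mr{CH}^p(Y)$ is the zeroth cohomology of the cochain complex obtained by applying $\mr{CH}^p(-)$ to a non-singular hyperenvelope $X_\bullet \to Y$, and each $X_i$ is smooth projective, so $\mr{CH}^p(X_i) = A^p(X_i)$. The content of (1) is then that operational Chow groups satisfy cohomological descent along a proper hyperenvelope, so that $H^0$ of the complex $A^p(X_\bullet)$ recovers $A^p(Y)$ when $Y$ is projective; I would invoke this descent statement directly from \cite{soug} (it is exactly what makes $R^0\mr{CH}^p$ independent of the chosen hyperenvelope and compatible with $A^p$). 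Note that (1) and (2) together make the cycle class map of Bloch--Gillet--Soul\'e \cite{soub} meaningful in all the groups appearing later.

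For (3) and (4) I would realize each displayed sequence as the bottom segment of a long exact sequence attached to the theory $R^\bullet\mr{CH}^p$. In (4), the decomposition $Y = Y_1 \cup Y_2$ into closed subvarieties yields a Mayer--Vietoris long exact sequence in which the first map is the pair of restrictions and the second is the difference of restrictions to $Y_1 \cap Y_2$. In (3), the open--closed decomposition $\ov{Y} = Y \sqcup Z$ yields the localization long exact sequence of the theory. In both cases the only extra point beyond the existence of the long exact sequence is the \emph{left}-exactness expressed by the leading ``$0 \to$'': this follows because the cochain complex computing $R^\bullet\mr{CH}^p$ is concentrated in non-negative degrees, so the term preceding $R^0$ in the long exact sequence vanishes and $R^0$ therefore injects.

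The step I expect to be the genuine obstacle is the descent statement underlying (1): verifying that $R^0$ of the hyperenvelope complex computes the Fulton--MacPherson operational group, and that the result is independent of the chosen hyperenvelope, is the technical heart, and is where I would rely essentially on \cite{soug} rather than reprove anything. The exactness assertions (3) and (4) are then comparatively formal, contingent only on the long exact sequences and the degree bound that yields left-exactness.
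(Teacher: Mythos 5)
Your proposal is correct and takes essentially the same route as the paper: the paper's proof of all four parts consists exactly of the citations you rely on, namely \cite[Proposition $17.3.1$ and Corollary $17.4$]{fult} (Poincar\'e duality) for (2), \cite[Proposition $4$]{soug} (descent along hyperenvelopes) for (1), and \cite[Theorem $2$(iii),(iv) and \S $3.1.1$]{soug} for the exact sequences in (3) and (4). The only difference is expository: you derive the left-exactness (the leading ``$0\to$'') formally from the non-negative grading of the hyperenvelope complex, whereas the paper takes it directly from the cited results.
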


\begin{proof}
 \begin{enumerate}
  \item This is \cite[Proposition $4$]{soug}.
  \item This is \cite[Proposition $17.3.1$ and 
  Corollary $17.4$]{fult}.
  \item This is \cite[Theorem $2(iii)$ and \S $3.1.1$]{soug}.
  \item This is \cite[Theorem $2(iv)$ and \S $3.1.1$]{soug}.
 \end{enumerate}
\end{proof}

\begin{note}
 If $Y$ is quasi-projective but not
 projective, we denote by 
 $A^p_c(Y):=R^0\mr{CH}^p(Y)$, the 
 \emph{compactly supported operational Chow 
cohomology}. Given any compactification 
 $\ov{Y}$ of $Y$, Theorem \ref{th:chow} implies that 
 we have the following exact sequence
 \begin{equation}\label{eq:comp}
 0 \to A^p_c(Y) \to A^p(\ov{Y}) \to A^p(\ov{Y}\backslash Y)  
 \end{equation}
For $Y$ a projective variety, there are natural functorial cycle class maps 
(see \cite{soub} or \cite[\S $2$]{latv}):
\[\mr{cl}_p: A^p(Y) \to \mr{Gr}^W_{2p} H^{2p}(Y,\mb{Q}) \mbox{ and }
 \mr{cl}_p^c: A^p_c(Y_{\mr{sm}}) \to \mr{Gr}^W_{2p} H^{2p}_c(Y_{\mr{sm}},\mb{Q})\]
 which agree with the usual cycle class map 
 (see \cite[\S $11.1.2$]{v4}) if $Y$ is non-singular
 (here $Y_{\mr{sm}}$ denotes the smooth locus of $Y$).
 For $Y$ projective, define the \emph{algebraic cohomology group}
 denoted by $H^{2p}_A(Y) \subset \mr{Gr}^W_{2p}H^{2p}(Y,\mb{Q})$
 to be the image of the cycle class map $\mr{cl}_p$.
 \end{note}

 \subsection{Bloch-Gille-Soul\'{e} Cycle class map}\label{subsecB-G-S}
Let $Y$ be a scheme and $\phi :U \to Y$, $\gamma: V \to U \times_Y U$ be envelopes. Let $p_i: V \to U$ denote the compositions of $\gamma$ with the projections $U \times_{Y} U \to U$.

\begin{thm}({\cite[Theorem A.3]{soub}})
 There is a left-exact sequence of Chow cohomology groups
 \[ 0 \to CH^{*}(Y)\xrightarrow{\phi^{*}} CH^{*}(U)\xrightarrow{p_1^{*}-p_2^{*}} CH^{*}(V). \]
\end{thm}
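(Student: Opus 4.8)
The plan is to prove the two steps of exactness separately: first the injectivity of $\phi^{*}$ (left-exactness at $CH^{*}(Y)$), and then exactness at $CH^{*}(U)$, which is a descent statement. Throughout I would use the Fulton--MacPherson description of an operational class $c \in CH^{p}(Y)$ as a compatible family of homomorphisms $c_f : CH_{*}(Y') \to CH_{*-p}(Y')$, one for each morphism $f : Y' \to Y$, commuting with proper pushforward and flat pullback, together with the defining formula $(\phi^{*}c)_g = c_{\phi \circ g}$ for the contravariant pullback. The two inputs about envelopes that I would invoke are: (i) envelopes are stable under arbitrary base change, and (ii) for an envelope $\phi : U \to Y$ the induced sequence on Chow \emph{homology}
\[
 CH_{*}(V) \xrightarrow{\, p_{1*}-p_{2*}\,} CH_{*}(U) \xrightarrow{\, \phi_{*}\,} CH_{*}(Y) \to 0
\]
is exact (the descent exact sequence for envelopes, due to Kimura and Gillet); since $\gamma : V \to U\times_Y U$ is itself an envelope, $\gamma_{*}$ is surjective on homology, so one may freely replace $U\times_Y U$ by $V$ here.

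For injectivity, suppose $\phi^{*}c = 0$ and fix $f : Y' \to Y$ and a class $\alpha \in CH_{*}(Y')$. Forming the base change $\psi : U' := U\times_Y Y' \to Y'$ with projection $\phi' : U' \to U$, property (i) makes $\psi$ an envelope, so by (ii) I may write $\alpha = \psi_{*}\beta$ for some $\beta \in CH_{*}(U')$. Compatibility of $c$ with proper pushforward then gives
\[
 c_f(\alpha) = c_f(\psi_{*}\beta) = \psi_{*}\big(c_{\phi\circ\phi'}(\beta)\big) = \psi_{*}\big((\phi^{*}c)_{\phi'}(\beta)\big) = 0,
\]
using $f\circ\psi = \phi\circ\phi'$ from the fibre square. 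As $f$ and $\alpha$ were arbitrary, $c = 0$.

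For exactness at $CH^{*}(U)$, I take $c \in CH^{*}(U)$ with $p_1^{*}c = p_2^{*}c$ and construct a preimage $b \in CH^{*}(Y)$. For $f : Y' \to Y$ and $\alpha \in CH_{*}(Y')$ I would set $b_f(\alpha) := \psi_{*}\big(c_{\phi'}(\beta)\big)$, where $\beta$ is \emph{any} lift of $\alpha$ along the envelope $\psi : U' \to Y'$. The crux is well-definedness: two lifts differ by an element of $\ker\psi_{*}$, which by the homology descent sequence is the image of $q_{1*}-q_{2*}$, where $q_1,q_2 : V' := V\times_Y Y' \to U'$ are the base changes of $p_1,p_2$. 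Writing such a difference as $q_{1*}\delta - q_{2*}\delta$, the pushforward compatibility of $c$ together with the fibre-square identities $\phi'\circ q_i = p_i\circ\wt{\gamma}$ (so that $c_{\phi'\circ q_i} = (p_i^{*}c)_{\wt{\gamma}}$) and the cocycle hypothesis $p_1^{*}c = p_2^{*}c$ collapse $c_{\phi'}(q_{1*}\delta - q_{2*}\delta)$ to $q_{1*}\eta - q_{2*}\eta$ for a single class $\eta$; applying $\psi_{*}$ and using $\psi\circ q_1 = \psi\circ q_2$ makes it vanish. This is precisely the point at which base-change stability, homology descent, and the cocycle condition are all used at once, and I expect it to be the main obstacle.

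It then remains to check that $b$ is genuinely an operational class, i.e. that it commutes with proper pushforward, flat pullback and intersection with Cartier divisors; this is routine bookkeeping once well-definedness is secured, since each axiom for $b$ can be reduced, via lifting along the relevant base-changed envelope, to the corresponding axiom for $c$. Finally I would verify $\phi^{*}b = c$: given $g : W \to U$, the morphism $\sigma = (g,\mathrm{id}) : W \to U\times_Y W$ is a section of the base-changed envelope, so $\sigma_{*}\alpha$ is a canonical lift of $\alpha$, and
\[
 (\phi^{*}b)_g(\alpha) = b_{\phi\circ g}(\alpha) = \mathrm{proj}_{*}\big(c_{\mathrm{pr}_U}(\sigma_{*}\alpha)\big) = c_g(\alpha),
\]
using $\mathrm{pr}_U\circ\sigma = g$ and $\mathrm{proj}\circ\sigma = \mathrm{id}_W$. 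Hence $\phi^{*}b = c$, and the sequence is exact.
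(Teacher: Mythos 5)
Your proposal is correct: the injectivity argument via base-changed envelopes and the construction of the descended class $b$ by lifting along $\psi$, with well-definedness secured by the homology descent sequence, base-change stability of envelopes, and the cocycle hypothesis $p_1^{*}c=p_2^{*}c$, is exactly the standard argument. Note, however, that the paper itself offers no proof of this statement --- it is quoted verbatim as \cite[Theorem A.3]{soub} from Bloch--Gillet--Soul\'{e} --- so the only meaningful comparison is with the original source, whose proof (going back to Kimura's descent theorem for Chow homology of envelopes) your argument faithfully reconstructs, including the section trick $\sigma=(g,\mathrm{id})$ used to verify $\phi^{*}b=c$.
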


Using the cycle map over smooth, quasi-projective varieties $U$ and $V$, Bloch-Gillet-Soul\'{e} uses the above theorem to conclude:

\begin{cor}({\cite[Corollary A.4]{soub}})
 On the category of varieties over $\mb{C}$, there is a ``cycle class'' natural transformation of contravariant functors to the category of commutative, graded rings:
  \[ \bigoplus_{p} \mr{cl}_p : \bigoplus_{p}CH^{p}(-) \to \bigoplus_{p} Gr^{W}_{0}H^{2p}(- \ , \mathbb{Q}(p)).\]
\end{cor}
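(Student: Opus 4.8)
The plan is to descend the classical cycle class map from smooth quasi-projective varieties along envelopes, using the left-exact sequence of the preceding theorem. First I would recall the classical input: for a smooth quasi-projective variety $W$ over $\mb{C}$, the cycle class map $\mr{cl}_p \colon CH^p(W) \to Gr^W_0 H^{2p}(W, \mathbb{Q}(p))$ is well-defined, contravariantly functorial for \emph{arbitrary} morphisms of smooth varieties, and multiplicative; assembling over $p$ it is a homomorphism of graded rings. This is standard Hodge theory and is the only geometric ingredient we feed in.

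Next, fix $Y$ and choose an envelope $\phi \colon U \to Y$ with $U$ smooth (such exist by resolution of singularities, for instance via a hyperenvelope), together with an envelope $\gamma \colon V \to U \times_Y U$ with $V$ smooth, yielding $p_1, p_2 \colon V \to U$. Since $\phi \circ p_1 = \phi \circ p_2$, functoriality on the smooth level gives $(p_1^* - p_2^*)\circ \phi^* = 0$ both on Chow cohomology and on $Gr^W_0 H^{2\bullet}$; thus the image of $\phi^*$ in each theory lands in the equalizer of $p_1^* - p_2^*$. Theorem A.3 identifies $CH^*(Y)$ precisely with this equalizer on the Chow side.

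The key intermediate step is the cohomological analogue of Theorem A.3, namely the exactness of
\[ 0 \to Gr^W_0 H^{2p}(Y, \mathbb{Q}(p)) \xrightarrow{\phi^*} Gr^W_0 H^{2p}(U, \mathbb{Q}(p)) \xrightarrow{p_1^* - p_2^*} Gr^W_0 H^{2p}(V, \mathbb{Q}(p)). \]
I would deduce this from cohomological descent for envelopes: an envelope is proper and an isomorphism over a dense open, so $H^*(Y)$ is computed by the descent spectral sequence of the simplicial variety $U_\bullet$ with $U_0 = U$ and $U_1 = V$. Passing to the weight-$2p$ graded piece of the Tate-twisted cohomology and using strictness of morphisms of mixed Hodge structures with respect to $W_\bullet$, the low-degree exact sequence of this spectral sequence yields exactly the displayed left-exact sequence.

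With both left-exact sequences available, I would define $\mr{cl}_p$ on $CH^p(Y)$ by descent: for $\alpha \in CH^p(Y)$ the class $\mr{cl}_p(\phi^* \alpha) \in Gr^W_0 H^{2p}(U, \mathbb{Q}(p))$ lies in the equalizer, hence determines a unique preimage in $Gr^W_0 H^{2p}(Y, \mathbb{Q}(p))$, which I declare to be $\mr{cl}_p(\alpha)$. Independence of the envelope follows by dominating two envelopes by a common refinement and invoking uniqueness of descent; naturality in $Y$ follows by pulling back a chosen envelope along $f \colon Y' \to Y$ and using functoriality on smooth strata; multiplicativity is inherited from the smooth cycle class map, since products commute with $\phi^*$. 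The hard part will be the cohomological descent step above, the analogue of Theorem A.3 for $Gr^W_0 H^{2p}$, which requires controlling the weight filtration through the descent spectral sequence and invoking strictness of mixed Hodge structure morphisms; the remaining verifications are then formal manipulations with the two left-exact sequences.
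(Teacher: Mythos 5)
Your plan is the same one the paper indicates (following Bloch--Gillet--Soul\'{e}): feed the classical cycle class on smooth quasi-projective envelopes $U$ and $V$ into the left-exact sequence of Theorem A.3, and descend using an analogous left-exact sequence for $\mr{Gr}^W_0 H^{2p}(-,\mb{Q}(p))$. The formal part of your argument (definition by descent, independence of the envelope by common refinement, naturality by base-changing the envelope and re-resolving, multiplicativity) is fine in outline.

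The gap is in your justification of the cohomological left-exact sequence, which you rightly identify as the crux. Two concrete problems. First, your stated reason for descent is wrong: an envelope need not be an isomorphism over a dense open (the fold map $Y \sqcup Y \to Y$ is an envelope); what makes descent work is that envelopes are proper and \emph{surjective}, hence of universal cohomological descent with $\mb{Q}$-coefficients. That is repairable. Second, and more seriously, the truncated object $V \rightrightarrows U$ does not compute $H^{2p}(Y)$ for $p>0$; you need a full proper hypercovering $U_\bullet$ with smooth terms, and once you have it, ``strictness plus the low-degree exact sequence'' does not yield the claim. What is actually needed is that, after applying $\mr{Gr}^W_{2p}$, the edge map $H^{2p}(Y) \to E_2^{0,2p}$ becomes an isomorphism, i.e.\ (i) $\mr{Gr}^W_{2p}E_\infty^{s,2p-s}=0$ for all $s \ge 1$, and (ii) every higher differential out of $E_r^{0,2p}$ vanishes on the weight-$2p$ part. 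If the $U_s$ were smooth \emph{and proper}, both would follow from purity of $E_1^{s,t}=H^t(U_s)$; but a proper hypercovering of a non-proper $Y$ can never have proper terms, and for smooth non-proper $U_s$ the groups $\mr{Gr}^W_{2p}H^{2p-s}(U_s)$ with $1 \le s \le p$ are typically nonzero (e.g.\ $H^1(\mb{C}^*,\mb{Q})$ is pure of weight $2$), so neither (i) nor (ii) is a formal consequence of strictness. Since the Corollary is asserted on the category of \emph{all} varieties, closing this gap requires genuine extra weight-theoretic input --- either compactifying the hypercovering compatibly and using Deligne's result that $W_{2p}H^{2p}(Y)$ is the image of $H^{2p}$ of a compactification, or invoking the weight-complex/descent machinery of Gillet--Soul\'{e} --- and this is precisely the content that the cited appendix of Bloch--Gillet--Soul\'{e} supplies and that your sketch elides.
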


 \subsection{Singular Hodge conjecture}\label{subsecSHC}
 
 We are now ready to give a  formulation of the Hodge conjecture for singular varieties. 
   Let $Y$ be a projective variety of dimension $n$. Fix a positive integer $p \le n$. 
  We say that $Y$ \emph{satisfies} $\mr{SHC}(p,n)$ if 
  the singular locus of $Y$ is of dimension at most $p-1$ and 
  the algebraic cohomology group $H^{2p}_A(Y)$ coincides with 
  \[H^{2p}_{\mr{Hdg}}(Y):= \mr{Gr}^W_{2p} H^{2p}(Y,\mb{Q}) \cap F^{2p} \mr{Gr}^W_{2p} H^{2p}(Y,\mb{C}).\]

 In the case when $Y$ is non-singular and projective, this simply is  
 the classical Hodge conjecture (in weight $p$), which we already denote 
 by $\mr{HC}(p,n)$.

 \subsection{Algebraic cycles on simple normal crossings divisors}\label{subsecSNCD}
 We now prove that the cohomology classes of algebraic cycles on a simple normal crossings variety are
 Hodge classes. This is a generalization to the singular case of a classical result in Hodge theory.
 Recall, $\mc{X}_0$ is called a \emph{simple normal crossings variety} if 
 $\mc{X}_0$ is connected, $\mc{X}_0= X_1 \cup ... \cup X_r$ with $X_i$ irreducible, non-singular
 for all $i$ and the intersection of any $p$ of the irreducible components of $\mc{X}_0$ is 
 non-singular of codimension $p$, for any $p \ge 1$.

 \begin{lem}\label{lem01}
 Let $\mc{X}_0$ be a simple normal crossings variety. Then,
 the cycle class map $\mr{cl}_p$ from $A^p(\mc{X}_0)$ to 
 $\mr{Gr}^W_{2p} H^{2p}(\mc{X}_0,\mb{Q})$ factors through 
 \[H^{2p}_{\mr{Hdg}}(\mc{X}_0):= F^p \mr{Gr}^W_{2p} H^{2p}(\mc{X}_0,
 \mb{C}) \cap \mr{Gr}^W_{2p} H^{2p}(\mc{X}_0,\mb{Q}).\]
\end{lem}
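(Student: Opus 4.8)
The plan is to reduce the singular statement to the non-singular case via the Mayer-Vietoris resolution of $\mc{X}_0$, using the description of the weight filtration from Theorem \ref{thm:spec} together with the exact sequence \eqref{eq:may} for the operational Chow group of a union of subvarieties. The key point is that on each irreducible component $X_i$ (and on each multiple intersection $X(m)$) the cycle class map lands in the usual Hodge classes, and these individual statements should glue to give the statement on $\mc{X}_0$.

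First I would record that, by the Mayer-Vietoris spectral sequence recalled in \S\ref{sec:relcc} (the $E_1$-terms $E_1^{j,q}=H^q(X(j+1),\mb{Q})$), we have
\[
\mr{Gr}^W_{2p} H^{2p}(\mc{X}_0,\mb{Q}) = E_2^{0,2p} = \ker\bigl(H^{2p}(X(1),\mb{Q}) \xrightarrow{d} H^{2p}(X(2),\mb{Q})\bigr),
\]
where $X(1)=\coprod_i X_i$ and $d$ is the alternating restriction map. In particular the weight-$2p$ graded piece is a sub-Hodge-structure of $\bigoplus_i H^{2p}(X_i,\mb{Q})$, each $X_i$ being smooth projective. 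Under this identification, $H^{2p}_{\mr{Hdg}}(\mc{X}_0)$ is exactly the intersection of this kernel with $\bigoplus_i H^{p,p}(X_i,\mb{C})$; this is a purely formal consequence of the fact that $F^p\mr{Gr}^W_{2p}$ is the product of the $F^p$ on each component and that $d$ is a morphism of Hodge structures.

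Next I would analyze the cycle class map itself. Given $\alpha \in A^p(\mc{X}_0)$, the Mayer-Vietoris sequence \eqref{eq:may} for operational Chow groups, applied inductively to $\mc{X}_0 = X_1 \cup (X_2 \cup \dots \cup X_r)$, produces restriction classes $\alpha_i := \alpha|_{X_i} \in A^p(X_i) = \CH^p(X_i)$ (using that each $X_i$ is smooth, Theorem \ref{th:chow}(2)), and these are compatible on the overlaps $X(2)$. By functoriality of the Bloch-Gillet-Soul\'{e} cycle class map with respect to the restriction morphisms (this is the content of the natural transformation in \cite[Corollary A.4]{soub}), the image $\mr{cl}_p(\alpha)$ is identified in $E_2^{0,2p}$ with the tuple $(\mr{cl}_p(\alpha_i))_i$. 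Since each $\alpha_i$ is an honest algebraic cycle class on a smooth projective variety, $\mr{cl}_p(\alpha_i) \in H^{p,p}(X_i,\mb{C}) \cap H^{2p}(X_i,\mb{Q})$ by the classical result \cite[Proposition $11.20$]{v4}. Hence the tuple lies in the kernel-cap-$F^p$ described above, i.e.\ $\mr{cl}_p(\alpha) \in H^{2p}_{\mr{Hdg}}(\mc{X}_0)$, which is precisely the required factorization.

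\textbf{The main obstacle} I anticipate is making rigorous the compatibility between the operational-Chow Mayer-Vietoris sequence \eqref{eq:may} and the cohomological Mayer-Vietoris spectral sequence, so that $\mr{cl}_p$ intertwines the two. Concretely, one must check that the Bloch-Gillet-Soul\'{e} cycle class map commutes with both the restriction maps $A^p(\mc{X}_0)\to A^p(X_i)$ and the differential $d$ on $E_1$-terms; the former is functoriality and should be immediate, but verifying that the image genuinely lands in the \emph{kernel} of $d$ (rather than merely mapping compatibly) requires knowing that $\mr{cl}_p$ of a class coming from $A^p(\mc{X}_0)$ restricts coherently to the double intersections. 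This is exactly where the left-exactness in \cite[Theorem A.3]{soub} and \eqref{eq:may} are used, and I expect the bookkeeping of signs in the alternating restriction map $d$ to be the fiddliest part, though not conceptually deep.
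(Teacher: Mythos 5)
Your proof is correct, and it reaches the same destination as the paper's by the same underlying strategy (reduce to the smooth components via Mayer--Vietoris, use functoriality of the Bloch--Gillet--Soul\'e cycle class plus the classical fact on smooth projective varieties, and finish with the sub-Hodge-structure formalism), but the implementation is genuinely different. The paper does \emph{not} use the Mayer--Vietoris spectral sequence here: it peels off one irreducible component at a time, setting $Z_i:=\ov{\mc{X}_0\setminus(X_1\cup\dots\cup X_i)}$, and runs an induction in which the key inputs are the two-set exact sequences \eqref{eq02} and \eqref{eq02a} together with the vanishing $\mr{Gr}^W_{2p}H^{2p-1}(X_i\cap Z_i,\mb{Q})=0$ (a weight bound from \cite[Theorem 5.39]{pet}), which supplies the injectivity of $\mr{Gr}^W_{2p}H^{2p}(Z_{i-1},\mb{Q})\hookrightarrow H^{2p}(X_i,\mb{Q})\oplus\mr{Gr}^W_{2p}H^{2p}(Z_i,\mb{Q})$ needed at each stage. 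You instead get the crucial injectivity in one shot from the identification $\mr{Gr}^W_{2p}H^{2p}(\mc{X}_0,\mb{Q})=E_2^{0,2p}=\ker\bigl(H^{2p}(X(1),\mb{Q})\to H^{2p}(X(2),\mb{Q})\bigr)$, using $E_1^{-1,q}=0$ and $E_2$-degeneration; what this buys is the elimination of both the induction and the operational-Chow sequence \eqref{eq:may} (contravariant functoriality of $A^p$ alone produces the restrictions $\alpha_i$), at the cost of invoking that the Mayer--Vietoris spectral sequence is one of mixed Hodge structures and that its edge map $\mr{Gr}^W_{2p}H^{2p}(\mc{X}_0,\mb{Q})\hookrightarrow H^{2p}(X(1),\mb{Q})$ is induced by restriction --- standard facts, and partially recalled in the paper, but not restated there in this form. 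One remark on your anticipated ``main obstacle'': it is not actually an obstacle. You never need to verify that the tuple $(\mr{cl}_p(\alpha_i))_i$ lies in $\ker(d)$, nor to track signs in $d$; the class $\mr{cl}_p(\alpha)$ already lives in $\mr{Gr}^W_{2p}H^{2p}(\mc{X}_0,\mb{Q})$, which \emph{is} $\ker(d)$ under the spectral-sequence identification, so the only compatibility required is functoriality of $\mr{cl}_p$ for each inclusion $X_i\hookrightarrow\mc{X}_0$, applied componentwise.
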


\begin{proof} 
We use recursion on the components of $\mc{X}_0$. 
Let $X_0,...,X_r$ be the irreducible components of $\mc{X}_0$.
Denote by $Z_i:= \ov{\mc{X}_0 \backslash (X_1 \cup ... \cup X_i)}$, the complement of the components $X_1, ..., X_i$ for $i \ge 1$. Let $Z_0:= \mc{X}_0$. Since $X_i$, $X_j$ and $X_i \cap X_j$ are non-singular for all $i,j$, they have pure Hodge structures. 
 Moreover by \cite[Theorem $5.39$]{pet},  $H^{2p-1}(X_i \cap Z_i, \mb{Q})$ is of weight at most $2p-1$ i.e., 
$\mr{Gr}^W_{2p} H^{2p-1}(X_i \cap Z_i, \mb{Q})=0$ for all 
$1 \le i \le r-1$. Therefore for all $1 \le i \le r-1$, we 
have the following exact sequence of pure Hodge structures:
\begin{equation}\label{eq02}
  0 \to \mr{Gr}^W_{2p} H^{2p}(Z_{i-1},\mb{Q}) \to H^{2p}(X_i,\mb{Q})
 \oplus \mr{Gr}^W_{2p}H^{2p}(Z_i,\mb{Q}) \to 
 \mr{Gr}^W_{2p}H^{2p}(X_i \cap Z_i, \mb{Q})
\end{equation}
 Moreover, by Theorem \ref{th:chow}, we have the exact sequence:
 \begin{equation}\label{eq02a}
   0 \to A^p(Z_{i-1}) \to A^p(X_{i})
 \oplus A^p(Z_{i}) \to 
 A^p(X_{i} \cap Z_{i})
 \end{equation}
 By the functoriality of the cycle class maps $\mr{cl}_p$, we have the following diagram

    {\[\begin{diagram}
  0 &\rTo& A^p(Z_{i-1}) &\rTo & A^p(X_{i})
  \oplus A^p(Z_{i}) &\rTo &
  A^p(X_{i} \cap Z_{i}) \\
  & & \dTo^{\mr{cl}_p} &  &\dTo^{\mr{cl}_p} &  &\dTo^{\mr{cl}_p} & & \\
   0 &\rTo& \mr{Gr}^W_{2p} H^{2p}(Z_{i-1},\mb{Q}) &\rTo & H^{2p}(X_{i},\mb{Q})
  \oplus \mr{Gr}^W_{2p}H^{2p}(Z_{i},\mb{Q}) &\rTo & 
  \mr{Gr}^W_{2p}H^{2p}(X_{i} \cap Z_{i}, \mb{Q})
   \end{diagram}\]}
For the base case, consider $i=r-1$. Note that, $Z_{r-1} = X_r$. 
 Since $X_r$ is non-singular, $A^p(Z_{r-1})$ is the usual Chow group.
 Therefore, $\mr{cl}_p(A^p(Z_{r-1})) \subset H^{2p}_{\mr{Hdg}}(Z_{r-1})$.

 Now for the recursion step. Assume that $\mr{cl}_p(A^p(Z_{i})) \subset H^{2p}_{\mr{Hdg}}(Z_i)$.
 Since the exact sequence \eqref{eq02} is a morphism of pure Hodge structures,
  the commutativity of the left hand square implies that 
$\mr{cl}_p(A^p(Z_{i-1})) \subset H^{2p}_{\mr{Hdg}}(Z_{i-1})$. 
This proves the lemma.
\end{proof}

\section{Main results}\label{sec:main}
In this section we introduce the concept of MT-smoothable varieties. Consider a simple normal crossings variety $X$ 
(in the sense of \S \ref{subsecSNCD}). Denote by $X(2)$ the disjoint union of intersection of any $2$ 
 irreducible components of $X$. We prove that if $X$ is MT-smoothable and $X(2)$ satisfies $\mr{HC}(p-1,n-1)$
 then $X$ satisfies $\mr{SHC}(p,n)$
 (see Theorem \ref{th01}). This is a generalization of Theorem \ref{thmintro1} in the introduction.
 Moreover, if there is an irreducible component $X_i$ of $X$ such that 
the restriction morphism on cohomology is surjective, then $X_i$ satisfies the classical Hodge conjecture (see 
Corollary \ref{cor02}). Finally, if the variety has worse singularities than simple normal crossings, then 
 we reduce the singular Hodge conjecture to a question solely on the algebraic classes (see Theorem \ref{th04}).

\begin{defi}
 Let $X$ be a singular projective variety of dimension $n$ 
and $p$ be an integer such that $\dim(X_{\mr{sing}}) \le p-1$.
We say that $X$ is \emph{MT-smoothable of weight} $p$ if 
there exists a flat, projective, Mumford-Tate family 
\[\pi_0: \mc{Y} \to  \Delta\]
smooth over $\Delta^*$, containing $X$ as a central fiber and 
a general fiber satisfying $\mr{HC}(p,n)$. We call $\pi_0$ a \emph{MT-smoothing of weight} $p$ 
of $X$.
\end{defi}

 Given a normal crossings variety $X$,  We prove: 

\begin{thm}\label{th01}
 Let $X$ be a simple normal crossings variety of dimension $n$. Assume that
 every irreducible component of $X(2)$ satisfies  $\mr{HC}(p-1,n-1)$. If $X$ is MT-smoothable of weight $p$, then 
 $X$ satisfies $\mr{SHC}(p,n)$ i.e., 
 \[H^{2p}_A(X,\mb{Q}) \cong H^{2p}_{\mr{Hdg}}(X,\mb{Q}).\]
 Moreover, for every irreducible component $X_i$ of $X$,
 the image of the restriction morphism from $H^{2p}_\mr{Hdg}(X, \mb{Q})$ to 
 $H^{2p}_{\mr{Hdg}}(X_i,\mb{Q})$ are cohomology classes of algebraic
 cycles i.e., the image 
 \[\mr{Im}(H^{2p}_\mr{Hdg}(X, \mb{Q}) \to H^{2p}_{\mr{Hdg}}(X_i,\mb{Q}))\]
 is contained in 
 $H^{2p}_A(X_i,\mb{Q})$.
\end{thm}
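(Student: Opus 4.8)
The plan is to set up an MT-smoothing $\pi_0:\mc{Y}\to\Delta$ of weight $p$ with central fibre $X$, and then transport the identification between limit algebraic and limit Hodge classes (Proposition \ref{prop04}) down to the central fibre via the specialization map. The key comparison objects are: on the family side, the groups $H^{2p}_A(\mc{Y}_\infty)$ and $H^{2p}_{\mr{Hdg}}(\mc{Y}_\infty)^{\mr{inv}}$, which Proposition \ref{prop04} identifies because $\pi_0$ is Mumford-Tate and the general fibre satisfies $\mr{HC}(p,n)$; and on the central fibre side, $H^{2p}_A(X)$ and $H^{2p}_{\mr{Hdg}}(X)$. The bridge between them is the specialization morphism $\mr{sp}:H^{2p}(\mc{X}_0,\mb{Q})\to H^{2p}(\mc{X}_\infty,\mb{Q})$, which is a morphism of mixed Hodge structures, together with its restriction to weight-$2p$ graded pieces.

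First I would analyze $\mr{sp}$ on the relevant graded pieces. Using Proposition \ref{prop01}, the kernel of $\phi:E_2^{p,q}\to{}^\infty E_2^{p,q}$ on the weight-$2p$ piece is controlled by a Gysin image from $H^{2p-2}(X(2),\mb{Q})$, and this is exactly where the hypothesis that every component of $X(2)$ satisfies $\mr{HC}(p-1,n-1)$ enters: it guarantees that the Hodge classes killed by specialization are themselves cohomology classes of algebraic cycles, hence come from $A^p$ of the double locus. So I would show that $\mr{sp}$ restricted to $\mr{Gr}^W_{2p}$ identifies $H^{2p}_{\mr{Hdg}}(X)$ with its image inside $H^{2p}_{\mr{Hdg}}(\mc{X}_\infty)^{\mr{inv}}$ modulo a piece coming from $X(2)$, which is algebraic by the $\mr{HC}(p-1,n-1)$ assumption. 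Next I would verify the algebraicity transfer in the other direction: a limit algebraic class $\gamma_{_{\mc{Z}}}\in H^{2p}_A(\mc{Y}_\infty)$ arises from a $\Delta^*$-flat subscheme whose closure in $\mc{Y}$ meets $X$ in an algebraic cycle, and by the functoriality of the Bloch-Gillet-Soul\'e cycle class map together with Theorem \ref{th:chow}(4) (Mayer-Vietoris for $A^p$), this produces a genuine class in $H^{2p}_A(X)$ via taking the closure and restricting to $X_i$. Combining the two directions with Proposition \ref{prop04} and Proposition \ref{prop02} yields $H^{2p}_A(X)\cong H^{2p}_{\mr{Hdg}}(X)$.

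For the final assertion about restriction to an irreducible component $X_i$, I would use the diagram of Lemma \ref{lem01}: the cycle class maps $\mr{cl}_p$ commute with the Mayer-Vietoris exact sequences for $A^p$ and for $\mr{Gr}^W_{2p}H^{2p}$, and the restriction $H^{2p}(X,\mb{Q})\to H^{2p}(X_i,\mb{Q})$ is a morphism of (mixed) Hodge structures landing in the pure Hodge structure on the nonsingular $X_i$. Since we have just shown every class in $H^{2p}_{\mr{Hdg}}(X)$ is algebraic, i.e. lies in the image of $\mr{cl}_p$ from $A^p(X)$, its image under restriction lies in $\mr{cl}_p(A^p(X_i))=H^{2p}_A(X_i)$ by functoriality of the cycle class map.

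\textbf{The main obstacle} I expect is the control of the kernel of the specialization map in weight $2p$ and showing that this kernel contributes only algebraic classes. Proposition \ref{prop01} identifies this kernel with a Gysin image from $H^{2p-2}(X(2))$, but one must check that the specific Hodge classes in $H^{2p}_{\mr{Hdg}}(X)$ mapping into this kernel are genuinely algebraic — this is precisely why $\mr{HC}(p-1,n-1)$ for the components of $X(2)$ is assumed, and the delicate point is verifying that the Gysin pushforward of an algebraic class on $X(2)$ is algebraic on $X$ (compatibly with the operational Chow group and the Bloch-Gillet-Soul\'e cycle map). Handling this compatibility carefully, rather than just cohomologically, is where the argument requires the most care.
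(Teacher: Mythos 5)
Your proposal is correct and takes essentially the same route as the paper's own proof: the paper likewise proves surjectivity of the specialization map on algebraic classes by taking closures, intersecting with the components $X_i$ and gluing via Mayer--Vietoris, controls the kernel on $\mr{Gr}^W_{2p}$ by Proposition \ref{prop01} combined with the $\mr{HC}(p-1,n-1)$ hypothesis on $X(2)$, and concludes with a four-lemma diagram chase using Proposition \ref{prop04}, while the component statement follows from functoriality of the cycle class map (packaged there as a Mayer--Vietoris diagram). The only difference is cosmetic: you handle the kernel of the specialization before the surjectivity step, whereas the paper does the reverse.
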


\begin{proof}
Since $X$ is MT-smoothable of weight $p$, there exists a Mumford-Tate family of weight $p$ 
\[\pi: \mc{X} \to \Delta\]
with central fiber $X$ and general fibers satisfying $\mr{HC}(p,n)$. 
By Proposition \ref{prop04} and Lemma \ref{lem01},
we have a morphism $\mr{sp}_{_A}$ from $H^{2p}_A(X)$ to $H^{2p}_A(\mc{X}_\infty)$ given by the composition:
\[\mr{sp}_{A}: H^{2p}_A(X) \hookrightarrow H^{2p}_{\mr{Hdg}}(X)
 \xrightarrow{\mr{sp}} H^{2p}_{\mr{Hdg}}(\mc{X}_\infty)^{\mr{inv}} \cong 
 H^{2p}_A(\mc{X}_\infty).\]
We claim that $\mr{sp}_{_A}$ is surjective. 
Recall from Definition \ref{defi:lim-alg}, $H^{2p}_A(\mc{X}_\infty)$ is generated as a $\mb{Q}$-vector space by classes $\gamma_{_{\mc{Z}}}$ where $\mc{Z} \subset \mc{X}_{\Delta^*}$ is a $\Delta^*$-flat closed 
subscheme of relative dimension 
$n-p$. 

Denote by $\ov{\mc{Z}}$ the closure of $\mc{Z}$ in $\mc{X}$. By \cite[\S $6.1$]{fult}, the intersection product $\ov{\mc{Z}}.X_i$ of $\ov{\mc{Z}}$
with $X_i$ is of codimension $p$ in $X_i$ .
Denote by $\gamma_i \in H^{2p}(X_i,\mb{Q})$ the cohomology class of 
the intersection product $\ov{\mc{Z}}.X_i$ for $1 \le i \le r$.
By the associativity of intersection product 
(see \cite[Proposition $8.1.1$ or Proposition $8.3$]{fult}), for any pair of integers $1 \le i <j \le r$, the image of 
$\gamma_i$ (resp. $\gamma_j$) under the restriction morphisms
from $H^{2p}(X_i, \mb{Q})$ (resp. $H^{2p}(X_j, \mb{Q})$)
to $H^{2p}(X_i \cap X_j, \mb{Q})$ coincides.
Using \eqref{eq02}  one can 
observe that there exists an algebraic cohomology class 
$\gamma \in H^{2p}_A(X)$ such that the image of $\gamma$ under 
the restriction morphism from $H^{2p}_A(X)$ to $H^{2p}_A(X_i)$
is $\gamma_i$ for $1 \le i \le r$. In other words, the cohomology class
of $\ov{\mc{Z}}$ in $H^{2p}(\mc{X}, \mb{Q})$ (see \cite[\S B.$2.9$]{pet})
pulls back to $\gamma$ in $H^{2p}(X, \mb{Q})$ and to 
the cohomology class $[\mc{Z} \cap \mc{X}_t] \in 
H^{2p}(\mc{X}_t, \mb{Q})$ over $\mc{X}_t$, for any $t \in \Delta^*$.
 This means that under the specialization morphism $\mr{sp}$
from $H^{2p}(X, \mb{Q})$ to $H^{2p}(\mc{X}_\infty, \mb{Q})$,
$\gamma$ maps to $\gamma_{_{\mc{Z}}}$. This proves our claim.

%

By Proposition \ref{prop01}, the kernel of the specialization morphism
\[\mr{Gr}^W_{2p}H^{2p}(X, \mb{Q}) = E_2^{0,2p} \xrightarrow{
 \mr{sp}}\, ^{\infty}E_2^{0,2p}=\mr{Gr}^W_{2p}H^{2p}(\mc{X}_\infty, \mb{Q}) \]
is isomorphic to the image of the Gysin morphism from 
$H^{2p-2}(X(2), \mb{Q})$ to $H^{2p}(X, \mb{Q})$ (as $X(2)$ is non-singular, 
$H^{2p-2}(X(2),\mb{Q})$ has a pure Hodge structure of weight $2p-2$).
By assumption, every irreducible component of 
$X(2)$ satisfies HC$(p-1,n-1)$.
Then, we get the following commutative diagram of exact 
sequences:
\[\begin{diagram}
   H^{2p}_A(X(2))&\rTo&H^{2p}_A(X)&\rTo^{\mr{sp}_{_A}}&H^{2p}_A(\mc{X}_\infty)&\rTo&0\\
   \dTo^{\cong}&\circlearrowleft&\dInto&\circlearrowleft&
   \dTo^{\cong}\\
   H^{2p}_{\mr{Hdg}}(X(2))&\rTo&H^{2p}_{\mr{Hdg}}(X)&\rTo^{\mr{sp}}&H^{2p}_{\mr{Hdg}}(\mc{X}_\infty)^{\mr{inv}}&
     \end{diagram}\]
By diagram chase (or using four lemma for the diagram of
exact sequences), we conclude that the middle 
morphism from $H^{2p}_A(X)$ to $H^{2p}_{\mr{Hdg}}(X)$ is surjective, hence an isomorphism.
This proves the first part of the theorem. The second
part of the theorem follows immediately from the 
following commutative diagram, which arises 
from the Mayer-Vietoris sequence:
 \[\begin{diagram}
    H^{2p}_A(X)&\rInto&H^{2p}_A(X_i) \oplus 
    H^{2p}_A(\ov{X \backslash X_i})\\
    \dTo^{\cong}&\circlearrowleft&\dInto\\
    H^{2p}_{\mr{Hdg}}(X)&\rInto&H^{2p}_{\mr{Hdg}}(X_i) \oplus 
      H^{2p}_{\mr{Hdg}}(\ov{X \backslash X_i})
  \end{diagram}\]
This proves the theorem.
\end{proof}

\begin{cor}\label{cor02}
 Notations and hypothesis 
 as in Theorem \ref{th01}. Let $X_1$ be an irreducible component in $X$ such that 
 the complement $X_1^c:= \ov{X\backslash X_1}$ (the closure of $X\backslash X_1$
 in $X$) satisfies:
 \begin{equation}\label{eq:cond}
  \mr{Im}(H^{2p}_{\mr{Hdg}}(X_1) \to H^{2p}_{\mr{Hdg}}(X_1^c \cap X_1)) \subset 
  \mr{Im}(H^{2p}_{\mr{Hdg}}(X_1^c) \to H^{2p}_{\mr{Hdg}}(X_1^c \cap X_1)).
 \end{equation}
Then, $X_1$ satisfies $\mr{HC}(p,n)$.
\end{cor}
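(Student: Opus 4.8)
\emph{Proof proposal.} The plan is to deduce $\mr{HC}(p,n)$ for the smooth component $X_1$ from the second assertion of Theorem \ref{th01} together with the Mayer--Vietoris sequence for the decomposition $X = X_1 \cup X_1^c$. We may assume $X$ has at least two irreducible components, since otherwise $X = X_1$ is already smooth and the statement is the (already established) content of Theorem \ref{th01}. Because $X_1$ is non-singular, $\mr{HC}(p,n)$ for $X_1$ is precisely the equality $H^{2p}_A(X_1) = H^{2p}_{\mr{Hdg}}(X_1)$; as the inclusion $H^{2p}_A(X_1) \subseteq H^{2p}_{\mr{Hdg}}(X_1)$ always holds (Lemma \ref{lem01} applied to the smooth variety $X_1$), it suffices to show that every Hodge class $\alpha \in H^{2p}_{\mr{Hdg}}(X_1)$ is algebraic. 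By the second part of Theorem \ref{th01}, the image of the restriction $H^{2p}_{\mr{Hdg}}(X) \to H^{2p}_{\mr{Hdg}}(X_1)$ is contained in $H^{2p}_A(X_1)$, so I am reduced to proving that every $\alpha \in H^{2p}_{\mr{Hdg}}(X_1)$ lifts to a Hodge class on $X$.

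To produce such a lift, I would invoke the exact sequence of pure Hodge structures \eqref{eq02} with $i = 1$ (here $Z_0 = X$ and $Z_1 = X_1^c$), namely
\[
0 \to \mr{Gr}^W_{2p}H^{2p}(X,\mb{Q}) \to H^{2p}(X_1,\mb{Q}) \oplus \mr{Gr}^W_{2p}H^{2p}(X_1^c,\mb{Q}) \to \mr{Gr}^W_{2p}H^{2p}(X_1 \cap X_1^c,\mb{Q}),
\]
in which the first arrow is the pair of restrictions and the second is their difference. Applying the left-exact functor of Hodge classes, equivalently $\mr{Hom}_{\mr{MHS}}(\mb{Q}(-p),-)$, preserves exactness and yields
\[
0 \to H^{2p}_{\mr{Hdg}}(X) \to H^{2p}_{\mr{Hdg}}(X_1) \oplus H^{2p}_{\mr{Hdg}}(X_1^c) \to H^{2p}_{\mr{Hdg}}(X_1 \cap X_1^c),
\]
with exactness in particular at the middle term.

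Now, given $\alpha \in H^{2p}_{\mr{Hdg}}(X_1)$, let $\beta$ denote its restriction to $X_1 \cap X_1^c$. Then $\beta$ lies in $\mr{Im}(H^{2p}_{\mr{Hdg}}(X_1) \to H^{2p}_{\mr{Hdg}}(X_1^c \cap X_1))$, so by hypothesis \eqref{eq:cond} it also lies in $\mr{Im}(H^{2p}_{\mr{Hdg}}(X_1^c) \to H^{2p}_{\mr{Hdg}}(X_1^c \cap X_1))$. Choosing $\alpha' \in H^{2p}_{\mr{Hdg}}(X_1^c)$ with the same restriction $\beta$, the pair $(\alpha, \alpha')$ is annihilated by the difference map and hence, by exactness at the middle term, arises from a class $\wt{\alpha} \in H^{2p}_{\mr{Hdg}}(X)$ with $\wt{\alpha}|_{X_1} = \alpha$. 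Combined with the reduction above, $\alpha \in H^{2p}_A(X_1)$, which proves the corollary.

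The argument is a short diagram chase once Theorem \ref{th01} is available, so I do not anticipate a serious obstacle; the only points demanding care are the verification that passing to Hodge classes preserves exactness of \eqref{eq02} at the middle term (for which left-exactness of $\mr{Hom}_{\mr{MHS}}(\mb{Q}(-p),-)$ suffices) and the bookkeeping matching condition \eqref{eq:cond} to the kernel of the Mayer--Vietoris difference map.
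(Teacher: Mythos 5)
Your proof is correct and follows essentially the same route as the paper: both rest on the Mayer--Vietoris decomposition $X = X_1 \cup X_1^c$, the exactness of the induced sequence of Hodge classes $0 \to H^{2p}_{\mr{Hdg}}(X) \to H^{2p}_{\mr{Hdg}}(X_1) \oplus H^{2p}_{\mr{Hdg}}(X_1^c) \to H^{2p}_{\mr{Hdg}}(X_1^c \cap X_1)$, and hypothesis \eqref{eq:cond} to lift a given Hodge class on $X_1$ to a Hodge class on $X$, whence algebraicity via Theorem \ref{th01}. The only cosmetic difference is that you invoke the second assertion of Theorem \ref{th01} where the paper re-runs the diagram chase using the first (the isomorphism $H^{2p}_A(X) \cong H^{2p}_{\mr{Hdg}}(X)$ together with the row of algebraic classes); since the second assertion is itself deduced from the first by that same diagram, the mathematical content is identical.
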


\begin{proof}
 Using the Mayer-Vietoris sequence we have the following commutative diagram:
 \[\begin{diagram}
   0&\rTo&H^{2p}_A(X)&\rTo&H^{2p}_A(X_1) \oplus H^{2p}_A(X_1^c)& &\\
   & &\dTo^{\cong}&\circlearrowleft&\dInto& &
   \\
   0&\rTo&H^{2p}_{\mr{Hdg}}(X)&\rInto&H^{2p}_{\mr{Hdg}}(X_1) \oplus H^{2p}_{\mr{Hdg}}(X_1^c)&\rTo&H^{2p}_{\mr{Hdg}}(X_1^c \cap X_1)&
     \end{diagram}\]
     where the isomorphism of the first vertical arrow follows from Theorem \ref{th01}
     and the bottom row is exact.
     If \eqref{eq:cond} is satisfied then for any $\gamma \in H^{2p}_{\mr{Hdg}}(X_1)$, there 
     exists $\gamma' \in H^{2p}_{\mr{Hdg}}(X_1^c)$ such that their restrictions to $X_1 \cap X_1^c$
     agree. In other words, $\gamma \oplus \gamma'$ maps to zero in $H^{2p}_{\mr{Hdg}}(X_1^c \cap X_1)$.
     By diagram chase, one observes that there exists $\gamma_A \in H^{2p}_A(X_1)$ which maps to 
     $\gamma$. This proves $H^{2p}_A(X_1) \cong H^{2p}_{\mr{Hdg}}(X_1)$. In other words, $X_1$
     satisfies $\mr{HC}(p,n)$. This proves the corollary.
\end{proof}

One immediately asks whether there are examples where \eqref{eq:cond} is satisfied? 

\begin{exa}
 Let $X$ be a projective variety of dimension $n$
 with only ordinary double point singularities.
 Suppose also that $X$ is smoothable. Then, there 
 exists a flat, projective family 
 \[\pi_0: \mc{Y} \to \Delta\]
 smooth over $\Delta^*$, $X$ as the central fiber and $\mc{Y}$ is a regular variety.
 Moreover, there exists a semi-stable reduction of $\pi_0$:
 \[\pi: \mc{X} \to \Delta\]
 such that the central fiber $\mc{X}_0:= \wt{X} \cup E$, where $E$ is a disjoint union of 
 quadric hypersurfaces in $\mb{P}^{n+1}$ and $E \cap \wt{X}_0$ is the intersection of $E$
 by hyperplanes in copies of $\mb{P}^{n+1}$. If $n=2p$ for some $p$,
 then the $n$-th rational cohomology of a 
 quadric hypersurface in $\mb{P}^n$ is isomorphic to $\mb{Q}$. This implies the natural 
 restriction morphism from $H^{2p}(E)$ to $H^{2p}(E \cap \wt{X})$ is surjective. In this case, 
 taking $X_1:= \wt{X}$, \eqref{eq:cond} is satisfied.
\end{exa}

A natural conjecture arises from our observations:

\begin{conj}
 Let $X$ be a singular projective variety, $\phi: \wt{X} \to X$ be any resolution of 
 singularities and $E$ be the exceptional divisor. Let $p$ be an integer such that 
 $\dim(X_{\mr{sing}}) \le p-1$. We then have an exact sequence on cohomology
 (see \cite[Corollary-Definition $5.37$]{pet}):
 \[ H^{2p}(X) \to H^{2p}(\wt{X}) \to H^{2p}(E)\]
  We conjecture that taking 
  algebraic cohomology groups preserves the exactness of the sequence
  i.e., the following sequence is exact:
 \[ H^{2p}_A(X) \to  H^{2p}_A(\wt{X}) \to H^{2p}_A(E).\]
\end{conj}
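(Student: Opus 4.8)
The plan is to prove the conjecture by reducing the exactness of the algebraic sequence to the exactness of the given cohomological sequence, and then controlling when a cohomological lift can be realized by an algebraic cycle. Write $\phi^\ast\colon H^{2p}(X)\to H^{2p}(\wt{X})$ and $j^\ast\colon H^{2p}(\wt{X})\to H^{2p}(E)$ for the two maps, where $j\colon E\hookrightarrow\wt{X}$ is the inclusion of the exceptional divisor. Since the Bloch--Gillet--Soul\'e cycle class maps are functorial (and the operational-Chow pullbacks of Theorem \ref{th:chow} carry algebraic classes to algebraic classes), both $\phi^\ast$ and $j^\ast$ restrict to the algebraic subgroups, so the maps $H^{2p}_A(X)\to H^{2p}_A(\wt{X})\to H^{2p}_A(E)$ are well defined. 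The composition vanishes automatically, being the restriction of $j^\ast\circ\phi^\ast$, which is already zero by exactness of the cohomological sequence. Hence the entire content is exactness in the middle: I must show that every $\alpha\in H^{2p}_A(\wt{X})$ with $j^\ast\alpha=0$ in $H^{2p}_A(E)$ comes from $H^{2p}_A(X)$.

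The second step is to lift such an $\alpha$ to a Hodge class on $X$. First I would promote the vanishing in $H^{2p}_A(E)$ to vanishing in $H^{2p}(E)$: since $\wt{X}$ is smooth projective, $H^{2p}(\wt{X})$ is pure of weight $2p$, so by strictness of morphisms of mixed Hodge structures $j^\ast$ embeds $H^{2p}(\wt{X})$ into $\mr{Gr}^W_{2p}H^{2p}(E)$; thus $j^\ast\alpha=0$ in $H^{2p}(E)$ as soon as its class in $\mr{Gr}^W_{2p}$ vanishes. Exactness of the cohomological sequence then produces $\beta'\in H^{2p}(X,\mb{Q})$ with $\phi^\ast\beta'=\alpha$, and, again by strictness ($\phi^\ast$ maps into a pure weight-$2p$ space, hence kills $W_{2p-1}$ and factors through $\mr{Gr}^W_{2p}$), a class $\beta\in\mr{Gr}^W_{2p}H^{2p}(X,\mb{Q})$. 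Now $S:=\ov{\phi}^{\,\ast}\!\left(\mr{Gr}^W_{2p}H^{2p}(X,\mb{Q})\right)\subseteq H^{2p}(\wt{X},\mb{Q})$ is a polarizable pure Hodge structure and $\alpha$ is a Hodge class of $S$; since the category of polarizable pure Hodge structures is semisimple, the surjection $\mr{Gr}^W_{2p}H^{2p}(X,\mb{Q})\twoheadrightarrow S$ splits, and therefore $\beta$ may be chosen to be a Hodge class, i.e. $\beta\in H^{2p}_{\mr{Hdg}}(X)$.

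The final step is the genuinely geometric one: upgrade the Hodge class $\beta$ to an algebraic class. If $X$ satisfies $\mr{SHC}(p,n)$, then $H^{2p}_{\mr{Hdg}}(X)=H^{2p}_A(X)$, so $\beta\in H^{2p}_A(X)$ maps to $\alpha$, establishing exactness and hence the implication $\mr{SHC}(p,n)\Rightarrow$ the conjecture. In particular, when $X$ is MT-smoothable of weight $p$ and $\mr{HC}(p-1,n-1)$ holds, Theorem \ref{th01} supplies $\mr{SHC}(p,n)$, so the conjecture holds for this class of varieties. I expect this last step to be the main obstacle: the first two steps produce, for free, a \emph{Hodge} lift $\beta$, but nothing in the resolution geometry forces $\beta$ to be \emph{algebraic}. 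Bridging ``Hodge'' to ``algebraic'' is precisely the (singular) Hodge conjecture for $X$, which is why an unconditional proof is out of reach and the statement is posed as a conjecture; the most one can expect to prove unconditionally is the converse packaging, namely that the conjecture, together with MT-smoothability and $\mr{HC}(p-1,n-1)$, is \emph{equivalent} to $\mr{SHC}(p,n)$.
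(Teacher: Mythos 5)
Your proposal is, in substance, the paper's own treatment of this statement. Since the statement is a conjecture, the paper never proves it outright; its actual content is Theorem \ref{th04}, whose first half is exactly your conditional argument: $\mr{SHC}(p,n)$ implies exactness of the algebraic row. Your chase is the same as the paper's diagram \eqref{eq:diag01}: functoriality of the Bloch--Gillet--Soul\'e cycle class map gives the vertical arrows, $\mr{SHC}(p,n)$ makes the left one an isomorphism, and exactness of the Hodge-class row produces the lift. Where you add value is in actually justifying that exactness, which the paper only asserts (``the bottom row is exact''): your combination of strictness (the image of $j^*$ meets $W_{2p-1}H^{2p}(E)$ trivially, so vanishing in $\mr{Gr}^W_{2p}$ forces vanishing in $H^{2p}(E)$) with semisimplicity of polarizable pure Hodge structures, to split $\mr{Gr}^W_{2p}H^{2p}(X,\mb{Q})\twoheadrightarrow S$ and choose a Hodge-class preimage $\beta$, is the right way to fill that in. One wording slip: $j^*$ does not ``embed'' $H^{2p}(\wt{X})$ into $\mr{Gr}^W_{2p}H^{2p}(E)$ --- it is not injective in general; what strictness gives, and what you actually use, is that $\ker(j^*)$ coincides with the kernel of the composite $H^{2p}(\wt{X})\to \mr{Gr}^W_{2p}H^{2p}(E)$.

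There is, however, one genuine error to excise: the sentence claiming that MT-smoothability of weight $p$ together with $\mr{HC}(p-1,n-1)$ yields $\mr{SHC}(p,n)$ ``by Theorem \ref{th01}''. Theorem \ref{th01} applies only to simple normal crossings varieties, and an MT-smoothable $X$ with $\dim(X_{\mr{sing}})\le p-1$ need not be SNC. This is precisely why the converse half of Theorem \ref{th04} takes Conjecture A as a \emph{hypothesis}: the paper passes to a semistable reduction, applies Theorem \ref{th01} to its SNC central fiber $\mc{X}_0$ (not to $X$ itself), and then needs Conjecture A to transfer algebraicity from $\mc{X}_0$ back to $X$. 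Your final sentence states this equivalence correctly, but it flatly contradicts the earlier ``in particular'' claim; were that claim true, Conjecture A would be an unconditional theorem for all MT-smoothable varieties, which is not what the paper (or your argument) establishes.
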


We now observe that this conjecture is closely related to the singular Hodge conjecture 
(which is equivalent to the Hodge conjecture).

\begin{thm}\label{th04}
Let $X$ be a singular projective variety of dimension $n$ 
and $p$ be an integer such that $\dim(X_{\mr{sing}}) \le p-1$.
If $X$ satisfies $\mr{SHC}(p,n)$, then $X$ satisfies Conjecture A. 
Conversely, if $\mr{HC}(p-1,n-1)$ holds true, 
$X$ is MT-smoothable of weight $p$ and satisfies Conjecture A, then $X$ 
satisfies $\mr{SHC}(p,n)$.
\end{thm}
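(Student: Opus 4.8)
The plan is to prove the two implications separately, leveraging the resolution exact sequence and the machinery developed for $\mathrm{MT}$-smoothable varieties.

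For the forward implication, suppose $X$ satisfies $\mathrm{SHC}(p,n)$, so $H^{2p}_A(X) \cong H^{2p}_{\mathrm{Hdg}}(X)$. I would apply the functoriality of the cycle class map to the exact sequence $H^{2p}(X) \to H^{2p}(\wt X) \to H^{2p}(E)$ from \cite[Corollary-Definition $5.37$]{pet}, which is a sequence of (mixed) Hodge structures. Since $\wt X$ and $E$ are smooth, their algebraic cohomology groups sit inside the corresponding Hodge classes, and the Hodge-level sequence restricts to a complex on algebraic classes. The key point is exactness in the middle: given $\alpha \in H^{2p}_A(\wt X)$ mapping to zero in $H^{2p}_A(E)$, I must lift it to $H^{2p}_A(X)$. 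Exactness of the full Hodge sequence gives a lift $\beta \in H^{2p}(X)$, and since $\alpha$ is a Hodge class and the maps are morphisms of Hodge structures, $\beta$ can be chosen in $H^{2p}_{\mathrm{Hdg}}(X)$; the hypothesis $\mathrm{SHC}(p,n)$ then upgrades $\beta$ to an algebraic class. This direction is essentially a diagram chase and should be the easier half.

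For the converse, assume $\mathrm{HC}(p-1,n-1)$ holds, $X$ is $\mathrm{MT}$-smoothable of weight $p$, and Conjecture A holds. My goal is to show $H^{2p}_A(X) \cong H^{2p}_{\mathrm{Hdg}}(X)$. The natural strategy is to reduce to the smooth resolution $\wt X$, where $\mathrm{HC}$ is available via the $\mathrm{MT}$-smoothing argument of Theorem \ref{th01}, and then use Conjecture A to descend algebraic classes back to $X$. Concretely, I would take a Hodge class $\eta \in H^{2p}_{\mathrm{Hdg}}(X)$, push it to $H^{2p}_{\mathrm{Hdg}}(\wt X)$, and argue that its image is algebraic on $\wt X$. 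The $\mathrm{MT}$-smoothability hypothesis, combined with Proposition \ref{prop04} (which matches limit algebraic classes with monodromy-invariant limit Hodge classes for $\mathrm{MT}$ families), is what lets me conclude that the relevant Hodge classes on the smooth nearby fibers — and hence on $\wt X$ after the specialization analysis of Theorem \ref{th01} — are algebraic. Once the image of $\eta$ in $H^{2p}_A(\wt X)$ dies in $H^{2p}_A(E)$ (which follows because $\eta$ already comes from $X$ and the composite $H^{2p}(X) \to H^{2p}(\wt X) \to H^{2p}(E)$ is zero), Conjecture A produces a preimage in $H^{2p}_A(X)$, and one checks its cycle class recovers $\eta$.

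The main obstacle will be the converse direction, specifically controlling the interaction between the resolution sequence and the $\mathrm{MT}$-smoothing. The subtlety is that the $\mathrm{MT}$-family and its limit mixed Hodge structure live on a degeneration of $X$, whereas Conjecture A concerns the resolution $\wt X \to X$; these are a priori different geometric inputs, and I expect the hardest step to be verifying compatibility of the cycle-class-level maps across both constructions—in particular ensuring that the algebraicity obtained for the nearby smooth fiber genuinely transfers to an algebraic class on $\wt X$ whose restriction to $E$ vanishes. I would need to track the weight filtrations carefully, using that $\dim(X_{\mathrm{sing}}) \le p-1$ forces the relevant graded pieces to behave as pure Hodge structures of weight $2p$, so that the hypothesis $\mathrm{HC}(p-1,n-1)$ applied to the exceptional data $E$ (of dimension $\le n-1$) suffices to close the diagram chase.
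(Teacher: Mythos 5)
Your forward implication is essentially the paper's own proof: both compare the algebraic and Hodge-theoretic rows built from the resolution sequence and conclude by a diagram chase, using the isomorphism $H^{2p}_A(X) \cong H^{2p}_{\mr{Hdg}}(X)$ supplied by $\mr{SHC}(p,n)$. One small imprecision: you lift $\alpha$ using exactness of the full sequence $H^{2p}(X) \to H^{2p}(\wt{X}) \to H^{2p}(E)$, but the image of $\alpha$ in $H^{2p}(E)$ is only zero in $\mr{Gr}^W_{2p}$ (it could be nonzero of lower weight); the correct move, which is what the paper's diagram encodes, is to pass to the weight-$2p$ graded sequence, which stays exact by strictness of morphisms of mixed Hodge structures.

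The converse contains a genuine gap, located exactly at the step you flag as the main obstacle, and your proposed mechanism for crossing it does not work. You claim the image of $\eta \in H^{2p}_{\mr{Hdg}}(X)$ in $H^{2p}_{\mr{Hdg}}(\wt{X})$ is algebraic because ``HC is available on $\wt{X}$ via the MT-smoothing argument of Theorem \ref{th01}'' together with Proposition \ref{prop04}. But neither result applies to $\wt{X}$. The MT-smoothing smooths $X$, and after semi-stable reduction its central fiber is the simple normal crossings variety $\mc{X}_0 = \wt{X} \cup \wt{X}^c$, of which $\wt{X}$ is only one irreducible component; the specialization map and the limit mixed Hodge structure are attached to the whole fiber $\mc{X}_0$, not to $\wt{X}$. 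What Theorem \ref{th01} actually gives is $H^{2p}_A(\mc{X}_0) \cong H^{2p}_{\mr{Hdg}}(\mc{X}_0)$, together with algebraicity of those classes on $\wt{X}$ that are \emph{restricted from} $\mc{X}_0$; it does not give $\mr{HC}(p,n)$ for the smooth component $\wt{X}$ — indeed Corollary \ref{cor02} shows that this stronger conclusion requires the additional hypothesis \eqref{eq:cond}, which is not assumed here. So, as written, the algebraicity of $\eta|_{\wt{X}}$ is unjustified, and no specialization analysis applied to $\wt{X}$ alone can produce it.

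The missing idea (and the way the paper closes the hole) is to route $\eta$ through the whole degenerate fiber rather than through its component: pull $\eta$ back along the contraction $\mc{X}_0 \to X$ — a morphism of mixed Hodge structures, hence carrying Hodge classes to Hodge classes in $\mr{Gr}^W_{2p}$ — and apply Theorem \ref{th01} to $\mc{X}_0$, which is legitimate because $\mc{X}_0$ is SNC, is MT-smoothable via the given family, and its double locus consists of smooth projective $(n-1)$-folds, which satisfy $\mr{HC}(p-1,n-1)$ by hypothesis; restricting the resulting algebraic class to $\wt{X}$ then gives algebraicity of $\eta|_{\wt{X}}$. In addition, your last step (``one checks its cycle class recovers $\eta$'') silently requires injectivity of $H^{2p}_{\mr{Hdg}}(X) \to H^{2p}_{\mr{Hdg}}(\wt{X})$; this is where $\dim(X_{\mr{sing}}) \le p-1$ enters, via the cohomology sequences of the pairs $(X, X_{\mr{sing}})$ and $(\mc{X}_0, \wt{X}^c)$, which the paper uses to produce the exact sequence $0 \to H^{2p}_{\mr{Hdg}}(X) \to H^{2p}_{\mr{Hdg}}(\mc{X}_0) \to H^{2p}_{\mr{Hdg}}(\wt{X}^c)$. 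With these two repairs your class-by-class chase through Conjecture A does close; the paper itself packages the same content more efficiently as two snake-lemma comparisons of three-term exact sequences (algebraic versus Hodge) over $X$, $\mc{X}_0$ and $\wt{X}^c$, with Theorem \ref{th01} supplying the middle vertical isomorphism.
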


\begin{proof}
 If $X$ satisfies the $\mr{SHC}(p,n)$, then $H^{2p}_A(X) \cong H^{2p}_{\mr{Hdg}}(X)$. 
 Let \[\phi: \wt{X} \to X\] be a resolution of $X$ and $E$ be the exceptional divisor. 
 We then have the following commutative diagram:
 \begin{equation}\label{eq:diag01}
  \begin{diagram}
   H^{2p}_A(X)&\rTo&H^{2p}_A(\wt{X})&\rTo&H^{2p}_A(E)\\
   \dTo^{\cong}&\circlearrowleft&\dInto&\circlearrowleft&
   \dInto\\
   H^{2p}_{\mr{Hdg}}(X)&\rInto&H^{2p}_{\mr{Hdg}}(\wt{X})&\rTo&H^{2p}_{\mr{Hdg}}(E)&
     \end{diagram}
 \end{equation}
     where the bottom row is exact, injective on the left and the top row is a complex. 
     To prove Conjecture A, we need to show that the top row is exact in the middle. 
     For this, take $\gamma \in H^{2p}_A(\wt{X})$ which maps to zero in $H^{2p}_A(E)$. By diagram 
     chase it is easy to check that there exists $\gamma' \in H^{2p}_A(X)$ which maps to $\gamma$.
     In other words, the top row of \eqref{eq:diag01} is exact in the middle.
     This proves the first part of the theorem.
     
     We now assume that $X$ satisfies 
     Conjecture A. Let $\pi_0: \mc{Y} \to \Delta$ be a MT-smoothing of weight $p$ 
     of $X$. 
  By the semi-stable reduction theorem (see \cite[Chapter II]{kempmum}) 
  there exists a flat, projective family  
  $\pi: \mc{X} \to \Delta$
  which has the same fiber over $\Delta^*$ as $\pi_0$, $\mc{X}$
  is regular,
  the central fiber $\mc{X}_0$ is a reduced simple normal crossings divisor 
  with one of the irreducible components, say $\widetilde{X}$ being
  proper birational to $X$. Furthermore, 
  the complement $\wt{X}^c:=\ov{\mc{X}_0 \backslash \wt{X}}$ satisfies:
  \[ \mc{X}_0 \backslash \wt{X}^c \cong \wt{X} \backslash (\wt{X}^c \cap \wt{X}) \cong X \backslash X_{\mr{sing}}\]
  i.e., $\mc{X}$ is isomorphic to $\mc{Y}$ away from $X_{\mr{sing}}$.
  Using the Mayer-Vietoris sequence and Conjecture A we have the following commutative diagram
  of exact sequences:
  \begin{equation}\label{eq:diag02}
   \begin{diagram}
       H^{2p}_A(X)&\rTo&H^{2p}_A(\wt{X})&\rTo&H^{2p}_A(\wt{X} \cap \wt{X}^c)\\
   \dInto&\circlearrowleft&\dInto&\circlearrowleft&
   \dTo^{\cong}\\
   H^{2p}_A(\mc{X}_0)&\rTo&H^{2p}_A(\wt{X}) \oplus H^{2p}_A(\wt{X}^c)&\rTo&H^{2p}_A(\wt{X} \cap \wt{X}^c)
         \end{diagram}
  \end{equation}
where the first vertical morphism is induced by the pullback from $X$ to $\mc{X}_0$ and the second 
one is the natural inclusion. By snake lemma, this gives rise to the exact sequence:
\begin{equation}\label{eq16}
 0 \to H^{2p}_A(X) \to H^{2p}_A(\mc{X}_0) \to H^{2p}_A(\wt{X}^c)
\end{equation}
  Since $X_{\mr{sing}}$ is of 
  dimension at most $p-1$, $H^{i}(X_{\mr{sing}})=0$ for $i \ge 2p-1$.
  Then, the 
  long exact sequences in cohomology associated to the pairs 
  $(X,X_{\mr{sing}})$ and $(\mc{X}_0, \wt{X}^c)$ (see \cite[Proposition 
  $5.46$ and Corollary B.$14$]{pet})) implies
$ \mr{Gr}^W_{2p} H^{2p}_c(U) \cong \mr{Gr}^W_{2p} H^{2p}(X)$ where  $U:= X \backslash X_{\mr{sing}}$. Furthermore, 
\[0 \to \mr{Gr}^W_{2p} H^{2p}_c(U, \mb{Q}) \to \mr{Gr}^W_{2p} H^{2p}(\mc{X}_0, \mb{Q}) \to \mr{Gr}^W_{2p} H^{2p}(\wt{X}^c,\mb{Q})\]
is an exact sequence of pure Hodge structures.
This gives rise to the exact sequence:
      \begin{equation}\label{eq15}
      0 \to H^{2p}_{\mr{Hdg}}(X) \to H^{2p}_{\mr{Hdg}}(\mc{X}_0)
      \to H^{2p}_{\mr{Hdg}}(\wt{X}^c)
  \end{equation}
  of $\mb{Q}$-vector spaces. 
  Then, there is a natural morphism of exact sequences from \eqref{eq16} to \eqref{eq15}:
 \[\begin{diagram}
 0 &\rTo & H^{2p}_A(X) & \rTo & H^{2p}_A({\mc{X}}_0) &\rTo & H^{2p}_A(\wt{X}^c) \\
 & & \dInto & \circlearrowleft  &\dTo^{\cong} & \circlearrowleft &\dInto & & \\
    0 &\rTo& H^{2p}_{\mr{Hdg}}(X) &\rTo  & H^{2p}_{\mr{Hdg}}(\mc{X}_0) &\rTo & H^{2p}_{\mr{Hdg}}(\wt{X}^c)
   \end{diagram}\]
   where the isomorphism of the middle vertical arrow follows from Theorem \ref{th01}.
   Applying snake lemma once again we conclude that the first vertical morphism is surjective.
 In other words, $X$ satisfies $\mr{SHC}(p,n)$. This proves the converse and hence the theorem.
\end{proof}


 \section{Examples of Mumford-Tate families}\label{sec:exa}
 
 In \S \ref{sec:mt} we introduced Mumford-Tate families. For such families, 
 the central fiber displays interesting properties.
 For example, if the central fiber is smooth, then it is easy to check that it 
 satisfies the Hodge conjecture if a general fiber satisfies the Hodge conjecture. More generally, if the central fiber is a 
 reduced, simple normal crossings divisor, then 
 it satisfies the singular Hodge conjecture if the general fiber satisfies the Hodge conjecture (see Theorem \ref{th01}). 
 In this section we use correspondences to give a general method to produce Mumford-Tate families (see Theorem \ref{thm:mt}).
 We give examples in Corollary \ref{cor:exa}.

\subsection{Strict Mumford-Tate families}\label{sec:str}
Let $\pi_1: \mc{X}^* \to \Delta^*$ be a smooth, projective morphism over the punctured disc $\Delta^*$. 
Recall that $\pi_1$ is called a Mumford-Tate family if the pullback of every monodromy invariant Hodge class on 
$H^{2p}(\mc{X}_\infty, \mb{Q})$ to a general fiber is fixed by the associated Mumford-Tate group, for every $p$.
Here we generalize this condition to the tensor algebra of the cohomology ring $H^*(\mc{X}_\infty, \mb{Q})$.
This is a slightly stronger notion. In particular, it is possible that wedge product of two elements from odd 
degree cohomology groups become a Hodge class, although they are individually not Hodge classes. This 
is a common phenomena appearing in the cohomology of abelian varieties, for example. This will play a crucial role 
below to produce new examples of Mumford-Tate families. 

In order to study the tensor algebras more effectively, we separate  the odd 
cohomology groups  from the even ones. We take exterior algebra of the odd cohomology groups and 
the symmetric algebra of the even ones. This is done to preserve compatibility with cup-products.
Given two $r$-tuple of positive integers $\un{m}:=(m_1,...,m_r)$ and 
 $\un{k}:=(k_1,...,k_r)$, denote by 
 \begin{align*}
 & \mb{T}_{\un{m}}^{\un{k}}:= \bigwedge^{k_1} H^{m_1}(\mc{X}_{\infty}, \mb{Q}) \otimes ... \otimes \bigwedge^{k_r} H^{m_r}(\mc{X}_{\infty}, \mb{Q}),\, \mbox{ if each } m_i 
 \mbox{ is odd}, \\
 & \mb{T}_{\un{m}}^{\un{k}}:= \mr{Sym}^{k_1} H^{m_1}(\mc{X}_{\infty}, \mb{Q}) \otimes ... \otimes \mr{Sym}^{k_r} H^{m_r}(\mc{X}_{\infty}, \mb{Q}),\, \mbox{ if each } m_i 
 \mbox{ is even}.
 \end{align*}
 Given an $r$-tuple of even positive integers $\un{m}:=(m_1,...,m_r)$, an $l$-tuple of odd positive integers $\un{n}:=(n_1,...,n_l)$ and an 
 $r$ (resp. $l$) tuple of arbitrary positive integers $\un{k}:=(k_1,...,k_r)$ (resp. $\un{k}':=(k'_1,...,k'_l)$), denote by 
 \[\mb{T}^{(\un{k},\un{k}')}_{(\un{m},\un{n})} \mbox{ the pure part of } \mb{T}_{\un{m}}^{\un{k}} \otimes \mb{T}^{\un{k}'}_{\un{n}} \mbox{ i.e., }
 \mb{T}^{(\un{k},\un{k}')}_{(\un{m},\un{n})}:= \mr{Gr}^W_a \mb{T}_{\un{m}}^{\un{k}} \otimes \mb{T}^{\un{k}'}_{\un{n}},\]
 where $a:= \sum_{i=1}^r  m_ik_i+\sum_{j=1}^l n_jk'_j$. Denote by 
 \begin{equation}\label{eq:ten}
  \mb{T}_{(\un{m},\un{n})}:= \bigoplus_{(\un{k},\un{k}')} \mb{T}_{(\un{m},\un{n})}^{(\un{k},\un{k}')},
 \end{equation}
 where $\un{k}$ and $\un{k}'$ ranges over all $k$-tuple and $l$-tuple of positive integers, respectively.
Denote by
\[\mb{T}^s_{(\un{m},\un{n})} \mbox{ the same as } \mb{T}_{(\un{m},\un{n})} \mbox{ with } \mc{X}_{\infty} 
 \mbox{ replaced by } \mc{X}_s \mbox{ for any } s \in \Delta^*.\]
 Note that,  the Hodge structure on $H^m(\mc{X}_s,\mb{Q})$ is pure for all $m$, so the ``pure part'' condition is 
 redundant in this case.
Let $\mr{MT}^s_m$ be the Mumford-Tate group associated to the pure Hodge structure $H^m(\mc{X}_s,\mb{Q})$.
Then, the product of the Mumford-Tate groups
\[\mr{MT}_{(\un{m},\un{n})}^s:= \mr{MT}_{m_1}^s \times \mr{MT}_{m_2}^s \times ... \times \mr{MT}_{m_r}^s \times 
  \mr{MT}_{n_1}^s \times \mr{MT}_{n_2}^s \times ... \times \mr{MT}_{n_l}^s \]
acts on $\mb{T}_{(\un{m},\un{n})}^s$.
The family $\pi$ is called \emph{strictly Mumford-Tate with respect to} $(\un{m},\un{n})$ if for any Hodge class 
$\gamma \in \mb{T}_{(\un{m},\un{n})}$ and $s \in \mf{h}$ general, $j_s^*(\gamma)$ is fixed by $\mr{MT}_{(\un{m},\un{n})}^s$, where 
\[j_s^*: \mb{T}_{(\un{m},\un{n})} \to \mb{T}_{(\un{m},\un{n})}^s\]
is induced by the pullback of the natural inclusion of $\mc{X}_s$ inside $\mc{X}_{\infty}$.

\begin{prop}\label{prop:cur}
 Let $\pi_1: \mc{X} \to \Delta$ be a flat, projective family of genus $g$ curves for $g \ge 2$. 
 We assume that $\pi_1$ is smooth over $\Delta^*$ and the central 
 fiber is a very general irreducible nodal curve (in the sense of \cite{hodc}). 
 Then, $\pi_1$ is strictly Mumford-Tate with respect to $((0,2),(1))$.
\end{prop}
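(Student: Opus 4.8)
The plan is to reduce the assertion to a statement about Hodge classes in the exterior algebra of $H^1$ of the general fibre, and then to deduce it from maximality of the associated Mumford-Tate group. As $\mc{X}_s$ is a smooth projective curve, $H^0(\mc{X}_s,\mb{Q}) \cong \mb{Q}$ and $H^2(\mc{X}_s,\mb{Q}) \cong \mb{Q}(-1)$ are one-dimensional Tate Hodge structures; the corresponding factors $\mr{MT}^s_0$ and $\mr{MT}^s_2$ act only through the weight torus and contribute nothing beyond the bookkeeping of Tate twists. First I would decompose $\mb{T}_{((0,2),(1))}$ according to the exponents $(k_1,k_2,k')$ of the $H^0$-, $H^2$- and $H^1$-factors, and record that the interesting part of every summand is the exterior power $\bigwedge^{k'} H^1(\mc{X}_\infty)$. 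This reduces the problem to analysing the Hodge classes in $\bigwedge^{k'} H^1(\mc{X}_\infty)$ and their images under $j_s^*$.

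Next I would identify the limit mixed Hodge structure on $H^1(\mc{X}_\infty)$ for the nodal degeneration. If $\wt{C}$ denotes the normalisation of the central fibre, then $\mr{Gr}^W_1 H^1(\mc{X}_\infty) \cong H^1(\wt{C})$, while the weight-$0$ and weight-$2$ graded pieces are dual spaces of dimension the number of nodes, interchanged by the operator $N$. The hypothesis that the central fibre is a very general irreducible nodal curve in the sense of \cite{hodc} is what I would use to force the Mumford-Tate group of this structure to be as large as possible; in particular, for general $s \in \mf{h}$ the Jacobian of $\mc{X}_s$ has $\mr{MT}^s_1 = \mr{GSp}(H^1(\mc{X}_s),\phi_s)$ (Example \ref{exofmtg}(1)), so that the general fibre carries no exceptional Hodge classes.

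With maximality established, the Hodge classes in the tensor algebra are pinned down by invariant theory. By the first fundamental theorem for the symplectic group, the invariants (and semi-invariants) of $\mr{MT}^s_1$ in the exterior algebra of $H^1(\mc{X}_s)$ are generated by the polarisation $\phi_s$, so every Hodge class there is a polynomial in $\phi_s$. The factors $H^0$ and $H^2$ in the datum $((0,2),(1))$ serve precisely to record the weights and Tate twists carried by such polynomials, and the cup product $\bigwedge^2 H^1(\mc{X}_\infty) \to H^2(\mc{X}_\infty) = \mb{Q}(-1)$ being a morphism of mixed Hodge structures ensures this matching is compatible in the limit. Hence a monodromy-invariant Hodge class $\gamma \in \mb{T}_{((0,2),(1))}$ pulls back under $j_s^*$ to a polynomial in the polarisation $\phi_s$; since $\phi_s$ is part of the Hodge-theoretic datum defining $\mr{MT}^s_{((0,2),(1))}$, the class $j_s^*(\gamma)$ is fixed by $\mr{MT}^s_{((0,2),(1))}$, which is the strict Mumford-Tate condition.

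The main obstacle is the maximality asserted in the second paragraph. A single one-parameter smoothing has only cyclic monodromy, so the fullness of $\mr{MT}^s_1$ cannot be read off from the family itself and genuinely depends on the genericity of the central fibre; extracting from \cite{hodc} that a very general irreducible nodal curve admits a smoothing whose nearby fibres have Jacobian with full Mumford-Tate group --- equivalently, that the polarised limit mixed Hodge structure supports no Hodge classes beyond those generated by $\phi$ --- is the crux of the argument. The remaining points, namely that $j_s^*$ sends monodromy-invariant limit Hodge classes to genuine Hodge classes on $\mc{X}_s$ and that the Tate-twist bookkeeping of the first and third paragraphs is consistent, I expect to be routine once this input is in place.
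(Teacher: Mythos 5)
Your argument has two genuine gaps, one of logic and one of input. The logical gap: even if you grant that $\mr{MT}^s_1 = \mr{GSp}(H^1(\mc{X}_s),\phi_s)$ for general $s$, symplectic invariant theory only characterizes the Hodge classes \emph{of the fibre} $\mc{X}_s$ as polynomials in $\phi_s$. To apply this to $j_s^*(\gamma)$, where $\gamma$ is a limit Hodge class, you must already know that $j_s^*(\gamma)$ is a Hodge class on $\mc{X}_s$ --- but that is precisely the strict Mumford--Tate condition you are trying to prove, so your sentence ``Hence a monodromy-invariant Hodge class $\gamma$ pulls back under $j_s^*$ to a polynomial in $\phi_s$'' is circular. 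Example \ref{nonmtf} of the paper exists exactly to show this step can fail: a monodromy-invariant limit Hodge class need not restrict to a Hodge class on the general fibre, and nothing about the size of $\mr{MT}^s_1$ rules this out. The input gap: the ``equivalence'' you assert between fullness of $\mr{MT}^s_1$ and the limit mixed Hodge structure having no Hodge classes beyond those generated by $\phi$ is false, and what \cite[Theorem $4.3$]{hodc} actually supplies is the \emph{latter} statement, not the former. Knowing the Hodge classes in $\bigwedge^* H^1$ does not determine the Mumford--Tate group, which is cut out by Hodge tensors in all of $T^{m,n}(V)$ (Mumford's abelian fourfolds have Hodge ring generated by the theta class yet Mumford--Tate group strictly smaller than $\mr{GSp}$); and in the other direction, Noether--Lefschetz jumping shows the limit can carry Hodge classes invisible on nearby fibres. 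So both links in your chain --- genericity $\Rightarrow$ full $\mr{MT}^s_1$, and full $\mr{MT}^s_1$ $\Rightarrow$ strict Mumford--Tate --- are broken.

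The paper's proof avoids maximality and invariant theory entirely, and uses only the limit statement you relegate to a parenthetical. It passes to the family of Jacobians $\pi_2: \mbf{J} \to \Delta^*$, identifies $\bigwedge^* H^1(\mc{X}_\infty,\mb{Q}) \cong H^*(\mbf{J}_\infty,\mb{Q})$ as mixed Hodge structures, and quotes \cite[Theorem $4.3$]{hodc}: for a very general irreducible nodal central fibre, $H^*_{\mr{Hdg}}(\mbf{J}_\infty,\mb{Q}) \cong \mb{Q}[\theta]/(\theta^{g+1})$. Consequently every Hodge class in $\mb{T}_{((0,2),(1))}$ is a polynomial in $T_0$, $T_1$, $\theta$, so its pullback under $j_s^*$ is a polynomial in $T_0^s$, $T_1^s$, $\theta^s$; since $\theta^s$ is the class of the theta divisor on $\mr{Jac}(\mc{X}_s)$ --- algebraic, hence a Hodge class, hence fixed by the Mumford--Tate group --- the strict condition follows. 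Note the asymmetry that makes this work: one needs a complete description of the \emph{limit} Hodge classes, but only needs the pullbacks of the finitely many \emph{generators} to be Hodge on the fibre, which is automatic because they are algebraic. Your outline becomes correct if you delete the detour through $\mr{MT}^s_1 = \mr{GSp}$ and take as your crux the limit statement itself, which is not something to be extracted from \cite{hodc} but its literal content.
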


\begin{proof}
 Consider the family of Jacobians associated to the family of curves $\pi_1$,
 \[\pi_2: \mbf{J} \to \Delta^* \mbox{ i.e., for all } t \in \Delta^*,\, \pi_2^{-1}(t)=\mr{Jac}(\mc{X}_t).\]
 By the definition of cohomology of abelian varieties, there is a natural isomorphism of mixed Hodge structures
 between $H^1(\mc{X}_{\infty},\mb{Q})$ and $H^1(\mbf{J}_{\infty}, \mb{Q})$. This induces an isomorphism of 
 mixed Hodge structures,
 \[\bigwedge^* H^1(\mc{X}_{\infty},\mb{Q}) \xrightarrow{\sim} H^*(\mbf{J}_{\infty}, \mb{Q}).\]
 By \cite[Theorem $4.3$]{hodc}, we have 
 \[H^*_{\mr{Hdg}}(\mbf{J}_{\infty}, \mb{Q}) \cong \mb{Q}[\theta]/(\theta^{g+1}),\, \mbox{ where } g=\mr{genus}(\mc{X}_t),\, t \in \Delta^*.\]
 Note that, $\mr{Sym}^* H^0(\mc{X}_\infty,\mb{Q}) \cong \mb{Q}[T_0]$ and  $\mr{Sym}^* H^2(\mc{X}_\infty,\mb{Q}) \cong \mb{Q}[T_1]$
 where $T_0$ and $T_1$ are Hodge classes. 
 Consider the direct sum of vector spaces  $\mb{T}_{(0,2),(1)}$ as in \eqref{eq:ten}  associated to the family $\pi_1$. 
 Then, the space of Hodge classes $\mb{T}_{\mr{Hdg}}$ in $\mb{T}_{(0,2),(1)}$ is 
 isomorphic to $\mb{Q}[T_0,T_1,\theta]/(\theta^{g+1})$. Similarly, the set of Hodge class
 $\mb{T}^s_{\mr{Hdg}}$ in $\mb{T}^s_{(0,2),(1)}$
 contains  $\mb{Q}[T_0^s,T_1^s,\theta^s]/((\theta^s)^{g+1})$, where $(-)^s:=j_s^*(-)$. Hence, $T_0^{s}, T_1^s$ and 
 $\theta^s$ are fixed by the Mumford-Tate group $\mr{MT}_{(0,2),(1)}^s$. Therefore, $\pi_1$ is strictly
 Mumford-Tate with respect to $((0,2),(1))$. This proves the proposition.
\end{proof}

 \subsection{Cohomologies generated by Chern classes}\label{sec:cohgen}
 
 Let $X$, $Y$ be smooth, projective varieties of dimension $m$ and $n$, respectively.  
 Combining K\"{u}nneth decomposition with Poincare duality, we have for every $i, k \ge 0$,
 \begin{equation}\label{eq:kun}
  H^{2i-k}(X \times Y) \simeq \bigoplus_k H^{2n-k}(X)\otimes {H^{2i-k}(Y)}^{\vee} \simeq \bigoplus_{k} \Hom(H^{2m-k}(X), H^{2i-k}(Y)).
 \end{equation}
 Let $\mc{E}$ be a coherent sheaf on the fibre product $X \times Y$ and $c_i(\mc{E})$ be the $i$-th Chern class of $\mc{E}$. Denote by
 $\Phi_{\mc{E}}^{(i,k)}$
  the projection of $c_i(\mc{E})$ in $H^{2i -k}(Y)$ to the component $\Hom(H^{2m-k}(X), H^{2i-k}(Y))$. By \cite[Lemma $11.41$]{v4}, the induced morphism 
  \begin{equation}\label{eq:corr1}
    \Phi_{\mc{E}}^{(i,k)}: H^{2m-k}(X) \to H^{2i-k}(Y)\, \mbox{ is a morphism of pure Hodge structures}.
  \end{equation}

 \begin{thm}\label{thm:mix}
  Let $\pi_1: \mc{X}^{*} \to \Delta^{*}$ and $\pi_2: \mc{Y}^{*}  \to \Delta^{*}$ be two smooth, projective families of relative 
  dimensions $m$ and $n$, respectively. Assume that there exists a coherent sheaf $\mc{U}$ over $\mc{X}^{*}\times_{\Delta^{*}} \mc{Y}^{*}$ such that it is flat over $\Delta^{*}$.  Then the morphism 
  \[ \Phi_{\mc{U}_t}^{(i,k)} : H^{2m-k}(\mc{X}_t) \to H^{2i-k}(\mc{Y}_t) \]
induces a morphism of (limit) mixed Hodge structures: 
\[\Phi_{\mc{U}, \infty}^{(i,k)}: H^{2m-k}(\mc{X}_{\infty}) \to H^{2i-k}(\mc{Y}_{\infty}) .\] Furthermore, 
the morphisms $\Phi_{\mc{U}, \infty}^{(i,k)}$ and $\Phi_{\mc{U}_t}^{(i,k)}$ commute with pullback to 
closed fibers i.e., 
for any $u \in \mathfrak{h}$ with $e(u) = t$ (where $e$ is the exponential map) 
we have the following commutative diagram:
\begin{equation}\label{eq:comm1}
 \begin{diagram}
    H^{2m-k}(\mc{X}_{\infty})&\rTo^{\Phi_{\mc{U}, \infty}^{(i,k)}}&H^{2i-k}(\mc{Y}_{\infty})\\
    \dTo^{sim}^{(j_{u})^{*}}_{\cong}&\circlearrowleft&\dTo^{(j'_{u})^{*}}_{\cong}\\
    H^{2m-k}(\mc{X}_t)&\rTo^{\Phi_{\mc{U}_t}^{(i,k)}}&H^{2i-k}(\mc{Y}_t)
  \end{diagram}
\end{equation}
  where $j_u: \mc{Y}_t \hookrightarrow \mc{Y}_{\infty}$ and $j_u': \mc{X}_t \hookrightarrow \mc{X}_{\infty}$ are natural inclusions.
  \end{thm}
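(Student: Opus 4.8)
The strategy is to realize the family correspondence $\Phi^{(i,k)}_{\mc{U},\infty}$ as a pullback--cup--pushforward composite with the relative Chern class $c_i(\mc{U})$, and then to deduce the Hodge-theoretic assertion from the functoriality of the limit mixed Hodge structure under these three operations, together with the fact that $c_i(\mc{U})$ restricts to a limit Hodge class. The commutativity of \eqref{eq:comm1} will then follow from proper base change.

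First I would pass to the universal cover. Set $\mc{Z}^{*}:=\mc{X}^{*}\times_{\Delta^{*}}\mc{Y}^{*}$, a smooth projective family over $\Delta^{*}$ of relative dimension $m+n$, and let $\mc{Z}_{\infty}:=\mc{Z}^{*}\times_{\Delta^{*}}\mf{h}=\mc{X}_{\infty}\times_{\mf{h}}\mc{Y}_{\infty}$, with projections $\pr_{1}\colon\mc{Z}_{\infty}\to\mc{X}_{\infty}$ and $\pr_{2}\colon\mc{Z}_{\infty}\to\mc{Y}_{\infty}$. The sheaf $\mc{U}$ pulls back to a coherent sheaf $\mc{U}_{\infty}$ on $\mc{Z}_{\infty}$, flat over $\mf{h}$, whose Chern class $c_{i}(\mc{U}_{\infty})\in H^{2i}(\mc{Z}_{\infty},\mb{Q})$ is the pullback of the global class $c_{i}(\mc{U})$ on $\mc{Z}^{*}$. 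I then define
\[ \Phi^{(i,k)}_{\mc{U},\infty}(\alpha):=\pr_{2*}\bigl(\pr_{1}^{*}\alpha\smile c_{i}(\mc{U}_{\infty})\bigr),\qquad \alpha\in H^{2m-k}(\mc{X}_{\infty},\mb{Q}). \]
A degree count ($2m-k$, then $+2i$, then $-2m$ under $\pr_{2*}$) lands this in $H^{2i-k}(\mc{Y}_{\infty},\mb{Q})$, and by the standard identity for the action of a correspondence this is precisely the K\"unneth--Poincar\'e component of $c_{i}(\mc{U}_{\infty})$ singled out in \eqref{eq:kun}; fibrewise it recovers $\Phi^{(i,k)}_{\mc{U}_{t}}$ by construction.

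The heart of the proof is that $\Phi^{(i,k)}_{\mc{U},\infty}$ is a morphism of limit mixed Hodge structures (of type $(i-m,i-m)$, compatibly with \eqref{eq:corr1}). For this I would use that, for polarizable variations over $\Delta^{*}$, the three operations $\pr_{1}^{*}$, cup product, and the proper Gysin pushforward $\pr_{2*}\colon H^{\bullet}(\mc{Z}_{\infty})\to H^{\bullet-2m}(\mc{Y}_{\infty})(-m)$ are all morphisms of the limit mixed Hodge structure (see \cite[\S 11]{pet}); hence it suffices to check that $c_{i}(\mc{U}_{\infty})$ is a limit Hodge class, i.e.\ $c_{i}(\mc{U}_{\infty})\in F^{i}H^{2i}(\mc{Z}_{\infty},\mb{C})\cap W_{2i}H^{2i}(\mc{Z}_{\infty},\mb{Q})$ with $N\,c_{i}(\mc{U}_{\infty})=0$. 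Monodromy invariance is immediate, since $c_{i}(\mc{U}_{\infty})$ is pulled back from the base total space $\mc{Z}^{*}$ and is therefore invariant under the deck group of $e\colon\mf{h}\to\Delta^{*}$; and on each fibre $c_{i}(\mc{U}_{t})$ is of Hodge type $(i,i)$, being the Chern class of a coherent sheaf on a smooth projective variety. The argument of Proposition \ref{prop02} (the class lies in $F^{i}_{s}$ for every $s\in\mf{h}$ and is monodromy invariant, hence survives in the limit filtration $F^{i}$ and, being rational and invariant, lies in $W_{2i}$) then yields the limit Hodge class property. Being the composite of three morphisms of limit mixed Hodge structures with a limit Hodge class, $\Phi^{(i,k)}_{\mc{U},\infty}$ is a morphism of limit mixed Hodge structures; this is the asymptotic analogue of \cite[Lemma 11.41]{v4}.

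It remains to verify \eqref{eq:comm1}. Fix $u\in\mf{h}$ with $e(u)=t$, and let $\iota\colon\mc{X}_{t}\hookrightarrow\mc{X}_{\infty}$, $\iota'\colon\mc{Y}_{t}\hookrightarrow\mc{Y}_{\infty}$ and $\ell\colon\mc{Z}_{t}=\mc{X}_{t}\times\mc{Y}_{t}\hookrightarrow\mc{Z}_{\infty}$ be the fibre inclusions, which are the vertical restriction maps of \eqref{eq:comm1}; write $\pr_{1,t},\pr_{2,t}$ for the projections of $\mc{Z}_{t}$. Then $\pr_{1}\circ\ell=\iota\circ\pr_{1,t}$ and $\pr_{2}\circ\ell=\iota'\circ\pr_{2,t}$, so $\ell^{*}\pr_{1}^{*}=\pr_{1,t}^{*}\iota^{*}$; moreover $\ell^{*}$ is a ring map, hence commutes with cup products, and $\ell^{*}(c_{i}(\mc{U}_{\infty}))=c_{i}(\mc{U}_{t})$ by compatibility of Chern classes with restriction of $\mc{U}$ to the fibre. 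Finally, proper base change for the Cartesian square formed by $\ell$, $\iota'$ and the two copies of $\pr_{2}$ gives $(\iota')^{*}\pr_{2*}=\pr_{2,t*}\ell^{*}$. Chaining these identities yields $(\iota')^{*}\circ\Phi^{(i,k)}_{\mc{U},\infty}=\Phi^{(i,k)}_{\mc{U}_{t}}\circ\iota^{*}$, which is exactly \eqref{eq:comm1}. The main obstacle is the Hodge-theoretic input of the third paragraph, namely the functoriality of the limit mixed Hodge structure under proper pushforward combined with the limit Hodge class property of $c_{i}(\mc{U}_{\infty})$; once these are in place the remaining steps are formal.
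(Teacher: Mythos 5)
Your proof is correct, but it takes a genuinely different route from the paper's. The paper never leaves the level of local systems on $\Delta^*$: combining the relative K\"unneth decomposition with Poincar\'e duality for local systems, it identifies $\mb{H}^{2i}\cong\bigoplus_k\Hom(\mb{H}^{2m-k}_{\mc{X}^*},\mb{H}^{2i-k}_{\mc{Y}^*})$, regards $c_i(\mc{U})$ as a flat global section of $\mb{H}^{2i}$, and defines $\Phi^{(i,k)}_{\mc{U},\infty}$ by projecting that section and taking global sections over $\mf{h}$; the diagram \eqref{eq:comm1} then commutes by construction, with no need for your proper-base-change step. The filtration compatibilities are checked by hand: flatness (monodromy invariance) of $c_i(\mc{U})$ gives $\Phi^{(i,k)}_{\mc{U}_t}\circ T_{\mc{X}}=T_{\mc{Y}}\circ\Phi^{(i,k)}_{\mc{U}_t}$, hence, expanding the logarithm, $\Phi^{(i,k)}_{\mc{U}_t}\circ N_{\mc{X}}=N_{\mc{Y}}\circ\Phi^{(i,k)}_{\mc{U}_t}$, which yields preservation of the limit weight filtration since $W_\bullet$ is determined by $N$; preservation of $F^\bullet$ follows from the fibrewise statement \eqref{eq:corr1} together with the description of the limit Hodge filtration as a limit of the twisted fibrewise filtrations. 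You instead work upstairs on $\mc{Z}_\infty=\mc{X}_\infty\times_{\mf{h}}\mc{Y}_\infty$, write the map as $\pr_{2*}(\pr_1^*(-)\cup c_i(\mc{U}_\infty))$, and reduce everything to three functoriality statements (pullback, cup product and Gysin pushforward are morphisms of limit mixed Hodge structures) plus the fact that $c_i(\mc{U}_\infty)$ is a monodromy-invariant limit Hodge class, proved as in Proposition \ref{prop02}. Your route is more modular---it applies verbatim to any correspondence given by a monodromy-invariant limit Hodge class, not just a Chern class---but it buys this at the cost of heavier input: the pullback and Gysin functoriality of limit mixed Hodge structures is not stated in \cite[\S $11$]{pet} in the form you cite. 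Those facts are true, and the clean justification is the lemma that a morphism of polarizable variations of Hodge structure over $\Delta^*$ commutes with $N$ and with the filtrations $\mr{exp}(2\pi i sN)F^\bullet_s$, hence induces a morphism of limit mixed Hodge structures; that lemma is exactly the paper's inline argument, which your proof would invoke three times (for the cup-product step you can cite \cite[Lemma $6.16$]{fuji}, as the paper does elsewhere). Two further small points: membership of $c_i(\mc{U}_\infty)$ in $W_{2i}$ follows directly from $N c_i(\mc{U}_\infty)=0$ and the definition of the monodromy weight filtration centered at $2i$, so the invariant cycle theorem invoked in Proposition \ref{prop02} is not needed (which matters here, since $\mc{Z}^*$ is not given with an extension across $0\in\Delta$); and your K\"unneth-component identification of $\pr_{2*}(\pr_1^*(-)\cup c_i(\mc{U}_\infty))$ is what guarantees that your map restricts fibrewise to $\Phi^{(i,k)}_{\mc{U}_t}$, which is all the theorem requires, so the two constructions define the same morphism.
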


 \begin{proof}
 Consider the natural projective morphisms:
 \[\pi: \mc{X}^{*}\times_{\Delta^*}\mc{Y}^{*} \to \Delta^*,\, \pi_1: \mc{X}^* \to \Delta^*\, \mbox{ and } \pi_2: \mc{Y}^* \to \Delta^*. \]
  Consider the local system  $\mb{H}^{2i}:=R^{2i}{\pi}_{*}\mb{Z}$ over $\Delta^*$.
  We denote by \[\mb{H}^{i}_{\mc{X}^*}:=R^i{\pi_1}_{*}\mb{Z}\, \mbox{ and }\,
  \mb{H}^{i}_{\mc{Y}^*}:=R^i{\pi_2}_{*}\mb{Z}.\]
  By Künneth decomposition in families (see \cite[Ex. II.$18$]{kashi}), we have 
  \[\mb{H}^{2i} \simeq \bigoplus_{k}(\mb{H}^{k}_{\mc{X}^*} \otimes \mb{H}^{2i-k}_{\mc{Y}^*}) \]
  Applying Poincare duality to the local system $\mb{H}^{k}_{\mc{X}^*}$ (see \cite[\S I.$2.6$]{kuli}), we get:
  \[\mb{H}^{2i} \simeq \bigoplus_{k}(\mb{H}^{2m-k}_{\mc{X}^*})^{\vee} \otimes \mb{H}^{2i-k}_{\mc{Y}^*} \simeq 
  \bigoplus_k \mr{Hom}(\mb{H}^{2m-k}_{\mc{X}^*},\mb{H}^{2i-k}_{\mc{Y}^*}). \]
  For any $i$, the $i$-th Chern class $c_i(\mc{U})$ defines a global section of $\mb{H}^{2i}$. 
  Consider the projection $\phi$ of $c_i(\mc{U})$ to $\mr{Hom}(\mb{H}^{2m-k}_{\mc{X}^*},\mb{H}^{2i-k}_{\mc{Y}^*})$.
  Pulling back the morphism $\phi$ of local systems on $\Delta^{*}$ to the upper half plane $\mathfrak{h}$ 
    and taking global sections, 
    we get the morphism \[\Phi_{\mc{U}, \infty}^{(i,k)}: H^{2m-k}(\mc{X}_{\infty}) \to H^{2i-k}(\mc{Y}_{\infty}).\]
    Restricting the morphism to the fiber over $u \in \mf{h}$ gives us the morphism $\Phi_{\mc{U}_t}^{(i,k)}$, where $t:=e(u)$.
    In particular, we have commutative diagram \eqref{eq:comm1}.

  It remains to check that $\Phi_{\mc{U}, \infty}^{(i,k)}$ is a morphism of limit mixed Hodge structures.
  By \eqref{eq:corr1}, $\Phi_{\mc{U}_t}^{(i,k)}$ is a morphism of pure Hodge structures. Since the limit Hodge filtrations on 
  $\mc{X}_\infty$ and $\mc{Y}_\infty$ arise simply as a limit of these Hodge filtrations, we conclude that 
   $\Phi_{\mc{U}, \infty}^{(i,k)}$ preserves the limit Hodge filtrations. It remains to check that 
   $\Phi_{\mc{U}, \infty}^{(i,k)}$ preserves the limit weight filtration.
  Equivalently,
  using the diagram \eqref{eq:comm1}  we need to prove that $\Phi_{\mc{U}_t}^{(i,k)}$ preserves the weight filtration 
  where the weight filtration on $\mc{X}_t$ and $\mc{Y}_t$ 
  is induced by $\mc{X}_{\infty}$ and $\mc{Y}_{\infty}$, respectively (via the isomorphisms $j_u^*$ and ${j'_u}^*$, respectively).
  Recall, the weight filtration on $\mc{X}_t$ and $\mc{Y}_t$ is induced by the log of the monodromy operators 
  (see \cite[Lemma-Definition $11.9$]{pet}):
  \[N_{\mc{X}}:= \log(T_{\mc{X}})\, \mbox{ and }\, N_{\mc{Y}}:= \log(T_{\mc{Y}}).\]
  So, it suffices to check that for all $\gamma \in H^{2m-k}(\mc{X}_t)$,
  we have $\Phi_{\mc{U}_t}^{(i,k)}(N_{\mc{X}}(\gamma)) = N_{\mc{Y}}\Phi_{\mc{U}_t}^{(i,k)}(\gamma)$.
    Since $c_i(\mc{U})$ is a global section of the local system, it is monodromy invariant. This means the 
  induced morphism $\phi$ from 
  $\mb{H}^{2m-k}_{\mc{X}^*}$ to $\mb{H}^{2i-k}_{\mc{Y}^*}$ commutes with the monodromy operators i.e., 
  for every $t \in \Delta^*$, we have following commutative diagram:
  \begin{equation}\label{eq:comm2}
   \begin{diagram}
    H^{2m-k}(\mc{X}_{t})&\rTo^{\Phi_{\mc{U}_t}^{(i,k)}}&H^{2i-k} (\mc{Y}_{t})\\
    \dTo^{T_{\mc{X}}}&\circlearrowleft&\dTo^{T_{\mc{Y}}}\\
    H^{2m-k}(\mc{X}_t)&\rTo^{\Phi_{\mc{U}_t}^{(i,k)}}&H^{2i-k}(\mc{Y}_t)
  \end{diagram}
  \end{equation}
  where $T_{\mc{X}}$ and $T_{\mc{Y}}$ are the monodromy operators and $\Phi_{\mc{U}_t}^{(i,k)}$ is as in \eqref{eq:corr1}.
  This implies   for all $\gamma \in H^{2m-k}(\mc{X}_t)$, we have 
 $\Phi_{\mc{U}_t}^{(i,k)}(T_{\mc{X}}(\gamma)) = T_{\mc{Y}}\Phi_{\mc{U}_t}^{(i,k)}(\gamma)$.
  Hence, 
  \[  \Phi_{\mc{U}_t}^{(i,k)}(T_{\mc{X}}-\mr{Id})(\gamma) = \Phi_{\mc{U}_t}^{(i,k)}(T_{\mc{X}}(\gamma))-\Phi_{\mc{U}_t}^{(i,k)}(\gamma) = 
  T_{\mc{Y}}(\Phi_{\mc{U}_t}^{(i,k)}(\gamma))-\Phi_{\mc{U}_t}^{(i,k)}(\gamma)=(T_{\mc{Y}}-\mr{id}) \Phi_{\mc{U}_t}^{(i,k)}(\gamma).\]
  More generally, this implies for all $m \ge 1$,
  \[ \Phi_{\mc{U}_t}^{(i,k)}(T_{\mc{X}}-\mr{Id})^{m}(\gamma) = \Phi_{\mc{U}_t}^{(i,k)}(T_{\mc{X}}-\mr{Id})(T_{\mc{X}}-\mr{Id})^{m-1}(\gamma) = (T_{\mc{Y}}-\mr{Id})\Phi_{\mc{U}_t}^{(i,k)}(T_{\mc{X}}-\mr{Id})^{m-1}(\gamma)\]
  Therefore, by recursion we have  $\Phi_{\mc{U}_t}^{(i,k)}(T_{\mc{X}}-\mr{Id})^{m}(\gamma)= (T_{\mc{Y}}-\mr{Id})^{m}\Phi_{\mc{U}_t}^{(i,k)}(\gamma)$. Using the logarithmic expansion of $N_{\mc{X}}$ and $N_{\mc{Y}}$ we conclude:
  \[\Phi_{\mc{U}_t}^{(i,k)}(N_{\mc{X}}(\gamma)) = N_{\mc{Y}}\Phi_{\mc{U}_t}^{(i,k)}(\gamma),\, \mbox{ for all } 
  \gamma \in H^{2m-k}(\mc{X}_t).  \]
 This implies that $\Phi_{\mc{U}_t}^{(i,k)}$ preserves the limit weight filtration. This proves the theorem.
  \end{proof}

 \begin{defi}\label{defi:coh}
  Let $X$,$Y$ be smooth, projective varieties of dimensions $m$ and $n$, respectively. 
  Denote by $\mc{E}$ a coherent sheaf on $X \times_k Y$. 
  The variety $Y$ is said to be \emph{cohomologically generated by} $(X, \mc{E})$ if there is a collection 
    $S_{Y}(X,\mc{E})$ 
  of pairs of integers $(k,i)$
  such that $H^{*}(Y)$ is generated as a cohomology ring by
   the direct sum of the images of \[\Phi_{\mc{E}}^{(i,k)}: H^{2m-k}(X) \to H^{2i-k}(Y)\]
   as the pair $(k,i)$ varies over all the elements in $S_Y(X,\mc{E})$.
   Note that $\pr_1(S_{Y}(X,\mc{E}))$ need not contain all integers from $0$ to $2m$.
  We call $S_Y(X,\mc{E})$ an \emph{associated indexing set}.
  \end{defi}

  \begin{notac}\label{defi:pair}
  We fix the following notations:
  \begin{align*}
   & S_{\mr{even}} := \{(k,i) \in S_{Y}(X, \mc{E}) \ | \  k  \ \mr{even} \}\, \mbox{ and }\, 
     S_{\mr{odd}} := \{(k,i) \in S_{Y}(X, \mc{E}) \ | \  k  \ \mr{odd} \}\\
  & p(S_{\mr{even}}):= \{2m-k | (k,i) \in S_{\mr{even}}\}\, \mbox{ and }\, p(S_{\mr{odd}}):= \{2m-k | (k,i) \in S_{\mr{odd}}\}\\
  & q(S_{\mr{even}}):= \{2i-k | (k,i) \in S_{\mr{even}}\}\, \mbox{ and }\, q(S_{\mr{odd}}):= \{2i-k | (k,i) \in S_{\mr{odd}}\}
  \end{align*}
 \end{notac}

 \begin{thm}\label{thm:mt}
    Let $\pi_1: \mc{X}^{*} \to \Delta^{*}$ and $\pi_2: \mc{Y}^{*}  \to \Delta^{*}$ be two smooth, projective families
    of relative dimensions $m$ and $n$, respectively. Assume that there exists a 
    coherent sheaf $\mc{U}$ over $\mc{X}^{*}\times_{\Delta^{*}} \mc{Y}^{*}$ such that it is flat over $\Delta^{*}$
    and for general $t \in \Delta^*$,
    $\mc{Y}_t$ is cohomologically generated by $(\mc{X}_t, \mc{U}_t)$ by an indexing set $S_{\mc{Y}_t}(\mc{X}_t,\mc{U}_t)$ such that 
     $\pi_1$
    is strictly Mumford-Tate with respect to $(p(S_{\mr{even}}),p(S_{\mr{odd}}))$. Then, the family $\pi_2$ is 
    Mumford-Tate. 
 \end{thm}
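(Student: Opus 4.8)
The plan is to verify the Mumford-Tate condition for $\pi_2$ one weight at a time. Fix $p$. By the definition in \S\ref{sec:mtf} I must show that every class $\gamma \in F^pH^{2p}(\mc{Y}_\infty,\mb{C}) \cap H^{2p}(\mc{Y}_\infty,\mb{Q})$ with $N\gamma = 0$ has the property that $j_u^*(\gamma)$ is fixed by $\MT_p(\mc{Y}_t)$ for general $t = e(u)$; since $\mc{Y}_t$ is smooth, this is the same as saying $j_u^*(\gamma)$ is a genuine Hodge class on $\mc{Y}_t$. Because the image of a Hodge class under a morphism of Hodge structures is again a Hodge class, the entire argument reduces to exhibiting $j_u^*(\gamma)$ as the image, under the correspondence-plus-cup-product maps of Theorem \ref{thm:mix}, of a Hodge class living in the tensor algebra over $\mc{X}_t$ --- the place where the strict Mumford-Tate hypothesis on $\pi_1$ can be invoked.

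First I would assemble the correspondences. For each $(k,i) \in S_{\mc{Y}_t}(\mc{X}_t,\mc{U}_t)$, Theorem \ref{thm:mix} provides a morphism of limit mixed Hodge structures $\Phi_{\mc{U},\infty}^{(i,k)}\colon H^{2m-k}(\mc{X}_\infty) \to H^{2i-k}(\mc{Y}_\infty)$ commuting with $N$ and with restriction to the fibers. Taking exterior powers along the odd source degrees $p(S_{\mr{odd}})$, symmetric powers along the even source degrees $p(S_{\mr{even}})$, and cup products in the target, these assemble into a morphism of limit mixed Hodge structures
\[ \Psi_\infty\colon \mbf{A}_\infty \longrightarrow H^*(\mc{Y}_\infty), \qquad \mr{Gr}^W\mbf{A}_\infty = \mb{T}_{(p(S_{\mr{even}}),p(S_{\mr{odd}}))}, \]
where $\mbf{A}_\infty$ is the exterior/symmetric algebra on the relevant source cohomologies, together with an analogous $\Psi_t$ on the fibers. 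Since cup product and $N$ are morphisms of mixed Hodge structures, $\Psi_\infty$ commutes with $N$, and by Theorem \ref{thm:mix} it fits with $\Psi_t$ into a square involving the restriction isomorphisms, $j_u^*\Psi_\infty = \Psi_t\, j^*$. Definition \ref{defi:coh} says $\Psi_t$ is surjective onto $H^*(\mc{Y}_t)$ for general $t$; as $j^*$ and $j_u^*$ are isomorphisms, $\Psi_\infty$ is surjective as well.

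The core is a lifting step. Applying the functor $\mr{Gr}^W_{2p}$ turns $\Psi_\infty$ into a surjection of \emph{polarizable pure} Hodge structures onto $\mr{Gr}^W_{2p}H^{2p}(\mc{Y}_\infty)$, and restricting to $\ker N$ --- a sub-Hodge-structure, since $N$ is a morphism of mixed Hodge structures --- keeps this compatible with monodromy. As $\gamma$ is monodromy-invariant and, by the invariant cycle theorem (as in the proof of Proposition \ref{prop02}), lies in $F^p \cap W_{2p}$, its image $\ov{\gamma}$ in $\mr{Gr}^W_{2p}$ is a Hodge class. By semisimplicity of the category of polarizable pure Hodge structures the surjection splits through a morphism of Hodge structures, so $\ov{\gamma}$ lifts to a Hodge class $\tilde\delta$ in $\mb{T}_{(p(S_{\mr{even}}),p(S_{\mr{odd}}))}$; because this target is by construction a sum of $\mr{Gr}^W$-summands, a Hodge class in it is exactly such a lift. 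The strict Mumford-Tate hypothesis on $\pi_1$ relative to $(p(S_{\mr{even}}),p(S_{\mr{odd}}))$ now says precisely that $j^*\tilde\delta$ is fixed by $\MT^t_{(p(S_{\mr{even}}),p(S_{\mr{odd}}))}$, hence is a Hodge class in the tensor algebra over $\mc{X}_t$. Applying $\Psi_t$ and using $j_u^*\Psi_\infty = \Psi_t\, j^*$, the class $\Psi_t(j^*\tilde\delta)$ equals $j_u^*(\gamma)$ in $\mr{Gr}^W_{2p}H^{2p}(\mc{Y}_t) = H^{2p}(\mc{Y}_t)$ and, being the image of a Hodge class under the morphism of Hodge structures $\Psi_t$, is a Hodge class on $\mc{Y}_t$. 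This gives the Mumford-Tate condition in weight $p$; as $p$ is arbitrary, $\pi_2$ is Mumford-Tate.

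I expect the main obstacle to be the lifting step and its compatibility bookkeeping. The naive attempt to lift a Hodge class directly through the surjection $\Psi_\infty$ of genuinely mixed Hodge structures meets an $\mr{Ext}^1_{\mr{MHS}}$-obstruction; this is precisely why the argument is pushed onto $\mr{Gr}^W_{2p}$, where semisimplicity of polarizable pure Hodge structures forces a splitting, and it is the reason the hypothesis is phrased through the pure object $\mb{T}_{(\un{m},\un{n})}$ of \S\ref{sec:str} rather than a mixed one. The remaining care is to confirm that passing to $\mr{Gr}^W_{2p}$ and to $\ker N$ preserves surjectivity onto the summand containing $\ov{\gamma}$, and that the identity $\Psi_t(j^*\tilde\delta)=j_u^*(\gamma)$ genuinely holds on the nose in the pure cohomology $H^{2p}(\mc{Y}_t)$ after identifying it with $\mr{Gr}^W_{2p}H^{2p}(\mc{Y}_\infty)$ via $j_u^*$.
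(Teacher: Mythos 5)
Your proposal is correct and follows the same route as the paper's own proof: assemble the fiberwise correspondences of Theorem \ref{thm:mix} together with the cup product into a morphism of mixed Hodge structures $\Phi$ from the tensor algebra over $\mc{X}_{\infty}$ to $H^*(\mc{Y}_{\infty},\mb{Q})$, deduce surjectivity from cohomological generation via the restriction isomorphisms, lift the given Hodge class to a Hodge class in $\mb{T}_{\mc{X}}$, apply the strict Mumford-Tate hypothesis to its restriction to a general fiber, and push back down through $\Phi_s$. The only substantive difference is that where the paper simply asserts the existence of a Hodge-class preimage $\gamma' \in \mb{T}_{\mc{X}}$ with $\Phi(\gamma')=\gamma$, you justify it --- first using $N\gamma=0$ to place $\gamma$ in $W_{2p}$, then passing to $\mr{Gr}^W_{2p}$ and invoking semisimplicity of polarizable pure Hodge structures to split the surjection --- which supplies the argument at precisely the step the paper glosses over (and the remaining imprecision you flag, namely how classes in weight-graded pieces are restricted to fibers, is inherent in the paper's own definition of $j_s^*$ and of strictly Mumford-Tate families, not a defect of your proof relative to the paper's).
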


 \begin{proof}
 Let $t \in \Delta^*$ be such that $\mc{Y}_t$ is cohomologically generated by $(\mc{X}_t, \mc{U}_t)$ with indexing set 
 $S_{\mc{Y}_t}(\mc{X}_t,\mc{U}_t)$ such that $\pi_1$
    is strictly Mumford-Tate with respect to $(p(S_{\mr{even}}),p(S_{\mr{odd}}))$.
  Using Ehresmann's theorem one can check that for any $s \in \Delta^*$, $\mc{Y}_s$ is cohomologically generated by 
  $(\mc{X}_s, \mc{U}_s)$ and we have an equality of indexing sets 
  $S_{\mc{Y}_t}(\mc{X}_t,\mc{U}_t)=S_{\mc{Y}_s}(\mc{X}_s,\mc{U}_s)$. 
 Denote by 
 \[\mb{T}_{\mc{X}}:= \mb{T}_{(p(S_{\mr{even}}),p(S_{\mr{odd}}))} \mbox{ and }\mb{T}_{\mc{Y}}:= \mb{T}_{(q(S_{\mr{even}}),q(S_{\mr{odd}}))}\, \mbox{ with }
  \mc{X}_\infty\, \mbox{ replaced by } \mc{Y}_{\infty}. \]
Recall, for any $(k,i) \in S_{\mc{Y}_t}(\mc{X}_t,\mc{U}_t)$ we have the morphism $\Phi_{\mc{U},\infty}^{(i,k)}$ of mixed Hodge structures
from $H^{2m-k}(\mc{X}_\infty)$ to $H^{2i-k}(\mc{Y}_\infty)$. This 
  induces a morphism of mixed Hodge structures:
  \[\phi: \mb{T}_{\mc{X}} \to \mb{T}_{\mc{Y}}.\]
  Recall, the cup-product morphism is a morphism of mixed Hodge structures \cite[Lemma $6.16$]{fuji}.
  So, the composition of the cup-product morphism with $\phi$:
  \[\Phi: \mb{T}_{\mc{X}} \xrightarrow{\phi} \mb{T}_{\mc{Y}} \xrightarrow{\bigcup} H^*(\mc{Y}_\infty, \mb{Q})\]
  is a morphism of mixed Hodge structures. Given $s \in \Delta^*$, denote by (see \S \ref{sec:str})
   \[\mb{T}_{\mc{X}_s}:= \mb{T}^s_{(p(S_{\mr{even}}),p(S_{\mr{odd}}))}\, \mbox{ and }  \mb{T}_{\mc{Y}_s}:= \mb{T}^s_{(q(S_{\mr{even}}),q(S_{\mr{odd}}))}\, \mbox{ with } \mc{X}_s \mbox{ replaced by } \mc{Y}_s. \]
   As before, we have the following composed morphism of Hodge structures:
   \[\Phi_s: \mb{T}_{\mc{X}_s} \to \mb{T}_{\mc{Y}_s} \xrightarrow{\bigcup} H^*(\mc{Y}_s, \mb{Q}),\]
where the first morphism arises from $\Phi_{\mc{U}_s}^{(i,k)}$ as $(k,i)$ ranges over entries in 
$S_{\mc{Y}_s}(\mc{X}_s,\mc{U}_s)$.
  By Theorem \ref{thm:mix} we then have the following commutative diagram:
  \[\begin{diagram}
     \mb{T}_{\mc{X}}&\rTo^{\Phi}&H^*(\mc{Y}_\infty, \mb{Q})\\
     \dTo^{j_s^*}&\circlearrowleft&\dTo_{(j'_s)^*}\\
     \mb{T}_{\mc{X}_s}&\rTo^{\Phi_s}&H^*(\mc{Y}_s, \mb{Q})    
    \end{diagram}\]
where $j_s$ (resp. $j'_s$) is the natural inclusion of $\mc{X}_s$ (resp. 
$\mc{Y}_s$) into $\mc{X}_\infty$ (resp. $\mc{Y}_\infty$). 

Take $\gamma \in F^pH^{2p}(\mc{Y}_\infty, \mb{Q})$ i.e., $\gamma$ is a Hodge class.
We need to prove that ${j'_s}^*(\gamma)$ is a Hodge class in $H^{2p}(\mc{Y}_s,\mb{Q})$.
Since $\mc{Y}_s$ is cohomologically generated by $(\mc{X}_s,\mc{U}_s)$ and $\Phi$ is a morphism of mixed Hodge structures, 
there exists a Hodge class 
$\gamma' \in \mb{T}_{\mc{X}}$ such that $\Phi(\gamma')=\gamma$.
As $\pi_1$ is strictly Mumford-Tate with respect to $(p(S_{\mr{even}}),p(S_{\mr{odd}}))$, we have $j_s^*(\gamma')$ is fixed by 
$\mr{MT}_{(p(S_{\mr{even}}),p(S_{\mr{odd}}))}^s$. Hence, $j_s^*(\gamma')$ is a Hodge class in $\mb{T}_{\mc{X}_s}$. Since $\Phi_s$
is a morphism of Hodge structures, this means 
\[(j_s')^*(\gamma)=\Phi_s \circ j_s^*(\gamma') \mbox{ is a Hodge class}.\]
Therefore, $\pi_2$ is a Mumford-Tate family. This proves the theorem.
 \end{proof}
 
 We now use the above theorem to get an explicit example.
 
 \begin{cor}\label{cor:exa}
Let $\pi_1: \mc{X} \to \Delta$ be a flat, projective family of curves satisfying the hypothesis in Proposition \ref{prop:cur}.
Fix an invertible sheaf $\mc{L}$ on $\mc{X}^*:= \pi_1^{-1}(\Delta^*)$ of (relative) odd degree over the punctured disc $\Delta^*$. 
Let \[\pi_2: \mc{M}(2,\mc{L}) \to \Delta^*\]
be a relative moduli space of rank $2$ semi-stable sheaves with fixed determinant $\mc{L}$ over $\mc{X}^*$.
Then, $\pi_2$ is a Mumford-Tate family.
 \end{cor}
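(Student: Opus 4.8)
The plan is to verify the hypotheses of Theorem \ref{thm:mt} with $\pi_1$ the given family of curves (so the relative dimension is $m=1$) and $\pi_2$ the relative moduli space in the role of $\mc{Y}^* \to \Delta^*$. Since $\mc{L}$ has odd relative degree, rank and degree are coprime, so over each fibre semistability coincides with stability; hence $\pi_2: \mc{M}(2,\mc{L}) \to \Delta^*$ is a smooth projective family of relative dimension $3g-3$, and coprimality furnishes a relative universal bundle $\mc{U}$ on $\mc{X}^* \times_{\Delta^*} \mc{M}(2,\mc{L})$, flat over $\Delta^*$. This provides exactly the coherent sheaf required by Theorem \ref{thm:mt}.

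Next I would invoke the theorem of Mumford and Newstead \cite{mumn}: for every $t \in \Delta^*$, the smooth projective variety $\mc{Y}_t := \mc{M}(2,\mc{L})_t$ is cohomologically generated by $(\mc{X}_t, \mc{U}_t)$. Concretely, the K\"unneth components of $c_2(\mc{U}_t) \in H^4(\mc{X}_t \times \mc{Y}_t)$ (equivalently of $c_2(\mc{E}nd\,\mc{U}_t)$, to avoid dependence on the normalization of $\mc{U}$) furnish the Newstead generators $\alpha \in H^2(\mc{Y}_t)$, $\psi_1,\dots,\psi_{2g} \in H^3(\mc{Y}_t)$ and $\beta \in H^4(\mc{Y}_t)$ of the cohomology ring. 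Since $\mc{X}_t$ is a curve, in the notation of \S\ref{sec:cohgen} these are exactly the three maps $\Phi^{(2,2)}_{\mc{U}_t}: H^0(\mc{X}_t) \to H^2(\mc{Y}_t)$, $\Phi^{(2,1)}_{\mc{U}_t}: H^1(\mc{X}_t) \to H^3(\mc{Y}_t)$ and $\Phi^{(2,0)}_{\mc{U}_t}: H^2(\mc{X}_t) \to H^4(\mc{Y}_t)$, so an associated indexing set is $S_{\mc{Y}_t}(\mc{X}_t,\mc{U}_t) = \{(0,2),(1,2),(2,2)\}$.

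Then I would compute the tuples of Notation \ref{defi:pair} with $m=1$. The even entries $(0,2)$ and $(2,2)$ have source degrees $2m-k = 2$ and $0$, while the odd entry $(1,2)$ has source degree $2m-k = 1$; hence $p(S_{\mr{even}}) = (0,2)$ and $p(S_{\mr{odd}}) = (1)$. Proposition \ref{prop:cur} asserts precisely that $\pi_1$ is strictly Mumford-Tate with respect to $((0,2),(1))$, which is exactly the pair $(p(S_{\mr{even}}),p(S_{\mr{odd}}))$. All hypotheses of Theorem \ref{thm:mt} are thereby satisfied, and it yields that $\pi_2$ is a Mumford-Tate family.

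The essential point, and the only step carrying content beyond routine bookkeeping, is the alignment of indices in the two preceding paragraphs: one must check that the ring generators of $H^*(\mc{Y}_t)$ are produced by the correspondence maps attached to $c_2(\mc{U}_t)$ in the precise form of Definition \ref{defi:coh}, and that their K\"unneth bidegrees split into the even sources $H^0(\mc{X}_t), H^2(\mc{X}_t)$ and the single odd source $H^1(\mc{X}_t)$, reproducing the tuple $((0,2),(1))$ of Proposition \ref{prop:cur}. The remaining ingredients are standard: existence, smoothness and properness of the relative moduli space, existence of the universal bundle from oddness of the degree, and the constancy of the generation statement as $t$ varies (which is handled inside Theorem \ref{thm:mt} via Ehresmann's theorem).
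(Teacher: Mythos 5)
Your proof is correct and follows essentially the same route as the paper's: relative universal bundle (guaranteed by coprimality of rank and odd degree), Newstead's generation theorem for the fibres, identification of the resulting tuple $(p(S_{\mr{even}}),p(S_{\mr{odd}}))=((0,2),(1))$ with the one in Proposition \ref{prop:cur}, and then Theorem \ref{thm:mt}. The only cosmetic difference is that the paper's indexing set is $\{(0,1),(0,2),(1,2),(2,2)\}$, i.e.\ it also keeps the K\"{u}nneth component of $c_1(\mc{U}_t)$, which makes the generation claim immediate and normalization-independent (the issue you instead address by passing to $\mc{E}nd\,\mc{U}_t$); since the extra pair $(0,1)$ contributes the source degree $2m-k=2$ already present in $p(S_{\mr{even}})$, both indexing sets yield the same tuple and the application of Theorem \ref{thm:mt} is unaffected.
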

 
 \begin{proof}
  Consider the universal bundle $\mc{U}$ over $\mc{X}^* \times_{\Delta^*} \mc{M}(2,\mc{L})$. It is well-known 
  that for each $t \in \Delta^*$, the fiber $\mc{M}(2,\mc{L})_t:= \pi_2^{-1}(t)$ is cohomologically generated by 
  $(\mc{X}_t,\mc{U}_t)$ with the associated indexing set (see \cite[Theorem $1$]{new1}):
  \[ \{(0,1),(0,2),(1,2),(2,2)\}\]
  By Proposition \ref{prop:cur}, $\pi_1$ is strictly Mumford-Tate. Then, Theorem \ref{thm:mt} implies that $\pi_2$
  is a Mumford-Tate family. This proves the corollary.
 \end{proof}

 \begin{rem}
  In fact, the relative moduli space $\mc{M}(2,\mc{L})$ mentioned in Corollary \ref{cor:exa}
  degenerates to a singular variety. A desingularization
  of this variety satisfies the classical Hodge conjecture. See \cite[Theorem $5.2$]{hodc} for details.
 \end{rem}

 \section*{Acknowledgements} 
This article was motivated by some questions asked by
Prof. C. Simpson, after the second author gave a talk on the article \cite{hodc} at the workshop `Moduli of bundles and related structures' held at ICTS, Bengaluru, India. We thank Prof. Simpson for his interest and the organisers for organising the workshop. 
We also thank Prof. R. Laterveer for his comments on an earlier draft.

\end{document}